\newcommand*{\RR}{\mathbb R}
\newcommand*{\ee}{\mathrm e}
\newcommand*{\dd}{\mathrm d}
\DeclareMathOperator{\st}{s.t.}
\DeclareMathOperator{\rank}{rank}
\DeclareMathOperator{\dist}{dist}
\DeclareMathOperator{\tr}{tr}
\DeclareMathOperator{\vect}{vec}
\DeclareMathOperator{\mat}{mat}
\DeclareMathOperator{\RIP}{RIP}
\DeclarePairedDelimiter{\abs}{\lvert}{\rvert}
\DeclarePairedDelimiter{\norm}{\lVert}{\rVert}
\DeclareMathOperator{\OPT}{OPT}
\DeclareMathOperator{\LMI}{LMI}
\newtheorem{theorem}{Theorem}
\newtheorem{lemma}[theorem]{Lemma}
\theoremstyle{definition}
\newtheorem{definition}{Definition}
\newtheorem{assumption}{Assumption}
\title{Local and Global Linear Convergence of General Low-rank Matrix Recovery Problems}
\author {
Yingjie Bi,\textsuperscript{\rm 1}
Haixiang Zhang, \textsuperscript{\rm 2}
Javad Lavaei \textsuperscript{\rm 1}
}
\begin{document}
\maketitle

\begin{abstract}
We study the convergence rate of gradient-based local search methods for solving low-rank matrix recovery problems with general objectives in both symmetric and asymmetric cases, under the assumption of the restricted isometry property. First, we develop a new technique to verify the Polyak--{\L}ojasiewicz inequality in a neighborhood of the global minimizers, which leads to a local linear convergence region for the gradient descent method. Second, based on the local convergence result and a sharp strict saddle property proven in this paper, we present two new conditions that guarantee the global linear convergence of the perturbed gradient descent method. The developed local and global convergence results provide much stronger theoretical guarantees than the existing results. As a by-product, this work significantly improves the existing bounds on the RIP constant required to guarantee the non-existence of spurious solutions.
\end{abstract}

\section{Introduction}\label{sec:intro}

The low-rank matrix recovery problem is to recover an unknown low-rank ground truth matrix from certain measurements. This problem has a variety of applications in machine learning, such as recommendation systems \citep{KBV2009} and motion detection \citep{ZYY2013,FS2020}, and in engineering problems, such as power system state estimation \citep{ZML2018}.

In this paper, we consider two variants of the low-rank matrix recovery problem with a general measurement model represented by an arbitrary smooth function. The first variant is the symmetric problem, in which the ground truth $M^* \in \RR^{n \times n}$ is a symmetric and positive semidefinite matrix with $\rank(M^*)=r$, and $M^*$ is a global minimizer of some loss function $f_s$. Then, $M^*$ can be recovered by solving the optimization problem:
\begin{equation}\label{eq:symrecorig}
\begin{aligned}
\min \quad & f_s(M) \\
\st \quad & \rank(M) \leq r, \\
& M \succeq 0, \: M \in \RR^{n \times n}.
\end{aligned}
\end{equation}
Note that minimizing $f_s(M)$ over positive semidefinite matrices without the rank constraint would often lead to finding a solution with the highest-rank possible rather than the rank-constrained solution $M^*$. The second variant of the low-rank matrix recovery problem to be studied is the asymmetric problem, in which $M^* \in \RR^{n \times m}$ is a possibly non-square matrix with $\rank(M^*)=r$, and it is a global minimizer of some loss function $f_a$. Similarly, $M^*$ can be recovered by solving
\begin{equation}\label{eq:asymrecorig}
\begin{aligned}
\min \quad & f_a(M) \\
\st \quad & \rank(M) \leq r, \\
& M \in \RR^{n \times m}.
\end{aligned}
\end{equation}
As a special case, the loss function $f_s$ or $f_a$ can be induced by linear measurements. In this situation, we are given a linear operator $\mathcal A:\RR^{n \times n} \to \RR^p$ or $\mathcal A:\RR^{n \times m} \to \RR^p$, where $p$ denotes the number of measurements. To recover $M^*$ from the vector $d=\mathcal A(M^*)$, the function $f_s(M)$ or $f_a(M)$ is often chosen to be
\begin{equation}\label{eq:linearobj}
\frac{1}{2}\norm{\mathcal A(M)-d}^2.
\end{equation}
Besides, there are many natural choices for the loss function, such as a nonlinear model associated with the 1-bit matrix recovery \citep{DPBW2014}.

The symmetric problem \eqref{eq:symrecorig} can be transformed into an unconstrained optimization problem by factoring $M$ as $XX^T$ with $X \in \RR^{n \times r}$, which leads to the following equivalent formulation:
\begin{equation}\label{eq:symrec}
\min_{X \in \RR^{n \times r}}f_s(XX^T).
\end{equation}
In the asymmetric case, one can similarly factor $M$ as $UV^T$ with $U \in \RR^{n \times r}$ and $V \in \RR^{m \times r}$. Note that $(UP,V(P^{-1})^T)$ gives another possible factorization of $M$ for any invertible matrix $P \in \RR^{r \times r}$. To reduce the redundancy, a regularization term is usually added to the objective function to enforce that the factorization is balanced, i.e., $U^TU=V^TV$ is satisfied \citep{TBSS2016}. Since every factorization can be converted into a balanced one by selecting an appropriate $P$, the original asymmetric problem \eqref{eq:asymrecorig} is equivalent to
\begin{equation}\label{eq:asymrec}
\min_{U \in \RR^{n \times r},V \in \RR^{m \times r}}f_a(UV^T)+\frac{\phi}{4}\norm{U^TU-V^TV}_F^2,
\end{equation}
where $\phi>0$ is an arbitrary constant.

To handle the symmetric and asymmetric problems in a unified way, we will use the same notation $X$ to denote the matrix of decision variables in both cases. In the symmetric case, $X$ is obtained from the equation $M=XX^T$. In the asymmetric case, $X$ is defined as
\[
X=\begin{bmatrix}
U \\
V
\end{bmatrix} \in \RR^{(n+m) \times r}.
\]
To rewrite the asymmetric problem \eqref{eq:asymrec} in terms of $X$, we apply the technique in \citet{TBSS2016} by defining an auxiliary function $F:\RR^{(n+m) \times (n+m)} \to \RR$ as
\begin{multline}\label{eq:asymtransfunc}
F\left(\begin{bmatrix}
N_{11} & N_{12} \\
N_{21} & N_{22}
\end{bmatrix}\right)=\frac{1}{2}(f_a(N_{12})+f_a(N_{21}^T)) \\
+\frac{\phi}{4}(\norm{N_{11}}_F^2+\norm{N_{22}}_F^2-\norm{N_{12}}_F^2-\norm{N_{21}}_F^2),
\end{multline}
in which the argument of the function $F$ is partitioned into four blocks, denoted as $N_{11} \in \RR^{n \times n}$, $N_{12} \in \RR^{n \times m}$, $N_{21} \in \RR^{m \times n}$, $N_{22} \in \RR^{m \times m}$. The problem \eqref{eq:asymrec} then reduces to
\begin{equation}\label{eq:asymtrans}
\min_{X \in \RR^{(n+m) \times r}}F(XX^T),
\end{equation}
which is a special case of the symmetric problem \eqref{eq:symrec}. Henceforth, the objective functions of the two problems will be referred as to $g_s(X)=f_s(XX^T)$ and $g_a(X)=F(XX^T)$, respectively.

The unconstrained problems \eqref{eq:symrec} and \eqref{eq:asymrec} are often solved by local search algorithms, such as the gradient descent method, due to their efficiency in handling large-scale problems. Since the objective functions $g_s(X)$ and $g_a(X)$ are nonconvex, local search methods may converge to a spurious (non-global) local minimum. To guarantee the absence of such spurious solutions, the restricted isometry property (RIP) defined below is the most common condition imposed on the functions $f_s$ and $f_a$ \citep{BNS2016,GJZ2017,ZLTW2018,ZWYG2018,ZJSL2018,ZSL2019,HLB2020,ZZ2020,BL2020-pb,ZBL2021-p,Zhang2021-p}.

\begin{definition}[\citet{RFP2010,ZLTW2018}]\label{def:rip}
A twice continuously differentiable function $f_s:\RR^{n \times n} \to \RR$ satisfies the \emph{restricted isometry property} of rank $(2r_1,2r_2)$ for a constant $\delta \in [0,1)$, denoted as $\delta$-$\RIP_{2r_1,2r_2}$, if
\[
(1-\delta)\norm{K}_F^2 \leq [\nabla^2f_s(M)](K,K) \leq (1+\delta)\norm{K}_F^2
\]
holds for all matrices $M,K \in \RR^{n \times n}$ with $\rank(M) \leq 2r_1$ and $\rank(K) \leq 2r_2$. In the case when $r_1=r_2=r$, the notation $\RIP_{2r,2r}$ will be simplified as $\RIP_{2r}$. A similar definition can be also made for the asymmetric loss function $f_a$.
\end{definition}

The state-of-the-art results on the non-existence of spurious local minima are presented in \citet{ZBL2021-p,Zhang2021-p}. \citet{ZBL2021-p} shows that the problem \eqref{eq:symrec} or \eqref{eq:asymrec} is devoid of spurious local minima if i) the associated function $f_s$ or $f_a$ satisfies the $\delta$-$\RIP_2$ property with $\delta<1/2$ in case $r=1$, ii) the function $f_s$ or $f_a$ satisfies the $\delta$-$\RIP_{2r}$ property with $\delta \leq 1/3$ in case $r>1$. \citet{Zhang2021-p} further shows that a special case of the symmetric problem \eqref{eq:symrec} does not have spurious local minima if iii) $f_s$ is in the form \eqref{eq:linearobj} given by linear measurements and satisfies the $\delta$-$\RIP_{2r}$ property with $\delta<1/2$. The absence of spurious local minima under the above conditions does not automatically imply the existence of numerical algorithms with a fast convergence to the ground truth. In this paper, we will significantly strengthen the above two results by establishing the global linear convergence for problems with an arbitrary rank $r$ and a general loss function $f_s$ or $f_a$ under the almost same RIP assumption as above, i.e., $\delta<1/2$ for the symmetric problem and $\delta<1/3$ for the asymmetric problem.

One common approach to establish fast convergence is to first show that the objective function has favorable regularity properties, such as strong convexity, in a neighborhood of the global minimizers, which guarantees that common iterative algorithms will converge to a global minimizer at least linearly if they are initialized in this neighborhood. Second, given the local convergence result, certain algorithms can be utilized to reach the above neighborhood from an arbitrary initial point. Note that randomization and stochasticity are often needed in those algorithms to avoid saddle points that are far from the ground truth, such as random initialization \citep{LSJR2016} or random perturbation during the iterations \citep{GHJY2015,JGNK2017}. In this paper, we deal with the two above-mentioned aspects for the low-rank matrix recovery problem separately.

\subsection{Notations and Conventions}

In this paper, $I_n$ denotes the identity matrix of size $n \times n$, $A \otimes B$ denotes the Kronecker product of matrices $A$ and $B$, and $A \succeq 0$ means that $A$ is a symmetric and positive semidefinite matrix. $\sigma_i(A)$ denotes the $i$-th largest singular value of the matrix $A$. $\mathbf A=\vect(A)$ is the vector obtained from stacking the columns of a matrix $A$. For a vector $\mathbf A$ of dimension $n^2$, its symmetric matricization $\mat_S(\mathbf A)$ is defined as $(A+A^T)/2$ with $A$ being the unique matrix satisfying $\mathbf A=\vect(A)$. For two matrices $A$ and $B$ of the same size, their inner product is denoted as $\langle A,B\rangle=\tr(A^TB)$ and $\norm{A}_F=\sqrt{\langle A,A\rangle}$ denotes the Frobenius norm of $A$. Given a matrix $M$ and a set $\mathcal Z$ of matrices, define
\[
\dist(X,\mathcal Z)=\min_{Z \in \mathcal Z}\norm{X-Z}_F.
\]
Moreover, $\norm{v}$ denotes the Euclidean norm of a vector $v$. The action of the Hessian $\nabla^2f(M)$ of a matrix function $f$ on any two matrices $K$ and $L$ is given by
\[
[\nabla^2f(M)](K,L)=\sum_{i,j,k,l}\frac{\partial^2f}{\partial M_{ij}\partial M_{kl}}(M)K_{ij}L_{kl}.
\]

\subsection{Summary of Main Contributions}

For the local convergence, we prove in Section~\ref{sec:local} that a regularity property named the Polyak--{\L}ojasiewicz (PL) inequality always holds in a neighborhood of the global minimizers. The PL inequality is significantly weaker than the regularity condition used in previous works to study the local convergence of the low-rank matrix recovery problem, while it still guarantees a linear convergence to the ground truth. Hence, as will be compared with the prior literature in Section~\ref{sec:related}, not only are the obtained local regularity regions remarkably larger than the existing ones, but also they require significantly weaker RIP assumptions. Specifically, if $f_s$ satisfies the $\delta$-$\RIP_{2r}$ property for an arbitrary $\delta$, we will show that there exists some constant $\mu>0$ such that the objective function $g_s$ of the symmetric problem \eqref{eq:symrec} satisfies the PL inequality
\[
\frac{1}{2}\norm{\nabla g_s(X)}_F^2 \geq \mu(g_s(X)-f_s(M^*))
\]
for all $X$ in the region
\[
\{X \in \RR^{n \times r} | \dist(X,\mathcal Z) \leq \tilde C\}
\]
with
\[
\tilde C<\sqrt{2(\sqrt 2-1)}\sqrt{1-\delta^2}\sigma_r(M^*)^{1/2}.
\]
Here, $\dist(X,\mathcal Z)$ is the Frobenius distance between the matrix $X$ and the set $\mathcal Z$ of global minimizers of the problem \eqref{eq:symrec}. A similar result will also be derived for the asymmetric problem \eqref{eq:asymrec}. Based on these results, local linear convergence can then be established. Compared with the previous results, our new results are advantageous for two reasons. First, the weaker RIP assumptions imposed by our results allow them to be applicable to a much broader class of problems, especially those problems with nonlinear measurements where the RIP constant of the loss function $f_s$ or $f_a$ varies at different points. In this case, the region in which the RIP constant is below the previous bounds may be significantly small or even empty, while the region satisfying our bounds is much larger since the radius of the region is increased by more than a constant factor. Second, when the RIP constant is large and global convergence cannot be established due to the existence of spurious solutions, the enlarged local regularity regions identified in this work can reduce the sample complexity to find the correct initial point converging to the ground truth. This has a major practical value in problems like data analytics in power systems \citep{JLSB2021} in which there is a fundamental limit to the number of measurements due to the physics of the network.

For the global convergence analysis, in Section~\ref{sec:global}, we first study the symmetric problem \eqref{eq:symrec} with an arbitrary objective and an arbitrary rank $r$ and prove that the objective $g_s$ satisfies the strict saddle property if the function $f_s$ has the $\delta$-$\RIP_{2r}$ property with $\delta<1/2$. Note that this result is sharp, because in \citet{ZJSL2018} a counterexample has been found that contains spurious local minima under $\delta=1/2$. Using the above strict saddle property and the local convergence result proven in Section~\ref{sec:local}, we show that the perturbed gradient descent method with local improvement will find an approximate solution $X$ satisfying $\norm{XX^T-M^*}_F \leq \epsilon$ in $O(\log 1/\epsilon)$ number of iterations for an arbitrary tolerance $\epsilon$. Moreover, the convergence result for symmetric problems also implies the global linear convergence for asymmetric problems under the $\delta$-$\RIP_{2r}$ condition with $\delta<1/3$.

\subsection{Assumptions}

The assumptions required in this work will be introduced below. To avoid using different notations for the symmetric and asymmetric problems, we use the universal notation $f(M)$ henceforth to denote either $f_s(M)$ or $f_a(M)$. Similarly, $M^*$ denotes the ground truth in either of the cases.

\begin{assumption}\label{ass:smooth}
The function $f$ is twice continuously differentiable. In addition, its gradient $\nabla f$ is $\rho_1$-restricted Lipschitz continuous for some constant $\rho_1$, i.e., the inequality
\[
\norm{\nabla f(M)-\nabla f(M')}_F \leq \rho_1\norm{M-M'}_F
\]
holds for all matrices $M$ and $M'$ with $\rank(M) \leq r$ and $\rank(M') \leq r$. The Hessian of the function $f$ is also $\rho_2$-restricted Lipschitz continuous for some constant $\rho_2$, i.e., the inequality
\[
\abs{[\nabla^2f(M)-\nabla^2f(M')](K,K)} \leq \rho_2\norm{M-M'}_F\norm{K}_F^2
\]
holds for all matrices $M,M',K$ with $\rank(M) \leq r$, $\rank(M') \leq r$ and $\rank(K) \leq 2r$.
\end{assumption}
\begin{assumption}\label{ass:rip}
The function $f$ satisfies the $\delta$-$\RIP_{2r}$ property. Furthermore, $\rho_1$ in Assumption~\ref{ass:smooth} is chosen to be large enough such that $\rho_1 \geq 1+2\delta$.
\end{assumption}
\begin{assumption}
The ground truth $M^*$ satisfies $\norm{M^*}_F \leq D$, and the initial point $X_0$ of the local search algorithm also satisfies $\norm{X_0X_0^T}_F \leq D$, where $D$ is a constant given by the prior knowledge (every large enough $D$ satisfies this assumption).
\end{assumption}
\begin{assumption}\label{ass:regcoeff}
In the asymmetric problem \eqref{eq:asymrec}, the coefficient $\phi$ of the regularization term is chosen to be $\phi=(1-\delta)/2$.
\end{assumption}

Note that the results of this paper still hold if the gradient and Hessian of the function $f$ are restricted Lipschitz continuous only over a bounded region. Here, for simplicity we assume that these properties hold for all low-rank matrices.

As mentioned before Definition~\ref{def:rip}, the RIP-related Assumption~\ref{ass:rip} is a widely used assumption in studying the landscape of low-rank matrix recovery problems, which is satisfied in a variety of problems, such as those for which $f$ is given by a sufficiently large number of random Gaussian linear measurements \citep{CP2011}. Moreover, in the case when the function $f$ does not satisfy the RIP assumption globally, it often satisfies RIP in a neighborhood of the global minimizers, and the theorems in this paper can still be applied to obtain local convergence results.

For the asymmetric problem, it can be verified that, by choosing the coefficient $\phi$ of the regularization term as in Assumption~\ref{ass:regcoeff}, the function $F$ in \eqref{eq:asymtrans} satisfies the $2\delta/(1+\delta)$-$\RIP_{2r}$ property after scaling (see \citet{ZBL2021-p}). Other values of $\phi$ can also lead to the RIP property on $F$, but the specific value in Assumption~\ref{ass:regcoeff} is the one minimizing the RIP constant. Furthermore, if $M^*=U^*V^{*T}$ is a balanced factorization of the ground truth $M^*$, then
\begin{equation}\label{eq:auggroundtruth}
\tilde M^*=\begin{bmatrix}
U^* \\
V^*
\end{bmatrix}\begin{bmatrix}
U^{*T} & V^{*T}
\end{bmatrix} \in \RR^{(n+m) \times (n+m)}
\end{equation}
is called the augmented ground truth, which is obviously a global minimizer of the transformed asymmetric problem \eqref{eq:asymtrans}. $\tilde M^*$ is independent of the factorization $(U^*,V^*)$, and
\[
\norm{\tilde M^*}_F=2\norm{M^*}_F \leq 2D, \quad \sigma_r(\tilde M^*)=2\sigma_r(M^*).
\]
We include the proofs of the above statements in Appendix~\ref{app:factor} for completeness. In addition, we prove in Appendix~\ref{app:factor} that the gradient and Hessian of the function $g_s$ in the symmetric problem \eqref{eq:symrec} and those of the function $g_a$ in the asymmetric problem \eqref{eq:asymrec} share the same Lipschitz property over a bounded region. Using the above observations, one can translate any results developed for symmetric problems to similar results for asymmetric problems by simply replacing $\delta$ with $2\delta/(1+\delta)$, $D$ with $2D$, and $\sigma_r(M^*)$ with $2\sigma_r(M^*)$.

\section{Related Works}\label{sec:related}

The low-rank matrix recovery problem has been investigated in numerous papers. In this section, we focus on the existing results related to the linear convergence for the factored problems \eqref{eq:symrec} and \eqref{eq:asymrec} solved by local search methods.

The major previous results on the local regularity property are summarized in Table~\ref{tab:localreg}. In this table, each number in the last column reported for the existing works denotes the radius $R$ such that their respective objective functions $g$ satisfy the $(\alpha,\beta)$-regularity condition
\[
\langle\nabla g(X),X-\mathcal P_{\mathcal Z}(X)\rangle \geq \frac{\alpha}{2}\dist(X,\mathcal Z)^2+\frac{1}{2\beta}\norm{\nabla g(X)}_F^2
\]
for all matrices $X$ with $\dist(X,\mathcal Z) \leq R$. Here, $\mathcal Z$ is the set of global minimizers, and $\mathcal P_{\mathcal Z}(X)$ is a global minimizer $Z \in \mathcal Z$ that is the closest to $X$. The $(\alpha,\beta)$-regularity condition is slightly weaker than the strong convexity condition, and it can lead to linear convergence on the same region. In Table~\ref{tab:localreg}, we do not include specialized results that are only applicable to a specific objective \citep{JGNK2017,HLZ2020-p}, or probabilistic results for randomly generated measurements \citep{ZL2015}. Moreover, \citet{LL2020,ZCG2020} used the accelerated gradient descent to obtain a faster convergence rate, but their convergence regions are even smaller than the ones based on the $(\alpha,\beta)$-regularity condition as listed in Table~\ref{tab:localreg}. Each number in the last column reported for our results refers to the radius of the region satisfying the PL inequality, which is a weaker condition than the $(\alpha,\beta)$-regularity condition while offering the same convergence rate guarantee. It can be observed that we have identified far larger regions than the existing ones under weaker RIP assumptions by replacing the $(\alpha,\beta)$-regularity condition with the PL inequality.

\begin{table*}
\centering
\renewcommand*{\arraystretch}{1.85}
\begin{tabular}{cc>{\centering\arraybackslash}m{2cm}c}
\toprule
Paper & Objective & Assumption & Radius of Local Regularity Region \\\midrule
\citet{BKS2016} & S/G & $f_s$ Convex, \newline $\delta_{2r} \leq \delta$ & $\dfrac{1}{100}\dfrac{1-\delta}{1+\delta}\dfrac{\sigma_r(M^*)}{\sigma_1(M^*)}\sigma_r(M^*)^{1/2}$ \\
\citet{TBSS2016} & S/L & $\delta_{6r} \leq 1/10$ & $\dfrac{1}{4}\sigma_r(M^*)^{1/2}$ \\
\citet{TBSS2016} & A/L & $\delta_{6r} \leq 1/25$ & $\dfrac{1}{4}\sigma_r(M^*)^{1/2}$ \\
\citet{PKCS2018} & A/G & $f_a$ Convex, \newline $\delta_{2r} \leq \delta$ & $\dfrac{\sqrt 2}{10}\sqrt{\dfrac{1-\delta}{1+\delta}}\sigma_r(M^*)^{1/2}$ \\
\citet{ZLTW2021} & A/G & $\delta_{2r,4r} \leq 1/50$ & $\sigma_r(M^*)^{1/2}$ \\\midrule
Ours & S/G & $\delta_{2r} \leq \delta$ & $0.91\sqrt{1-\delta^2}\sigma_r(M^*)^{1/2}$ \\
Ours & A/G & $\delta_{2r} \leq \delta$ & $1.29\dfrac{\sqrt{1+2\delta-3\delta^2}}{1+\delta}\sigma_r(M^*)^{1/2}$ \\\bottomrule
\end{tabular}
\caption{Previous local regularity results for the low-rank matrix recovery problems and the comparison with our results (``S'', ``A'', ``L'' and ``G'' stand for the symmetric case, asymmetric case, linear measurement and general nonlinear function).}\label{tab:localreg}
\end{table*}

Regarding the existing global convergence results for the low-rank matrix recovery problem, \citet{JGNK2017} established the global linear convergence for the symmetric problem in a very specialized case with $f_s$ being a quadratic loss function, and \citet{TBSS2016} proposed the Procrustes flow method with the global linear convergence for the linear measurement case under the assumption that the function $f_s$ satisfies the $1/10$-$\RIP_{6r}$ property for symmetric problems or the function $f_a$ satisfies the $1/25$-$\RIP_{6r}$ property for asymmetric problems under a careful initialization. \citet{ZWL2015} established the global linear convergence for asymmetric problems with linear measurements under the assumption that $f_a$ satisfies $\delta$-$\RIP_{2r}$ with $\delta \leq O(1/r)$ using alternating exact minimization over variables $U$ and $V$ in \eqref{eq:asymrec}. In addition, the strict saddle property proven in \citet{GJZ2017} leads to the global linear convergence of perturbed gradient methods for the linear measurement case under the $1/10$-$\RIP_{2r}$ assumption for symmetric problems and the $1/20$-$\RIP_{2r}$ assumption for asymmetric problems. Later, \citet{ZLTW2018} proved a weaker strict saddle property under the $1/5$-$\RIP_{2r,4r}$ assumption for symmetric problems with general objectives, while \citet{LZT2017-p} proved the same weaker property under the $1/5$-$\RIP_{2r,4r}$ assumption for asymmetric problems with general objectives and nuclear norm regularization. Our results requiring the $\delta$-$\RIP_{2r}$ property with $\delta<1/2$ for symmetric problems with general objectives and the $\delta$-$\RIP_{2r}$ property with $\delta<1/3$ for asymmetric problems with general objectives depend on significantly weaker RIP assumptions and thus can be applied to a broader class of problems, which is a major improvement over all previous results on the global linear convergence.

Besides local search methods for the factored problems, there are other approaches for tackling the low-rank matrix recovery. Earlier works such as \citet{CR2009,RFP2010} solved the original nonconvex problems based on convex relaxations. Although they can achieve good performance guarantees under the RIP assumptions, they are not suitable for large-scale problems. Other approaches for solving the low-rank matrix recovery include applying the inertial proximal gradient descent method directly to the original objective functions without factoring the decision variable $M$ \citep{DHLR2020}. However, it may converge to an arbitrary critical point, while in this paper we show that RIP-based local search methods can guarantee the global convergence to a global minimum.

\section{Local Convergence}\label{sec:local}

In this section, we present the local regularity results for problems \eqref{eq:symrec} and \eqref{eq:asymrec}, which state that the functions $g_s$ and $g_a$ satisfy the PL inequality locally, leading to local linear convergence results for the gradient descent method. The proofs are delegated to Appendix~\ref{app:localproof}.

First, we consider the symmetric problem \eqref{eq:symrec}. The development of the local PL inequality for this problem is enlightened by the high-level idea behind the proof of the absence of spurious local minima in \citet{ZSL2019,ZZ2020,BL2020-pb}. The objective is to find a function $f_s^*$ corresponding to the worst-case scenario, meaning that it satisfies the $\delta$-$\RIP_{2r}$ property with the smallest possible $\delta$ while the PL inequality is violated at a particular matrix $X$. This is achieved by designing a semidefinite program parameterized by $X$ with constraints implied by the $\delta$-$\RIP_{2r}$ property and the negation of the PL inequality. Denote the optimal value of the semidefinite program by $\delta_f^*(X)$. If a given function $f_s$ satisfies $\delta$-$\RIP_{2r}$ with $\delta<\delta_f^*(X)$ for all $X \in \RR^{n \times r}$ in a neighborhood of the global minimizers, it can be concluded that the PL inequality holds for all matrices in this neighborhood.

\begin{lemma}\label{lem:localpl}
Consider the symmetric problem \eqref{eq:symrec} and an arbitrary positive number $\tilde C$ satisfying
\begin{equation}\label{eq:plradius}
\tilde C<\sqrt{2(\sqrt 2-1)}\sqrt{1-\delta^2}\sigma_r(M^*)^{1/2}.
\end{equation}
There exists a constant $\mu>0$ such that the PL inequality
\[
\frac{1}{2}\norm{\nabla g_s(X)}_F^2 \geq \mu(g_s(X)-f_s(M^*))
\]
holds for all matrices in the region
\begin{equation}\label{eq:plneighbor}
\{X \in \RR^{n \times r} | \dist(X,\mathcal Z) \leq \tilde C\},
\end{equation}
where $\mathcal Z$ is the set of global minimizers of the problem \eqref{eq:symrec}.
\end{lemma}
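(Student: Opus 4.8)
The plan is to follow the ``worst-case SDP'' strategy outlined in the paragraph preceding the lemma. Fix an arbitrary $X$ in the region \eqref{eq:plneighbor}, let $Z := \mathcal P_{\mathcal Z}(X)$ (so that $ZZ^T = M^*$) and $e := X - Z$ with $\norm{e}_F = \dist(X,\mathcal Z) =: c \le \tilde C$. Since $Z \in \mathcal Z$ minimizes $g_s$, first-order optimality gives $\nabla f_s(M^*)Z = 0$, and also $g_s(X) - f_s(M^*) \ge 0$. Write $M := XX^T$ and $W := M - M^* = Ze^T + eZ^T + ee^T$. The key structural observation is that $\mathrm{col}(Z) + \mathrm{col}(e)$ has dimension at most $2r$, so $M$, $M^*$, $W$, $ee^T$, and every matrix $M^* + tW$, $t \in [0,1]$, on the segment joining $M^*$ to $M$ are all supported on this subspace and hence have rank at most $2r$; therefore the $\delta$-$\RIP_{2r}$ bounds on $\nabla^2 f_s$ apply to every relevant pair, and no ``$\RIP_{3r}$''-type obstruction arises. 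Using Taylor's theorem with integral remainder, one obtains $\nabla f_s(M) = \nabla f_s(M^*) + W + E$ and $g_s(X) - f_s(M^*) = \langle \nabla f_s(M^*), ee^T\rangle + \tfrac12[\mathcal H](W,W)$, where $\mathcal H$ is an averaged Hessian, hence a $\delta$-$\RIP_{2r}$ operator (so $\tfrac{1-\delta}{2}\norm{W}_F^2 \le \tfrac12[\mathcal H](W,W) \le \tfrac{1+\delta}{2}\norm{W}_F^2$), and $\norm{E}_F \le \delta\norm{W}_F$. Substituting $\nabla f_s(M^*)Z = 0$ into $\nabla f_s(M)X = \nabla f_s(M)(Z+e)$ then yields the clean identity $\tfrac12\nabla g_s(X) = \nabla f_s(M^*)e + WX + EX$.

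Next I would cast the question as a semidefinite feasibility problem, as the lemma's preamble suggests. Negating the PL inequality with a candidate constant $\mu$ and adjoining the relations above together with the admissible constraints --- $\delta$-$\RIP_{2r}$ for the (decoupled) averaged Hessians and for $\nabla^2 f_s(M^*)$, the equalities $\nabla f_s(M^*)Z = 0$ and $ZZ^T = M^*$, the inequality $g_s(X)-f_s(M^*) \ge 0$, and the second-order optimality of $M^*$ --- produces, after restricting all matrices to the fixed $\le 2r$-dimensional subspace, an SDP in the variables $\nabla f_s(M^*)$, $E$ and the Hessian operators whose data depend only on $(Z,e)$; decoupling the several Hessian operators merely enlarges the feasible set, which is harmless since we only need to certify \emph{infeasibility}. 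Let $\delta_f^*(X)$ be the infimal $\delta$ for which this SDP stays feasible as $\mu \to 0^+$; for $\delta < \delta_f^*(X)$ there is $\mu(X) > 0$ making the PL inequality hold at $X$. Rather than relying on a pointwise $\mu(X)$, I would extract from the SDP's dual an explicit nonnegative combination of the RIP and optimality inequalities that dominates the functional $\tfrac12\norm{\nabla g_s(X)}_F^2 - \mu(g_s(X)-f_s(M^*))$; this produces a uniform $\mu > 0$ directly, as an explicit function of $\delta$, $\tilde C$, $\sigma_r(M^*)$, $\sigma_1(M^*)$ and $D$, so that no separate compactness argument is required.

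The main obstacle is evaluating $\delta_f^*(X)$ --- equivalently, building the dual certificate --- \emph{sharply}. Using unitary invariance and an SVD of $e$ relative to $\mathrm{col}(Z)$, I expect the problem to collapse to a low-dimensional, ultimately scalar, optimization in which the useful term $\norm{WX}_F^2 = \tr(WM^*W) + \tr(W^3)$, with the gain $\tr(WM^*W) \ge \sigma_r(M^*)\norm{WP}_F^2$ ($P$ the orthogonal projector onto $\mathrm{col}(M^*)$) coming from the part of $W$ seen by $M^*$, is traded against the perturbation contributions $\norm{EX}_F \le \delta\norm{W}_F\norm{X}_F$, $\tr(W^3)$, and the $\nabla f_s(M^*)e$ term (whose magnitude is controlled by $c$ via the second-order optimality of $M^*$). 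The exact constant $\sqrt{2(\sqrt2-1)}$, i.e.\ the threshold $c^2 < 2(\sqrt2-1)(1-\delta^2)\sigma_r(M^*)$, should emerge as the root of the scalar quadratic governing this trade-off; obtaining the sharp constant while keeping $\mu$ strictly positive up to --- but not including --- the boundary is where the real work lies. Finally, the analogous statement for the asymmetric problem \eqref{eq:asymrec} follows from the reduction recorded after Assumption~\ref{ass:regcoeff}: substituting $\delta \mapsto 2\delta/(1+\delta)$ and $\sigma_r(M^*) \mapsto 2\sigma_r(M^*)$ turns the symmetric threshold into $1.29\,\sqrt{1+2\delta-3\delta^2}/(1+\delta)\,\sigma_r(M^*)^{1/2}$.
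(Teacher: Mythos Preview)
Your overall strategy---set up a worst-case SDP whose infeasibility certifies PL, then exhibit a dual certificate---matches the paper's. The execution, however, differs in two significant ways.

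First, a simplification you overlooked: in this paper $M^*$ is the \emph{unconstrained} global minimizer of $f_s$, so $\nabla f_s(M^*)=0$ outright, not merely $\nabla f_s(M^*)Z=0$. Hence the variable $\nabla f_s(M^*)$, the ``second-order optimality of $M^*$'' constraint, and the $\nabla f_s(M^*)e$ term all drop out. The paper's SDP (problem~\eqref{eq:deltaoptfoc}) correspondingly carries a \emph{single} matrix unknown---the averaged Hessian $\mathbf H$---subject only to $\delta$-$\RIP_{2r}$ and $\norm{\mathbf X^T\mathbf H\mathbf e}\le\mu'\norm{\mathbf e}$; the bridge between $\mathbf X^T\mathbf H\mathbf e$ and $\nabla g_s(X)$ is Lemma~\ref{lem:gradhessian}. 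Your SDP with several decoupled Hessians, an $E$-variable, and first/second-order constraints at $M^*$ is needlessly complicated.

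Second, and this is the real gap: the constant $\sqrt{2(\sqrt2-1)}$ does \emph{not} fall out of an SVD/``scalar quadratic'' trade-off of the type you sketch with $\tr(WM^*W)+\tr(W^3)$. In the paper it comes from two specific ingredients that your proposal does not mention. One is the construction (from Lemma~19 of \citet{BL2020-pb}) of a vector $y$ with $\norm{\mathbf Xy-\mathbf e}\le\norm{X-Z}_F^2$, so that the angle $\theta$ between $\mathbf e$ and $\mathbf Xy$ obeys $\sin\theta\le\norm{X-Z}_F^2/\norm{XX^T-M^*}_F$. The other is Lemma~\ref{lem:normcompare} (borrowed from \citet{BNS2016}), which after arranging $X^TZ\succeq0$ bounds that ratio by $\tilde C/\sqrt{2(\sqrt2-1)\sigma_r(M^*)}$. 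The dual certificate is then the positive/negative decomposition of $(\mathbf Xy)\mathbf e^T+\mathbf e(\mathbf Xy)^T$, yielding the value $(1-\cos\theta+q_2)/(1+\cos\theta)$ with $q_2\to0$ as $\mu'\to0$; the threshold~\eqref{eq:plradius} is precisely the condition $\cos\theta>\delta$ rewritten through Lemma~\ref{lem:normcompare}. The $2(\sqrt2-1)$ is thus a property of that lemma, not of a generic quadratic balance, and without invoking it (or an equivalent) the ``real work'' you flag is genuinely missing.
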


In the above, note that $\sigma_r(M^*)$ and thus $\tilde C$ are always positive because $M^*$ is assumed to be rank $r$. Both the $(\alpha,\beta)$-regularity condition used in the prior literature and the PL inequality deployed here guarantee a linear convergence if it is already known that the trajectory at all iterations remains within the region in which the associated condition holds. However, there is a key difference between these two conditions. The $(\alpha,\beta)$-regularity condition ensures that $\dist(X,\mathcal Z)$ is nonincreasing during the iterations under a sufficiently small step size, and thus the trajectory never leaves the local neighborhood. In contrast, the weaker PL inequality may not be able to guarantee this property. To resolve this issue, in our convergence proof we will adopt a different distance function given by $\norm{XX^T-M^*}_F$. By Taylor's formula and the definition of the $\delta$-$\RIP_{2r}$ property, we have
\begin{equation}\label{eq:globalrip}
\begin{aligned}
\frac{1-\delta}{2}\norm{M-M^*}_F^2 &\leq f_s(M)-f_s(M^*) \\
&\leq \frac{1+\delta}{2}\norm{M-M^*}_F^2,
\end{aligned}
\end{equation}
for all matrices $M \in \RR^{n \times n}$ with $\rank(M) \leq r$. Therefore, if $M,M' \in \RR^{n \times n}$ are two matrices such that $f_s(M) \leq f_s(M')$, then the inequality \eqref{eq:globalrip} implies that
\begin{equation}\label{eq:levelset}
\norm{M-M^*}_F \leq \sqrt\frac{1+\delta}{1-\delta}\norm{M'-M^*}_F.
\end{equation}
Therefore, the distance function $\norm{XX^T-M^*}_F$ is almost nonincreasing if the function value $g_s(X)$ does not increase. Combining this idea with the local PL inequality proved in Lemma~\ref{lem:localpl}, we obtain the next local convergence result.

\begin{theorem}\label{thm:localconvsym}
For the symmetric problem \eqref{eq:symrec}, the gradient descent method converges to the optimal solution linearly if the initial point $X_0$ satisfies
\[
\norm{X_0X_0^T-M^*}_F<2(\sqrt 2-1)(1-\delta)\sigma_r(M^*)
\]
and the step size $\eta$ satisfies
\[
1/\eta \geq 12\rho_1r^{1/2}\left(\sqrt\frac{1+\delta}{1-\delta}\norm{X_0X_0^T-M^*}_F+D\right).
\]
Specifically, there exists some constant $\mu>0$ (which depends on $X_0$ but not on $\eta$) such that
\begin{multline}\label{eq:convrate}
\norm{X_tX_t^T-M^*}_F \leq (1-\mu\eta)^{t/2}\sqrt\frac{1+\delta}{1-\delta}\norm{X_0X_0^T-M^*}_F, \\
\forall t \in \{0,1,\dots\},
\end{multline}
where $X_t$ denotes the output of the algorithm at iteration $t$.
\end{theorem}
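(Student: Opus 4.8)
The plan is to run gradient descent inside a sublevel set on which Lemma~\ref{lem:localpl} applies, and to measure progress through the surrogate quantity $\norm{XX^T-M^*}_F$ exactly as anticipated in the discussion preceding the theorem. Write $c_0=\norm{X_0X_0^T-M^*}_F$ and let $\mathcal S=\{X\in\RR^{n\times r}:g_s(X)\le g_s(X_0)\}$. For every $X\in\mathcal S$, inequality \eqref{eq:levelset} gives $\norm{XX^T-M^*}_F\le\sqrt{(1+\delta)/(1-\delta)}\,c_0$; hence $\norm{XX^T}_F\le R_0:=\sqrt{(1+\delta)/(1-\delta)}\,c_0+D$ and, since $XX^T$ has rank at most $r$, $\norm X_F^2\le\sqrt r\,\norm{XX^T}_F\le\sqrt r\,R_0$, so $\mathcal S$ is bounded, hence compact. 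Combining \eqref{eq:levelset} with the standard bound $\dist(X,\mathcal Z)^2\le\frac1{2(\sqrt2-1)\sigma_r(M^*)}\norm{XX^T-M^*}_F^2$ (valid because $\rank(M^*)=r$) and the hypothesis $c_0<2(\sqrt2-1)(1-\delta)\sigma_r(M^*)$ yields, for every $X\in\mathcal S$,
\[
\dist(X,\mathcal Z)\le\sqrt{\frac{1+\delta}{1-\delta}}\cdot\frac{c_0}{\sqrt{2(\sqrt2-1)\sigma_r(M^*)}}<\sqrt{2(\sqrt2-1)}\,\sqrt{1-\delta^2}\,\sigma_r(M^*)^{1/2}.
\]
Thus one can fix a constant $\tilde C$ obeying \eqref{eq:plradius} with $\mathcal S\subseteq\{X:\dist(X,\mathcal Z)\le\tilde C\}$, and Lemma~\ref{lem:localpl} then supplies a $\mu>0$ for which the PL inequality holds on all of $\mathcal S$.

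Next I would set up a descent estimate. Using Assumption~\ref{ass:smooth} (restricted Lipschitzness of $\nabla f_s$) together with the bounds $\norm{XX^T}_F\le R_0$ and $\norm X_F^2\le\sqrt r\,R_0$ valid on $\mathcal S$, one shows that $\nabla g_s$ is $L$-Lipschitz on a slight compact enlargement $\mathcal S'\supseteq\mathcal S$ with $L\le12\rho_1 r^{1/2}R_0=12\rho_1 r^{1/2}\bigl(\sqrt{(1+\delta)/(1-\delta)}\,c_0+D\bigr)\le1/\eta$. The standard quadratic-upper-bound (descent) lemma then gives, for any $X_t\in\mathcal S$ with $X_{t+1}=X_t-\eta\nabla g_s(X_t)\in\mathcal S'$,
\[
g_s(X_{t+1})\le g_s(X_t)-\frac\eta2\norm{\nabla g_s(X_t)}_F^2\le g_s(X_t).
\]
In particular $X_{t+1}\in\mathcal S$, so by induction the entire trajectory stays in $\mathcal S$, where both the descent inequality and the PL inequality are in force at every step.

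Closing the recursion is then immediate: for all $t$,
\[
g_s(X_{t+1})-f_s(M^*)\le\bigl(g_s(X_t)-f_s(M^*)\bigr)-\mu\eta\bigl(g_s(X_t)-f_s(M^*)\bigr)=(1-\mu\eta)\bigl(g_s(X_t)-f_s(M^*)\bigr),
\]
so $g_s(X_t)-f_s(M^*)\le(1-\mu\eta)^t\bigl(g_s(X_0)-f_s(M^*)\bigr)$. Applying the lower bound of \eqref{eq:globalrip} with $M=X_tX_t^T$ and the upper bound with $M=X_0X_0^T$,
\[
\frac{1-\delta}2\norm{X_tX_t^T-M^*}_F^2\le(1-\mu\eta)^t\,\frac{1+\delta}2\norm{X_0X_0^T-M^*}_F^2,
\]
and taking square roots gives \eqref{eq:convrate}; here $\mu\eta\le1$ holds automatically since a PL constant never exceeds the associated smoothness constant.

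The step I expect to be the main obstacle is producing the Lipschitz constant of $\nabla g_s$ in exactly the advertised form and making the induction rigorous: this requires controlling $\norm{\nabla f_s(XX^T)}_F$ on $\mathcal S$ (through restricted Lipschitz continuity of $\nabla f_s$ and the first-order optimality of $M^*$ for $g_s$), tracking the $X$-dependence via $\norm X_F^2\le\sqrt r\,\norm{XX^T}_F$, and checking that with $\eta\le1/L$ the whole segment $[X_t,X_{t+1}]$ stays inside the region $\mathcal S'$ on which the Lipschitz estimate was derived, so that the inductive claim ``$X_t\in\mathcal S$ for all $t$'' is genuine rather than circular. Everything else is bookkeeping with the RIP inequalities \eqref{eq:globalrip}--\eqref{eq:levelset} already recorded in the text.
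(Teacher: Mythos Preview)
Your proposal is correct and follows essentially the same route as the paper: apply Lemma~\ref{lem:normcompare} to translate the bound on $\norm{XX^T-M^*}_F$ into a bound on $\dist(X,\mathcal Z)$ so that Lemma~\ref{lem:localpl} yields a PL constant $\mu$, invoke a one-step descent estimate (the paper packages this as Lemma~\ref{lem:onestep}, exactly the ``segment-stays-in-$\mathcal S'$'' computation you flag as the main obstacle), run the induction to keep the trajectory in the PL region, and finish with the standard PL recursion together with \eqref{eq:globalrip}. The only cosmetic difference is that you organize the invariant via the sublevel set $\mathcal S=\{g_s\le g_s(X_0)\}$ while the paper tracks the equivalent condition \eqref{eq:directdist} directly; the paper also cites \citet{KNS2016} for the final recursion rather than writing it out.
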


Note that since the left-hand side of \eqref{eq:convrate} is nonnegative, we have $0 \leq 1-\mu\eta \leq 1$. As a remark, although our bound on the step size $\eta$ in Theorem~\ref{thm:localconvsym} seems complex, it essentially says that $\eta$ needs to be small, and the upper bound on the acceptable values of the step size can be explicitly calculated out routinely after all the parameters of the problem are given. Furthermore, using the transformation from asymmetric problems to symmetric problems, one can obtain parallel results for the asymmetric problem \eqref{eq:asymrec} as below.

\begin{theorem}\label{thm:localconvasym}
Consider the asymmetric problem \eqref{eq:asymrec}. The PL inequality is satisfied in the region
\[
\{X \in \RR^{(n+m) \times r} | \dist(X,\mathcal Z) \leq \tilde C\},
\]
where $\mathcal Z$ denotes the set of global minimizers and
\[
\tilde C<2\sqrt{\sqrt 2-1}\frac{\sqrt{1+2\delta-3\delta^2}}{1+\delta}\sigma_r(M^*)^{1/2}.
\]
Moreover, local linear convergence is guaranteed for the gradient descent method if the initial point $X_0$ satisfies
\[
\norm{X_0X_0^T-\tilde M^*}_F<4(\sqrt 2-1)\frac{1-\delta}{1+\delta}\sigma_r(M^*)
\]
and the step size $\eta$ satisfies
\[
1/\eta \geq 12\rho_1r^{1/2}\left(\sqrt\frac{1+3\delta}{1-\delta}\norm{X_0X_0^T-\tilde M^*}_F+2D\right).
\]
\end{theorem}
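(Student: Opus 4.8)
The plan is to derive Theorem~\ref{thm:localconvasym} entirely by reduction to the symmetric results, exploiting the dictionary established in the Assumptions section: the transformed objective $g_a(X) = F(XX^T)$ is a genuine instance of the symmetric problem \eqref{eq:symrec} with loss $F$ in place of $f_s$, and $F$ satisfies (after scaling) the $\delta'$-$\RIP_{2r}$ property with $\delta' = 2\delta/(1+\delta)$, with augmented ground truth $\tilde M^*$ playing the role of $M^*$, where $\|\tilde M^*\|_F = 2\|M^*\|_F \le 2D$ and $\sigma_r(\tilde M^*) = 2\sigma_r(M^*)$. So the first step is simply to invoke Lemma~\ref{lem:localpl} for the problem $\min_X g_a(X)$ with the substitutions $\delta \mapsto \delta'$ and $\sigma_r(M^*) \mapsto 2\sigma_r(M^*)$. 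The PL radius from \eqref{eq:plradius} becomes
\[
\tilde C < \sqrt{2(\sqrt 2 - 1)}\,\sqrt{1-\delta'^2}\,(2\sigma_r(M^*))^{1/2},
\]
and the job is to check the algebraic identity $\sqrt{2}\cdot\sqrt{1-\delta'^2} = 2\sqrt{1+2\delta-3\delta^2}/(1+\delta)$. Indeed, $1 - \delta'^2 = 1 - 4\delta^2/(1+\delta)^2 = \bigl((1+\delta)^2 - 4\delta^2\bigr)/(1+\delta)^2 = (1+3\delta)(1-\delta)/(1+\delta)^2$, so $2\sqrt{2(1-\delta'^2)} = 2\sqrt{2}\sqrt{(1+3\delta)(1-\delta)}/(1+\delta)$; combined with the factor $\sqrt{2(\sqrt2-1)}$ and the $\sqrt 2$ from $(2\sigma_r(M^*))^{1/2}$, one collects $\sqrt{2(\sqrt2-1)}\cdot\sqrt 2 \cdot \sqrt 2 \cdot\sqrt{(1+3\delta)(1-\delta)}/(1+\delta) = 2\sqrt{\sqrt2-1}\cdot\sqrt{1+2\delta-3\delta^2}/(1+\delta)$ after noting $(1+3\delta)(1-\delta) = 1 + 2\delta - 3\delta^2$. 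This reproduces exactly the stated $\tilde C$ bound for the PL region.

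Second, I would apply Theorem~\ref{thm:localconvsym} to the same transformed problem. The initialization condition $\|X_0X_0^T - M^*\|_F < 2(\sqrt2-1)(1-\delta)\sigma_r(M^*)$ becomes, under the substitutions,
\[
\|X_0X_0^T - \tilde M^*\|_F < 2(\sqrt 2 - 1)(1-\delta')\cdot 2\sigma_r(M^*) = 4(\sqrt 2 - 1)(1-\delta')\sigma_r(M^*),
\]
and since $1 - \delta' = 1 - 2\delta/(1+\delta) = (1-\delta)/(1+\delta)$, this is precisely $4(\sqrt2-1)\frac{1-\delta}{1+\delta}\sigma_r(M^*)$ as claimed. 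For the step size, the bound in Theorem~\ref{thm:localconvsym} is $1/\eta \ge 12\rho_1 r^{1/2}\bigl(\sqrt{\tfrac{1+\delta}{1-\delta}}\|X_0X_0^T-M^*\|_F + D\bigr)$; replacing $\delta$ by $\delta'$, $D$ by $2D$ (since $\|\tilde M^*\|_F \le 2D$), and $X_0X_0^T - M^*$ by $X_0X_0^T - \tilde M^*$ gives $1/\eta \ge 12\rho_1 r^{1/2}\bigl(\sqrt{\tfrac{1+\delta'}{1-\delta'}}\|X_0X_0^T-\tilde M^*\|_F + 2D\bigr)$, and the factor simplifies via $\frac{1+\delta'}{1-\delta'} = \frac{(1+\delta)+2\delta}{(1+\delta)-2\delta} = \frac{1+3\delta}{1-\delta}$, matching the theorem statement. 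One subtlety is that $\rho_1$ is the restricted-Lipschitz constant of the gradient of $f_a$, not of $F$; here I would cite the claim made in the Assumptions section (and proved in Appendix~\ref{app:factor}) that the gradient and Hessian of $g_s$ and $g_a$ share the same Lipschitz behavior over a bounded region, so the same $\rho_1$ works for the transformed problem up to the constants already absorbed into the universal bound.

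Third and last, I would transfer the conclusion: Theorem~\ref{thm:localconvsym} applied to $g_a$ yields a constant $\mu > 0$ (depending on $X_0$) with
\[
\|X_tX_t^T - \tilde M^*\|_F \le (1-\mu\eta)^{t/2}\sqrt{\tfrac{1+3\delta}{1-\delta}}\,\|X_0X_0^T - \tilde M^*\|_F
\]
for all $t$, which is exactly the asymmetric linear convergence guarantee. Since Theorem~\ref{thm:localconvasym} as stated only asserts the existence of the PL region, the initialization condition, the step-size condition, and qualitative local linear convergence, no further work is needed beyond recording these substitutions.

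The main obstacle, such as it is, is bookkeeping rather than mathematics: one must verify that the transformation of \eqref{eq:asymrec} into \eqref{eq:asymtrans} is faithful in the strong sense that gradient descent on $g_a(X) = F(XX^T)$ coincides with gradient descent on $g_a$ as originally defined from \eqref{eq:asymrec} — i.e., that the regularization term $\tfrac{\phi}{4}\|U^TU-V^TV\|_F^2$ is correctly encoded in $F$ via the block structure of \eqref{eq:asymtransfunc} — and that the RIP constant of $F$ is indeed $2\delta/(1+\delta)$ under Assumption~\ref{ass:regcoeff}, both of which are asserted in the excerpt with reference to \citet{TBSS2016,ZBL2021-p} and Appendix~\ref{app:factor}. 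Granting those, the theorem is an immediate corollary of Lemma~\ref{lem:localpl} and Theorem~\ref{thm:localconvsym} with the three substitutions $\delta \mapsto 2\delta/(1+\delta)$, $D \mapsto 2D$, $\sigma_r(M^*) \mapsto 2\sigma_r(M^*)$, and the algebraic simplifications above.
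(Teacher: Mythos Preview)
Your proposal is correct and follows exactly the approach the paper takes: the paper omits the proof of Theorem~\ref{thm:localconvasym} entirely, stating only that after the transformation from asymmetric to symmetric problems it is similar to that of Theorem~\ref{thm:localconvsym}, which is precisely the reduction you carry out via the substitutions $\delta \mapsto 2\delta/(1+\delta)$, $D \mapsto 2D$, $\sigma_r(M^*) \mapsto 2\sigma_r(M^*)$. (One small bookkeeping slip: the identity you isolate as ``$\sqrt{2}\cdot\sqrt{1-\delta'^2} = 2\sqrt{1+2\delta-3\delta^2}/(1+\delta)$'' is off by a factor of $\sqrt 2$ and you then introduce a spurious extra $\sqrt 2$ in the next line; the two errors cancel and your final expressions are all correct, but you should clean up that intermediate step.)
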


\section{Global Convergence}\label{sec:global}

Having developed local convergence results, the next step is to design an algorithm whose trajectory will eventually enter the local convergence region from any initial point. The major challenge is to deal with the saddle points outside the local regularity region. One common approach is the perturbed gradient descent method, which adds random noise to jump out of a neighborhood of a strict saddle point. Using the symmetric problem as an example, the basic idea is to first use the analysis in \citet{JGNK2017} to show that the perturbed gradient descent method will successfully find a matrix $X$ that approximately satisfies the first-order and second-order necessary optimality conditions, i.e.,
\begin{equation}\label{eq:approxoptimalitycond}
\norm{\nabla g_s(X)}_F \leq \kappa, \quad \lambda_{\min}(\nabla^2g_s(X)) \geq -\kappa,
\end{equation}
after a certain number of iterations where the number depends on $\kappa$. Here, $\lambda_{\min}(\nabla^2g_s(X))$ denotes the minimum eigenvalue of the matrix $\mathbf G$ that satisfies the equation
\[
(\vect(U))^T\mathbf G\vect(V)=[\nabla^2g_s(X)](U,V),
\]
for all $U,V \in \RR^{n \times r}$. The second step is to prove the strict saddle property for the problem, which means that for appropriate values of $\kappa$ the two conditions in \eqref{eq:approxoptimalitycond} imply that $\norm{XX^T-M^*}_F$ is so small that $X$ is in the local convergence region given by Theorem~\ref{thm:localconvsym}. After this iteration, the algorithm switches to the simple gradient descent method. This two-phase algorithm is commonly called the perturbed gradient descent method with local improvement \citep{JGNK2017}, whose details are given by Algorithm~\ref{alg:perturbed} in Appendix~\ref{app:globalproof}. The proofs in this section are also given in Appendix~\ref{app:globalproof}.

In this section, we will present two conditions that guarantee the global linear convergence of the above algorithm. For symmetric problems, the next lemma provides the strict saddle property and fulfills the purpose for the second step mentioned above. Its proof is a generalization of the one for the absence of spurious local minima under the same assumption in \citet{Zhang2021-p}.

\begin{lemma}\label{lem:strictsaddlelin}
Consider the symmetric problem \eqref{eq:symrec} with $\delta<1/2$. For every $C>0$, there exists some $\kappa>0$ such that for every $X \in \RR^{n \times r}$ the two conditions given in \eqref{eq:approxoptimalitycond} will imply $\norm{XX^T-ZZ^T}_F<C$.
\end{lemma}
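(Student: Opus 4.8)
\textbf{Proof proposal for Lemma~\ref{lem:strictsaddlelin}.}

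The plan is to argue by contradiction using a compactness/limiting argument combined with a semidefinite-programming certificate, mirroring the structure of the spurious-solution proofs in \citet{Zhang2021-p}. Suppose the conclusion fails for some $C>0$: then there is a sequence $\kappa_j \downarrow 0$ and matrices $X_j$ satisfying $\norm{\nabla g_s(X_j)}_F \leq \kappa_j$ and $\lambda_{\min}(\nabla^2 g_s(X_j)) \geq -\kappa_j$ but $\norm{X_jX_j^T - ZZ^T}_F \geq C$, where $Z$ is chosen with $ZZ^T = M^*$. First I would use \eqref{eq:globalrip} to bound the sequence: since an approximate first-order point cannot have much larger objective value than $M^*$ (one can pass to a nearby exact critical point, or directly estimate $f_s(X_jX_j^T)-f_s(M^*)$ from the small gradient together with restricted strong convexity), the matrices $X_jX_j^T$ lie in a bounded set, and hence (after passing to a subsequence) $X_j \to \bar X$ with $\nabla g_s(\bar X)=0$, $\nabla^2 g_s(\bar X) \succeq 0$, and $\norm{\bar X\bar X^T - M^*}_F \geq C$. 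So it suffices to rule out an \emph{exact} second-order critical point at Frobenius distance at least $C$ from the ground truth, i.e.\ to show the map $X \mapsto \norm{XX^T-M^*}_F$ is bounded above on the set of second-order critical points, and then take $\kappa$ small enough by the compactness argument.

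Next I would set up the SDP certificate at such a critical point $\bar X$. Write the first-order condition $\nabla g_s(\bar X)=0$ as $\nabla f_s(\bar X\bar X^T)\bar X = 0$ and the second-order condition $[\nabla^2 g_s(\bar X)](U,U)\geq 0$ for all $U$; expanding the latter gives, for every $U \in \RR^{n\times r}$,
\[
[\nabla^2 f_s(\bar X\bar X^T)](\bar X U^T + U\bar X^T, \bar X U^T + U\bar X^T) + 2\langle \nabla f_s(\bar X\bar X^T), UU^T\rangle \geq 0.
\]
The key algebraic device, as in \citet{Zhang2021-p}, is to choose a clever direction $U$ (built from $\bar X$ and a factor of $M^*$, typically involving the SVD of $\bar X^T Z$ or the ``rotation'' aligning $\bar X$ with $Z$) so that the second-order inequality, after substituting the first-order condition to eliminate $\nabla f_s$, becomes a quadratic-form inequality in the single matrix variable $E := \bar X\bar X^T - M^*$ whose coefficients are controlled purely by the $\delta$-$\RIP_{2r}$ bounds $(1-\delta)\norm{K}_F^2 \le [\nabla^2 f_s](K,K) \le (1+\delta)\norm{K}_F^2$ for $\rank(K)\le 2r$. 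Because $\bar X\bar X^T$ and $M^*$ each have rank at most $r$, their difference $E$ has rank at most $2r$, so the RIP bounds apply to $E$ and to the combinations appearing. Collecting terms, one obtains an inequality of the schematic form $a(\delta)\norm{E}_F^2 \le b(\delta)\langle E, \text{(rank-}\le 2r\text{ terms)}\rangle + \dots$ that, when $\delta<1/2$, forces $\norm{E}_F$ to be bounded by a quantity depending only on $\delta$, $r$, and $\sigma_r(M^*)$ — in fact one should be able to push it to $\norm{E}_F$ small relative to $\sigma_r(M^*)$, i.e.\ into the local region of Theorem~\ref{thm:localconvsym}. The threshold $\delta<1/2$ is exactly where the coefficient $a(\delta)$ stays positive, consistent with the $\delta=1/2$ counterexample of \citet{ZJSL2018}; relaxing the sharp RIP rank $2r$ (rather than $2r,4r$) is what requires the careful choice of $U$ and the substitution of the first-order condition rather than a cruder bound.

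Finally I would assemble the pieces: the SDP/quadratic-form argument gives a uniform bound $\norm{\bar X\bar X^T - M^*}_F \le C_0(\delta,r,\sigma_r(M^*))$ on \emph{all} exact second-order critical points, contradicting $\norm{\bar X\bar X^T-M^*}_F \ge C$ once $C > C_0$; and for the approximate version I make the $\kappa$-dependence quantitative by tracking how an $O(\kappa)$ slack in the first- and second-order conditions perturbs the quadratic-form inequality (each bracket above picks up an additive $O(\kappa)$ term times a bound on $\norm{\bar X}$, itself controlled via boundedness), so that choosing $\kappa$ small enough shrinks the violation below $C$. I expect the main obstacle to be the combinatorial/algebraic bookkeeping in step two: finding the right direction $U$ and massaging the second-order inequality — after eliminating $\nabla f_s$ via the first-order condition and applying RIP only to rank-$\le 2r$ matrices — into a clean quadratic form in $E$ with the sharp $\delta<1/2$ threshold. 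A secondary technical point is justifying the passage from approximate to exact critical points (or, equivalently, the uniform-in-$\kappa$ boundedness of the $X_j$ and the stability of the quadratic-form inequality under $O(\kappa)$ perturbations), but this is routine given Assumption~\ref{ass:smooth} and \eqref{eq:globalrip}.
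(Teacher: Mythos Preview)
Your compactness reduction to an exact second-order critical point is a reasonable opening move (modulo the boundedness of the $X_j$, which you do not actually justify: the claim that an approximate first-order point ``cannot have much larger objective value than $M^*$'' is not how boundedness is obtained; one has to argue directly that $\norm{\nabla g_s(X)}_F$ grows with $\norm{X}_F$). But the real gap is in your second step, where you propose to ``substitute the first-order condition to eliminate $\nabla f_s$'' and reduce to a quadratic-form inequality in $E$ by plugging in a clever direction $U$. For a general (non-quadratic) $f_s$ this does not work: the first-order condition $\nabla f_s(\bar X\bar X^T)\bar X=0$ only kills $\langle\nabla f_s,UU^T\rangle$ when the columns of $U$ lie in the range of $\bar X$, whereas the informative second-order directions lie partly \emph{outside} that range; and $\nabla f_s(\bar X\bar X^T)$ and $\nabla^2 f_s(\bar X\bar X^T)$ are otherwise unrelated objects, so the second-order inequality does not collapse to a quadratic form in $E$. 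The paper's mechanism is different and is precisely what you are missing: one first uses a mean-value argument (Lemma~\ref{lem:gradhessian}) to write $\langle\nabla f_s(\bar X\bar X^T),N\rangle=\mathbf e^T\mathbf H\vect(N)$ for a \emph{single} $\delta$-$\RIP_{2r}$ matrix $\mathbf H$, so that both the first- and second-order conditions become linear/LMI constraints on $\mathbf H$; then one minimizes $\delta$ over all such $\mathbf H$ and upper-bounds the associated dual SDP by constructing an explicit dual feasible point (Lemma~\ref{lem:deltaoptbound}), obtaining the sharp inequality $(1-\delta)/(1+\delta)\le 1/3+\Gamma\kappa/\norm{\mathbf e}$. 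This is where the $\delta<1/2$ threshold appears quantitatively, and it yields $\norm{\mathbf e}\le\Gamma\kappa/\bigl((1-\delta)/(1+\delta)-1/3\bigr)$ directly, with no compactness needed. Note also that \citet{Zhang2021-p} only treats the linear-measurement case; the extension to general $f_s$ \emph{is} the contribution here and goes through the $\mathbf H$-SDP, not through a single choice of $U$.

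Two further points you overlook. First, the dual construction requires $\sigma_r(X)>\zeta>0$ (the constant $\Gamma$ contains a $1/\zeta$), so the rank-deficient regime $\sigma_r(X)\le\zeta$ must be handled separately; the paper does this via Lemma~\ref{lem:strictsaddlesv}, and in your limiting argument you would likewise have to treat the possibility $\sigma_r(\bar X)=0$. Second, your assembly ``contradicting $\norm{\bar X\bar X^T-M^*}_F\ge C$ once $C>C_0$'' is off: the lemma asks for \emph{every} $C>0$, so what you actually need from the certificate is that every exact second-order critical point satisfies $\bar X\bar X^T=M^*$ (i.e.\ $C_0=0$), and then make the $\kappa$-dependence quantitative. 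The paper bypasses all of this by getting the quantitative $\norm{\mathbf e}\lesssim\kappa$ bound in one shot.
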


The remaining step is to show that the trajectory of the perturbed gradient descent method will always belong to a bounded region in which the gradient and Hessian of the objective $g_s$ are Lipschitz continuous (see Appendix~\ref{app:globalproof}). Combining the above results with Theorem~3 in \citet{JGNK2017}, we can obtain the following global linear convergence result.

\begin{theorem}\label{thm:globalconvlin}
Consider the symmetric problem \eqref{eq:symrec} with $\delta<1/2$. For every $\epsilon>0$, the perturbed gradient descent method with local improvement under a suitable step size $\eta$ and perturbation size $w$ finds a solution $\hat X$ satisfying $\norm{\hat X\hat X^T-M^*}_F \leq \epsilon$ with high probability in $O(\log(1/\epsilon))$ number of iterations. Here, $\eta$ and $w$ are defined in Algorithm~\ref{alg:perturbed} in Appendix~\ref{app:globalproof}.
\end{theorem}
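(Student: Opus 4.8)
The plan is to combine three ingredients: (i) the local linear convergence result of Theorem~\ref{thm:localconvsym}, (ii) the sharp strict saddle property of Lemma~\ref{lem:strictsaddlelin}, and (iii) the escaping-saddle analysis of perturbed gradient descent from \citet{JGNK2017} (their Theorem~3). The overall structure mirrors the ``perturbed gradient descent with local improvement'' template: in the first phase we run perturbed gradient descent until it reaches a point that approximately satisfies the first- and second-order necessary optimality conditions \eqref{eq:approxoptimalitycond}; then we argue, via the strict saddle property, that this point already lies in the basin of attraction from Theorem~\ref{thm:localconvsym}; finally we switch to plain gradient descent and invoke the local linear rate to drive $\norm{XX^T-M^*}_F$ below $\epsilon$.

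First I would set up the bounded region in which the analysis takes place. Using \eqref{eq:globalrip}--\eqref{eq:levelset}, gradient descent (and its perturbed variant, provided the perturbation size $w$ is chosen small enough that each perturbed step does not increase the objective by more than a controlled amount) keeps the iterates in a sublevel set of $g_s$, which by \eqref{eq:levelset} translates into a bound of the form $\norm{X_tX_t^T-M^*}_F \le \sqrt{(1+\delta)/(1-\delta)}\,\norm{X_0X_0^T-M^*}_F$; together with Assumption~3 this confines the whole trajectory to a fixed bounded set $\mathcal B$. On $\mathcal B$, Assumption~\ref{ass:smooth} and the computations in Appendix~\ref{app:factor} give explicit Lipschitz constants $L$ for $\nabla g_s$ and $\rho$ for $\nabla^2 g_s$, and $g_s$ is bounded below by $f_s(M^*)$ with a bounded gap at the start. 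These are exactly the regularity hypotheses required by Theorem~3 of \citet{JGNK2017}.

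Next I would fix the target radius. Theorem~\ref{thm:localconvsym} requires the initial point of the local phase to satisfy $\norm{XX^T-M^*}_F < 2(\sqrt2-1)(1-\delta)\sigma_r(M^*)=:C_0$. Applying Lemma~\ref{lem:strictsaddlelin} with this $C=C_0$ produces a threshold $\kappa_0>0$ such that whenever $\norm{\nabla g_s(X)}_F \le \kappa_0$ and $\lambda_{\min}(\nabla^2 g_s(X)) \ge -\kappa_0$, one has $\norm{XX^T-M^*}_F < C_0$ (here I use that $M^*=ZZ^T$ for the global minimizer attaining the distance; since all global minimizers of \eqref{eq:symrec} give the same $XX^T=M^*$ under RIP, $\norm{XX^T-ZZ^T}_F=\norm{XX^T-M^*}_F$). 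Now I would invoke \citet{JGNK2017}'s Theorem~3 with accuracy parameter $\kappa_0$: for suitable step size $\eta=\Theta(1/L)$ and perturbation size $w$ depending polynomially on $\kappa_0,L,\rho$ and the failure probability, perturbed gradient descent reaches a point $X$ satisfying \eqref{eq:approxoptimalitycond} with $\kappa=\kappa_0$ in a number of iterations $T_1$ that is polynomial in the problem parameters and in $\log(1/\epsilon)$ only through the success-probability dependence — in particular $T_1$ does \emph{not} grow with $1/\epsilon$ beyond a $\log(1/\epsilon)$ factor. At that point the strict saddle property places $X$ in the Theorem~\ref{thm:localconvsym} basin, the algorithm switches to plain gradient descent, and \eqref{eq:convrate} gives $\norm{X_tX_t^T-M^*}_F \le (1-\mu\eta)^{t/2}\sqrt{(1+\delta)/(1-\delta)}\,C_0$, so a further $T_2=O(\log(1/\epsilon))$ iterations suffice to reach accuracy $\epsilon$. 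Adding $T_1+T_2=O(\log(1/\epsilon))$ gives the claimed iteration complexity, and a union bound over the (finitely many) escape events gives the ``with high probability'' qualifier.

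The main obstacle I anticipate is the careful bookkeeping needed to verify the hypotheses of \citet{JGNK2017}'s escaping-saddle theorem \emph{uniformly} on the trajectory: one must confirm that the iterates never leave the bounded region $\mathcal B$ (so that the Lipschitz constants $L,\rho$ actually apply), which requires checking that the occasional perturbations cannot knock an iterate out of the controlling sublevel set — this is where the precise choice of $w$ in Algorithm~\ref{alg:perturbed} matters and where the ``almost nonincreasing'' distance argument via \eqref{eq:levelset} must be made quantitative. A secondary subtlety is matching the gradient/Hessian notion used in \eqref{eq:approxoptimalitycond} (the matrix $\mathbf G$ with $(\vect U)^T\mathbf G\,\vect V=[\nabla^2 g_s(X)](U,V)$) with the vectorized Hessian in \citet{JGNK2017}, but this is a routine identification. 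Once these are in place, the three results chain together cleanly and the logarithmic iteration count follows immediately from the geometric rate in \eqref{eq:convrate}.
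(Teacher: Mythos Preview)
Your proposal is correct and follows essentially the same route as the paper: chain Lemma~\ref{lem:strictsaddlelin} (with $C=2(\sqrt2-1)(1-\delta)\sigma_r(M^*)$) to Theorem~3 of \citet{JGNK2017} for the first phase, then invoke Theorem~\ref{thm:localconvsym} for the second phase; the paper treats the first-phase iteration count as a constant independent of $\epsilon$, and the boundedness obstacle you anticipate is exactly what the paper handles via a dedicated lemma (Lemma~\ref{lem:lipschitzbound}) that exploits the $t_{\text{thres}}$ spacing of perturbations and the saved-point comparison in Algorithm~\ref{alg:perturbed}.
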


In the above theorem, the order $O(\log(1/\epsilon))$ of the convergence rate is determined by the number of iterations spent in the second phase of the algorithm, because the number of iterations in the first phase is independent of $\epsilon$. Note that we only show the relationship between the number of iterations and $\epsilon$, but the convergence rate also depends on the initial point $X_0$ and the loss function $f_s$. Moreover, although not being related to the final convergence rate, Theorem~3 in \citet{JGNK2017} also shows that the number of iterations in the first phase is polynomial with respect to the problem size.

For asymmetric problems with arbitrary objectives and rank $r$, if we apply the transformation from asymmetric problems to symmetric problems and replace $\delta$ in Theorem~\ref{thm:globalconvlin} with $2\delta/(1+\delta)$, Theorem~\ref{thm:globalconvlin} immediately implies the following global linear convergence result.

\begin{theorem}\label{thm:globalconv}
Consider the asymmetric problem \eqref{eq:asymrec} with $\delta<1/3$. For every $\epsilon>0$, the perturbed gradient descent method with local improvement under a suitable step size $\eta$ and perturbation size $w$ finds a solution $\hat X$ satisfying $\norm{\hat X\hat X^T-\tilde M^*}_F \leq \epsilon$ with high probability in $O(\log(1/\epsilon))$ number of iterations.
\end{theorem}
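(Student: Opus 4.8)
The plan is to reduce Theorem~\ref{thm:globalconv} entirely to the symmetric result Theorem~\ref{thm:globalconvlin} via the lifting introduced in \eqref{eq:asymtransfunc}--\eqref{eq:asymtrans}. Recall that with the regularization coefficient fixed by Assumption~\ref{ass:regcoeff} to $\phi=(1-\delta)/2$, the asymmetric problem \eqref{eq:asymrec} is exactly the symmetric problem $\min_X F(XX^T)$ in \eqref{eq:asymtrans}, whose objective $g_a(X)=F(XX^T)$ has the form $f_s(XX^T)$ with $f_s=F$. Hence it suffices to check that this particular symmetric instance satisfies all the hypotheses of Theorem~\ref{thm:globalconvlin} and then read off the conclusion.

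First I would record the structural facts already supplied by the preliminaries. (i) As noted after Assumption~\ref{ass:regcoeff} (and proved in \citet{ZBL2021-p}), the lifted loss $F$ satisfies the $\tfrac{2\delta}{1+\delta}$-$\RIP_{2r}$ property after multiplication by a positive constant; since scaling the objective by a positive constant leaves the set of global minimizers and the iterates of (perturbed) gradient descent unchanged up to a rescaling of the step size and perturbation magnitude, we may work with the scaled function. (ii) The assumption $\delta<1/3$ is equivalent to $\tfrac{2\delta}{1+\delta}<1/2$, so the RIP constant of the lifted problem lies strictly below the $1/2$ threshold required by Theorem~\ref{thm:globalconvlin}. (iii) By \eqref{eq:auggroundtruth} and the identities recorded afterwards, the augmented ground truth $\tilde M^*$ is a rank-$r$ global minimizer of \eqref{eq:asymtrans} with $\norm{\tilde M^*}_F=2\norm{M^*}_F\le 2D$ and $\sigma_r(\tilde M^*)=2\sigma_r(M^*)>0$, and Appendix~\ref{app:factor} shows that the gradient and Hessian of $g_a$ are restricted Lipschitz continuous over bounded regions with the same qualitative constants as those of a symmetric $g_s$, so Assumption~\ref{ass:smooth} transfers. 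Consequently the boundedness-of-trajectory argument in the proof of Theorem~\ref{thm:globalconvlin} (Appendix~\ref{app:globalproof}), which only uses \eqref{eq:globalrip} and the restricted Lipschitz bounds, applies verbatim to $g_a$, as does the strict saddle property of Lemma~\ref{lem:strictsaddlelin} applied to the lifted problem.

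Given these, the proof concludes by invoking Theorem~\ref{thm:globalconvlin} on the symmetric problem \eqref{eq:asymtrans}, with the substitutions $\delta\mapsto\tfrac{2\delta}{1+\delta}$, $D\mapsto 2D$, $\sigma_r(M^*)\mapsto 2\sigma_r(M^*)$ throughout (and $\phi,\rho_1,\rho_2$ replaced by their lifted and scaled counterparts, and the local-convergence radius being the one in Theorem~\ref{thm:localconvasym}). This yields, for every $\epsilon>0$, a suitable step size $\eta$ and perturbation size $w$ such that the perturbed gradient descent method with local improvement run on $g_a$ produces $\hat X$ with $\norm{\hat X\hat X^T-\tilde M^*}_F\le\epsilon$ with high probability after $O(\log(1/\epsilon))$ iterations, which is the claim.

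The main obstacle is not conceptual but bookkeeping: one must verify that every quantity appearing in the statement and proof of Theorem~\ref{thm:globalconvlin} — the local-convergence radius inherited from Theorem~\ref{thm:localconvasym}, the strict-saddle threshold $\kappa$ from Lemma~\ref{lem:strictsaddlelin}, the parameters $(\eta,w)$ of Algorithm~\ref{alg:perturbed}, and the bound $D$ controlling the trajectory — transforms consistently under the simultaneous substitution $\delta\mapsto 2\delta/(1+\delta)$, $D\mapsto 2D$, $\sigma_r\mapsto 2\sigma_r$, together with the multiplicative rescaling of $F$. In particular, care is needed so that running the algorithm on the original (unscaled) $g_a$ with the adjusted step size is genuinely equivalent to running it on the scaled symmetric surrogate to which Theorem~\ref{thm:globalconvlin} literally applies; once this equivalence is pinned down, no new estimate is required.
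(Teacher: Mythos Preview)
Your proposal is correct and follows exactly the paper's approach: the paper derives Theorem~\ref{thm:globalconv} simply by applying the asymmetric-to-symmetric transformation \eqref{eq:asymtransfunc}--\eqref{eq:asymtrans}, replacing $\delta$ with $2\delta/(1+\delta)$ (together with $D\mapsto 2D$ and $\sigma_r(M^*)\mapsto 2\sigma_r(M^*)$), and noting that $\delta<1/3$ is equivalent to $2\delta/(1+\delta)<1/2$, so Theorem~\ref{thm:globalconvlin} applies directly. Your additional remarks on the Lipschitz transfer via Lemma~\ref{lem:smoothnessasym} and the step-size/scaling bookkeeping are apt but are left implicit in the paper.
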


\section{Numerical Illustration}\label{sec:numerical}

\begin{figure*}[t]
\centering
\subfloat[]{\includegraphics[width=6cm]{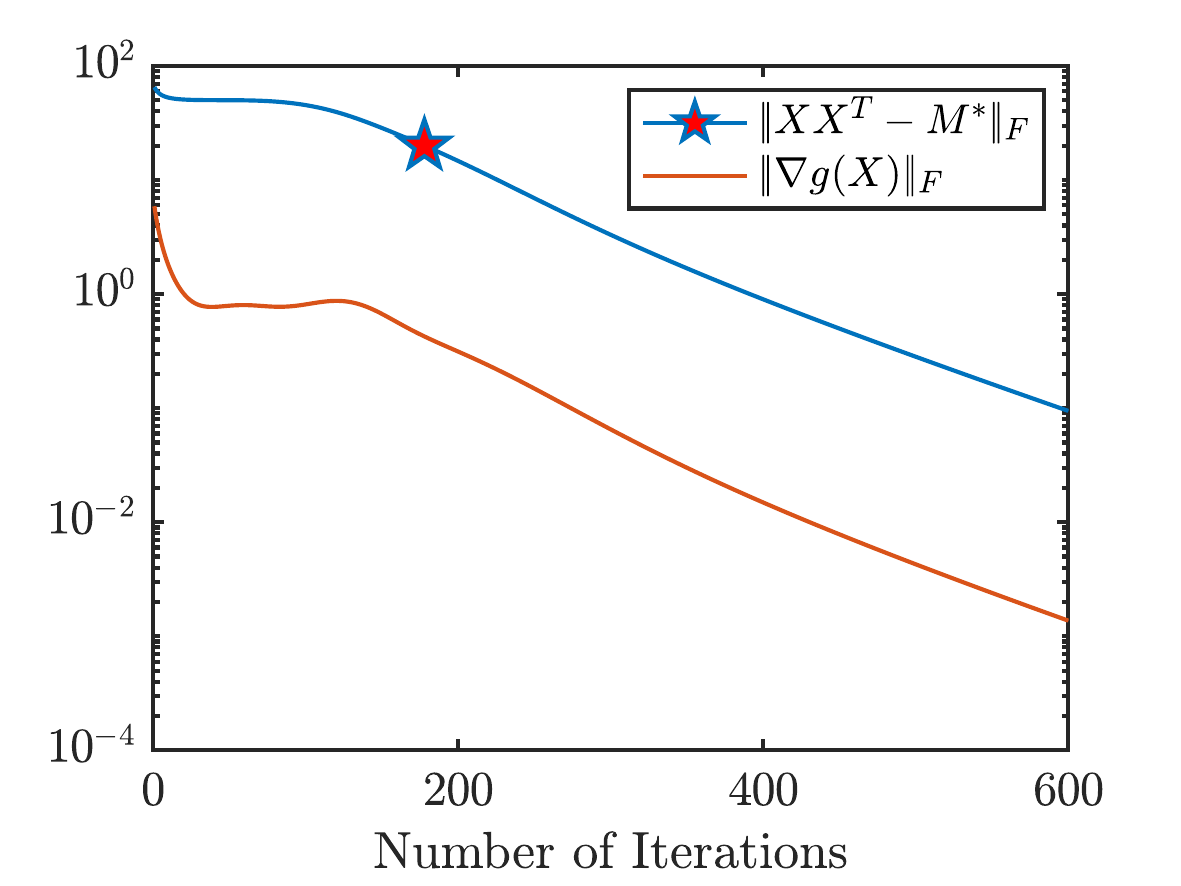}}\qquad
\subfloat[]{\includegraphics[width=6cm]{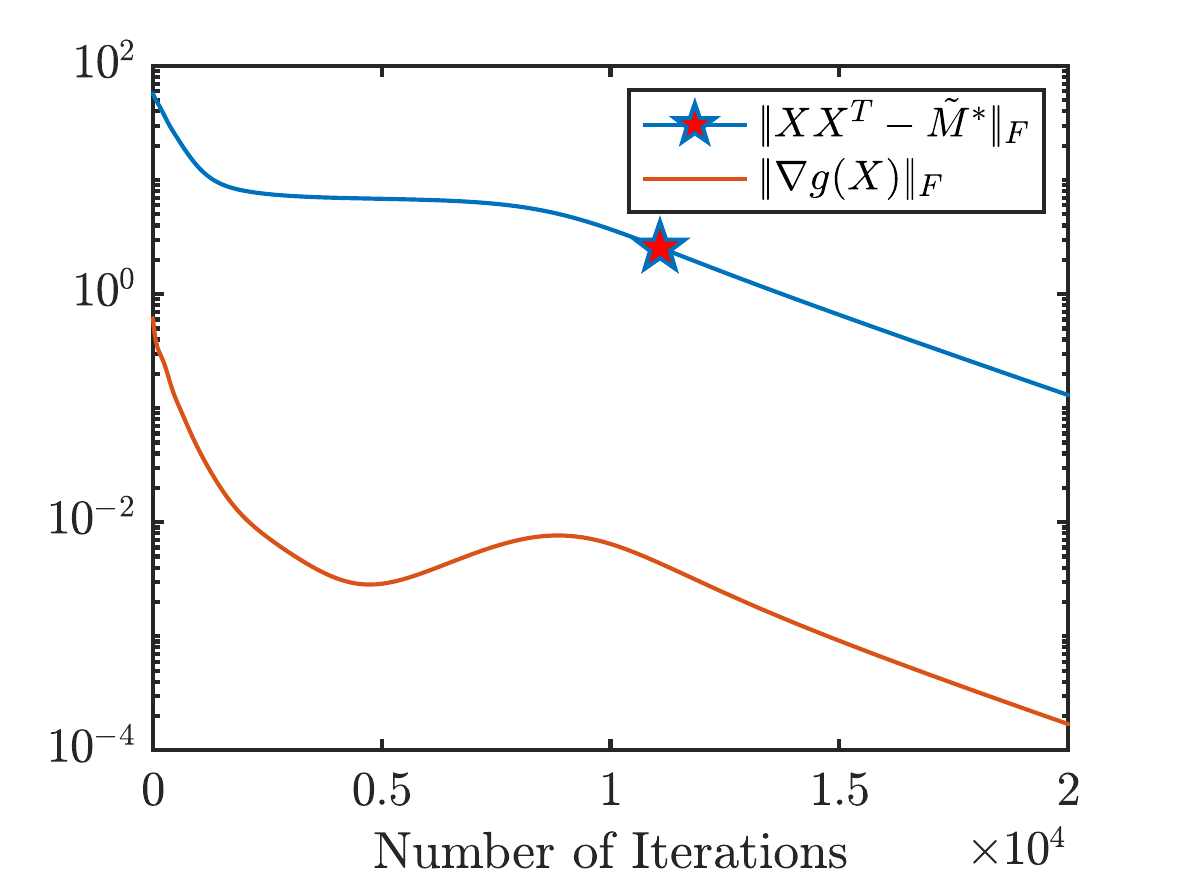}} \\
\subfloat[]{\includegraphics[width=6cm]{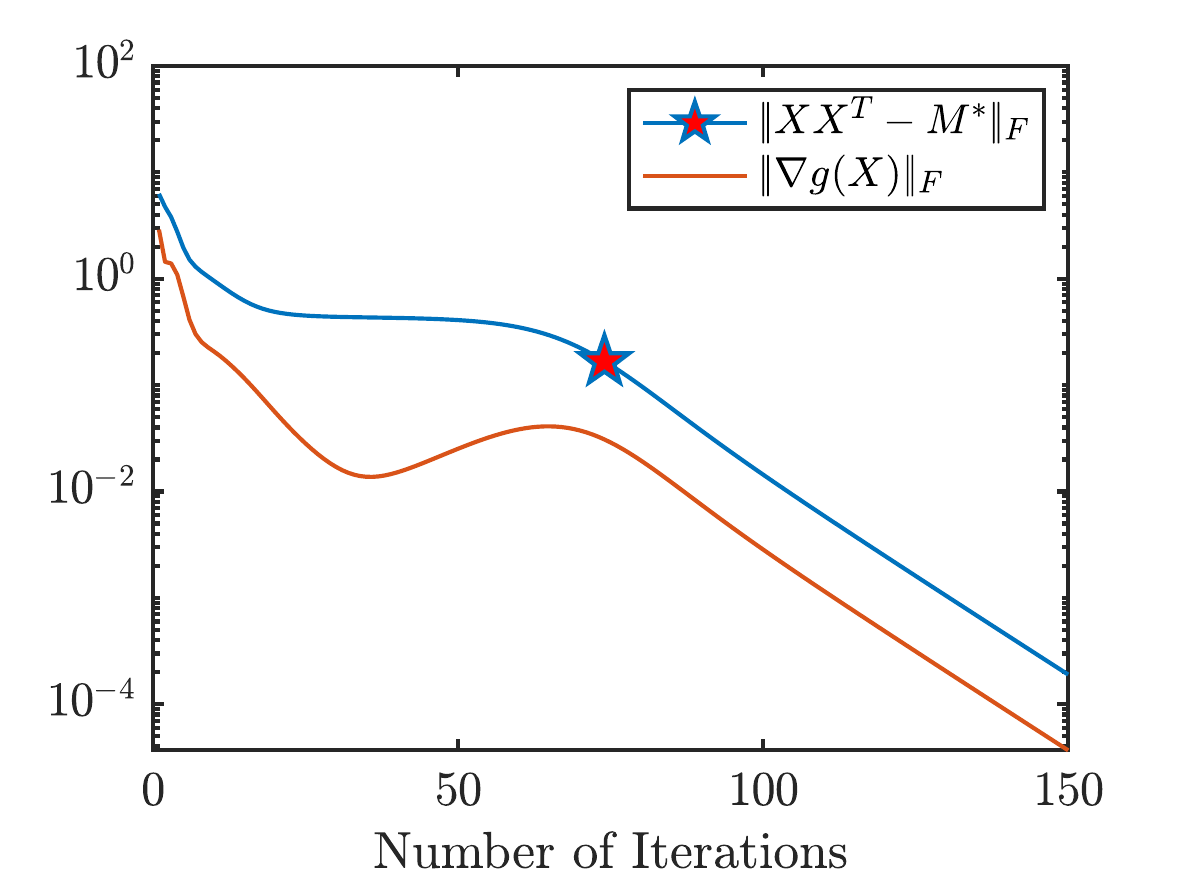}}\qquad
\subfloat[]{\includegraphics[width=6cm]{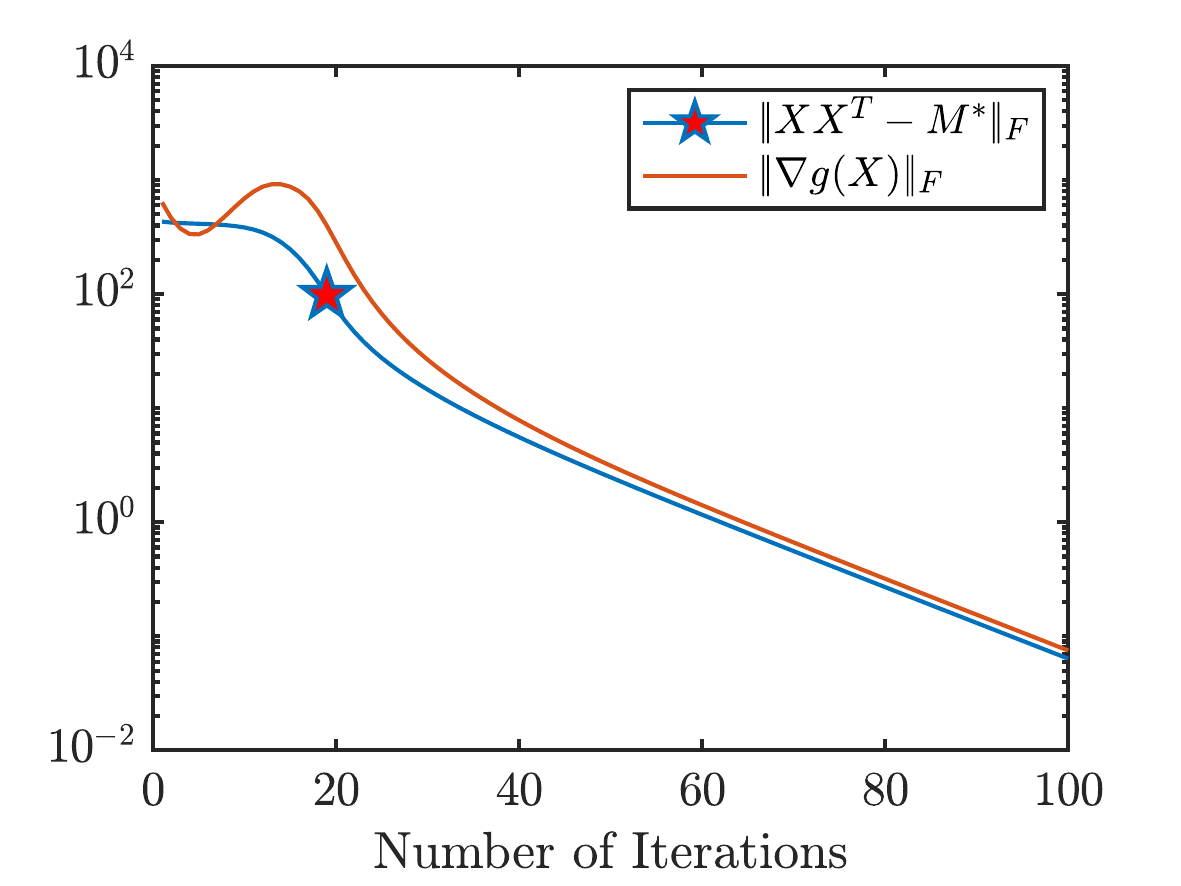}}
\caption{The trajectory of the perturbed gradient descent method for solving the low-rank matrix recovery problem. The marker in each figure shows the boundary of the local convergence region provided by Theorem~\ref{thm:localconvsym}. (a) A symmetric linear problem with $r=1$, $n=40$, $p=120$ and $\delta$ estimated to be $0.49$. (b) An asymmetric linear problem with $r=5$, $n=10$, $m=8$, $p=220$ and $\delta$ estimated to be $0.32$. (c) The 1-bit matrix recovery problem with $r=5$, $n=10$. (d) The 1-bit matrix recovery problem with $r=2$, $n=600$.}\label{fig:numerical}
\end{figure*}

In this section, we conduct numerical experiments to demonstrate the behavior of the perturbed gradient descent algorithm for solving low-rank matrix recovery problems. The linear convergence rate observed for the examples below supports our theoretical analyses in Section~\ref{sec:local} and Section~\ref{sec:global}.

In the first experiment, we consider the loss function \eqref{eq:linearobj} induced by a linear operator $\mathcal A$ with
\[
\mathcal A(M)=(\langle A_1,M\rangle,\dots,\langle A_p,M\rangle).
\]
Here, each entry of $A_i$ is independently generated from the standard Gaussian distribution. As shown in \citet{CP2011}, such linear operator $\mathcal A$ satisfies RIP with high probability if the number of measurements is large enough. Since it is NP-hard to check whether the resulting loss function $f_s$ or $f_a$ satisfies the $\delta$-$\RIP_{2r}$ for certain $\delta$, the $\delta$ parameter is estimated as follows: For the symmetric problem \eqref{eq:symrec}, we first generate $10^4$ random matrices $X \in \RR^{n \times 2r}$ with each entry independently selected from the standard Gaussian distribution, and then find the proper scaling factor $a \in \RR$ and the smallest $\delta$ such that
\[
(1-\delta)\norm{XX^T}_F^2 \leq \norm{a\mathcal A(XX^T)}^2 \leq (1+\delta)\norm{XX^T}_F^2
\]
holds for all generated matrices $X$. The $\delta$ parameter for the asymmetric problem \eqref{eq:asymrec} can be estimated similarly. After that, the ground truth $M^*=XX^T$ or $M^*=UV^T$ is generated randomly with each entry of $X$ or $(U,V)$ independently selected from the standard Gaussian distribution. The initial point is generated in the same way.

Figure~\ref{fig:numerical}(a) and (b) show the difference between the obtained solution and the ground truth together with the norm of the gradient of the objective function at different iterations. The convergence behavior clearly divides into two stages. The convergence rate is sublinear initially and then switches to linear when the current point moves into the local region associated with the PL inequality. In Figure~\ref{fig:numerical}(a) and (b), the marker shows the first time when the current point falls into the local convergence region provide in Theorem~\ref{thm:localconvsym} or Theorem~\ref{thm:localconvasym}. It can be seen that these theorems predict the boundary of the transition from a sublinear convergence rate to the linear convergence rate fairly tightly. After this point, $O(\log(1/\epsilon))$ additional iterations are needed to find an approximate solution with accuracy $\epsilon$. On the other hand, the occasion when perturbation needs to be added is rare in practice since it is unlikely for the trajectory to be very close to a saddle point. However, such perturbation is necessary theoretically to deal with pathological cases.

Second, we consider the 1-bit matrix recovery \citep{DPBW2014} with full measurements, which is a nonlinear low-rank matrix recovery problem. In this problem, there is an unknown symmetric ground truth matrix $\hat M \in \RR^{n \times n}$ with $\hat M \succeq 0$ and $\rank(\hat M)=r$. One is allowed to take independent measurements on every entry $\hat M_{ij}$, where each measurement value is a binary random variable whose distribution is given by $Y_{ij}=1$ with probability $\sigma(\hat M_{ij})$ and $Y_{ij}=0$ otherwise. Here, $\sigma(x)$ is commonly chosen to be the sigmoid function $\ee^x/(\ee^x+1)$. After a number of measurements are taken, let $y_{ij}$ be the percentage of the measurements on the $(i,j)$-th entry that are equal to $1$. The goal is to find the maximum likelihood estimator for the ground truth $\hat M$, which can be formulated as finding the global minimizer $M^*$ of the problem \eqref{eq:symrec} with
\[
f_s(M)=-\sum_{i=1}^n\sum_{j=1}^n(y_{ij}M_{ij}-\log(1+\ee^{M_{ij}})).
\]
To establish the RIP condition for the function $f_s$ above, consider its Hessian $\nabla^2f_s(M)$ that is given by
\[
[\nabla^2f_s(M)](K,L)=\sum_{i=1}^n\sum_{j=1}^n\sigma'(M_{ij})K_{ij}L_{ij},
\]
for every $M,K,L \in \RR^{n \times n}$. On the region
\begin{equation}\label{eq:onebitregion}
\{M \in \RR^{n \times n} | \, \abs{M_{ij}} \leq 2.29, \: \forall i,j=1,\dots,n\},
\end{equation}
we have $1/12<\sigma'(M_{ij}) \leq 1/4$, and thus the function $f_s$ satisfies the $\delta$-$\RIP_{2r}$ property with $\delta<1/2$.

Note that due to the noisy measurements the global minimizer $M^*$ is not equal to $\hat M$ in general. However, for demonstration purposes we should know $M^*$ a priori, and hence we consider the case when the number of measurements is large enough such that $y_{ij}=\sigma(\hat M_{ij})$ and $M^*=\hat M$. In Figure~\ref{fig:numerical}(c), the ground truth and the initial point are generated randomly in the region \eqref{eq:onebitregion}. Here, we can observe a similar two-stage convergence behavior as in the example with linear measurements. We experiment on the same problem with a larger matrix size $n$ as shown in Figure~\ref{fig:numerical}(d), which also gives similar results.

\section{Conclusion}

In this paper, we study the local and global convergence behaviors of gradient-based local search methods for solving low-rank matrix recovery problems in both symmetric and asymmetric cases. First, we present a novel method to identify a local region in which the PL inequality is satisfied, which is significantly larger than the region associated with the regularity conditions proven in the prior literature. This leads to a linear convergence result for the gradient descent method over a large local region. Second, we develop the strict saddle property for symmetric problems under the $\delta$-$\RIP_{2r}$ property with $\delta<1/2$. Then, we prove the global linear convergence of the perturbed gradient descent method for symmetric problems under the $\delta$-$\RIP_{2r}$ property with $\delta<1/2$, and the same convergence property can also be guaranteed for asymmetric problems with $\delta<1/3$. Compared with the existing results, these conditions are remarkably weaker and can be applied to a larger class of problems.

\bibliography{main-05122021}

\section*{Acknowledgments}

This work was supported by grants from AFOSR, ARO, ONR, and NSF.

\newpage
\onecolumn

\appendix

\section{Properties of the Factored Objectives}\label{app:factor}

We first study the smoothness properties for the gradient and Hessian of the objective function $g_s$ in the symmetric problem \eqref{eq:symrec}. The following lemma is borrowed from the proof of Theorem~7 in \citet{BL2020-pb}.

\begin{lemma}\label{lem:rop2r}
If $\mathcal Q$ is a quadratic form satisfying $\delta$-$\RIP_{2r}$, then
\[
\abs{[\mathcal Q](K,L)-\langle K,L\rangle} \leq 2\delta\norm{K}_F\norm{L}_F,
\]
for all matrices $K,L \in \RR^{n \times n}$ of rank at most $2r$.
\end{lemma}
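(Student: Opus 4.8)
The plan is to reduce the bilinear statement to a statement about a single matrix $K$, apply the polarization identity, and then invoke the $\delta$-$\RIP_{2r}$ bounds on well-chosen rank-$(2r)$ test matrices. First, I would recall that $\mathcal Q$ being a quadratic form means there is a symmetric bilinear form associated with it, so that $[\mathcal Q](K,K)$ is a genuine quadratic in $K$ and the polarization identity
\[
[\mathcal Q](K,L)=\tfrac14\bigl([\mathcal Q](K+L,K+L)-[\mathcal Q](K-L,K-L)\bigr)
\]
holds, with the analogous identity for the Euclidean inner product $\langle K,L\rangle=\tfrac14(\norm{K+L}_F^2-\norm{K-L}_F^2)$. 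Subtracting, the quantity to be bounded becomes
\[
[\mathcal Q](K,L)-\langle K,L\rangle=\tfrac14\Bigl(\bigl([\mathcal Q](K+L,K+L)-\norm{K+L}_F^2\bigr)-\bigl([\mathcal Q](K-L,K-L)-\norm{K-L}_F^2\bigr)\Bigr).
\]

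Next, the key observation is that if $K$ and $L$ each have rank at most $2r$, then $K+L$ and $K-L$ have rank at most $4r$ in general — which is \emph{not} directly covered by $\delta$-$\RIP_{2r}$. So the naive polarization is too lossy. Instead I would exploit homogeneity and a normalization: it suffices to prove the inequality when $\norm{K}_F=\norm{L}_F=1$, and in that regime one rescales, taking $K'=K+L$ and $K''=K-L$ only after checking their ranks, or better, one uses the standard trick of polarizing with a scalar parameter $t>0$, writing $[\mathcal Q](K,L)=\tfrac{1}{4}\bigl([\mathcal Q](tK+t^{-1}L,\,tK+t^{-1}L)-[\mathcal Q](tK-t^{-1}L,\,tK-t^{-1}L)\bigr)$ and optimizing over $t$. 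The cleaner route, and the one I expect the paper uses, is this: for rank-at-most-$2r$ matrices $K,L$, the matrices $K+L$ and $K-L$ need not be low rank, but one can instead embed the problem. Actually the honest fix is that $\delta$-$\RIP_{2r}$ as defined applies to $K$ of rank $\le 2r$ only through $[\nabla^2 f_s(M)](K,K)$; for the polarized form we need rank $\le 2r$ for the \emph{difference} argument. The resolution is that we are told $K,L$ have rank at most $2r$ each, so write $L = L$ and apply the RIP bound along the one-parameter family $K + tL$; since this has rank $\le 4r$, we would actually need $\RIP_{4r}$ — unless the definition's $K$ can itself be $K+tL$ with the claim only needing rank $\le 2r$ on $K$ and $L$ separately. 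I will therefore assume the intended reading is that polarization is applied so that only rank-$(2r)$ arguments appear, e.g.\ by the identity $[\mathcal Q](K,L)-\langle K,L\rangle = \tfrac14\sum_{\epsilon\in\{\pm1\}}\epsilon\,\bigl([\mathcal Q](K+\epsilon L)-\norm{K+\epsilon L}_F^2\bigr)$ combined with the fact that the RIP definition in this paper is stated for $K$ of rank $\le 2r$, and here $K\pm L$ plays the role of that argument, forcing us to interpret the hypothesis as giving $\RIP_{2r}$ control on all such combinations; granting that, each bracketed term is bounded by $2\delta\norm{K\pm L}_F^2$ by \eqref{eq:globalrip}-type reasoning, and $\norm{K+L}_F^2+\norm{K-L}_F^2 = 2\norm{K}_F^2+2\norm{L}_F^2 \le 4\norm{K}_F\norm{L}_F$ fails in general — rather the parallelogram law gives $\norm{K+L}_F^2+\norm{K-L}_F^2=2(\norm{K}_F^2+\norm{L}_F^2)$, and after the homogeneous normalization $\norm K_F=\norm L_F$ one gets the factor $4\norm K_F\norm L_F$ exactly. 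So the final bound is $\tfrac14\cdot\delta\cdot\bigl(\norm{K+L}_F^2+\norm{K-L}_F^2\bigr)$ — wait, each term contributes $2\delta$, so $\tfrac14\cdot 2\delta\cdot 2(\norm K_F^2+\norm L_F^2)=\delta(\norm K_F^2+\norm L_F^2)$, and under $\norm K_F=\norm L_F$ this is $2\delta\norm K_F\norm L_F$.

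The steps in order: (1) reduce to $\norm{K}_F=\norm{L}_F$ by homogeneity (replace $L$ by $\lambda L$ with $\lambda=\norm K_F/\norm L_F$, noting both sides scale linearly in $\lambda$; handle $L=0$ trivially); (2) apply the polarization identity to both $\mathcal Q$ and $\langle\cdot,\cdot\rangle$ and subtract; (3) bound each term $\abs{[\mathcal Q](K\pm L,K\pm L)-\norm{K\pm L}_F^2}\le 2\delta\norm{K\pm L}_F^2$ using that the quadratic form $M\mapsto[\mathcal Q](M,M)$ satisfies $(1-\delta)\norm M_F^2\le[\mathcal Q](M,M)\le(1+\delta)\norm M_F^2$ on the relevant rank class (this is where the RIP hypothesis enters, and the rank bookkeeping must be checked against Definition~\ref{def:rip}); (4) combine via the parallelogram law $\norm{K+L}_F^2+\norm{K-L}_F^2=2\norm K_F^2+2\norm L_F^2=4\norm K_F\norm L_F$ under the normalization from step (1). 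The main obstacle is step (3)'s rank bookkeeping: one must argue that $K\pm L$ lies in the rank class on which $\RIP_{2r}$ gives the two-sided bound — since this is a lemma "borrowed from the proof of Theorem~7 in \citet{BL2020-pb}," the intended statement almost certainly has the quadratic form already arising as a restriction to a subspace on which the relevant combinations stay low-rank, or the RIP is invoked in its $(2r_1,2r_2)$ form, so I would make that dependence explicit rather than fight it.
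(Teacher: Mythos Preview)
The paper does not actually prove this lemma; it simply cites the proof of Theorem~7 in \citet{BL2020-pb}. So there is no proof in the paper to compare against. That said, your proposal has a genuine gap and an arithmetic slip worth flagging.

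The gap is exactly the one you identify but do not resolve: if $K$ and $L$ each have rank at most $2r$, then $K\pm L$ can have rank up to $4r$, so the $\delta$-$\RIP_{2r}$ hypothesis does \emph{not} apply to $[\mathcal Q](K\pm L,K\pm L)$. Your suggestion to ``make that dependence explicit rather than fight it'' amounts to assuming a stronger hypothesis ($\RIP_{4r}$ or a subspace restriction) that the lemma does not provide. The standard fix is to split: using the SVD, write $K=K_1+K_2$ and $L=L_1+L_2$ with $\rank(K_i),\rank(L_j)\le r$ and $\langle K_1,K_2\rangle=\langle L_1,L_2\rangle=0$. Now each $K_i\pm L_j$ has rank at most $2r$, so polarization applies termwise:
\[
\bigl|[\mathcal Q](K_i,L_j)-\langle K_i,L_j\rangle\bigr|\le \tfrac{\delta}{4}\bigl(\norm{K_i+L_j}_F^2+\norm{K_i-L_j}_F^2\bigr)=\tfrac{\delta}{2}\bigl(\norm{K_i}_F^2+\norm{L_j}_F^2\bigr).
\]
Summing over the four pairs $(i,j)$ and using orthogonality ($\norm{K_1}_F^2+\norm{K_2}_F^2=\norm{K}_F^2$, likewise for $L$) gives $\delta(\norm{K}_F^2+\norm{L}_F^2)$. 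After your normalization $\norm{K}_F=\norm{L}_F$, this is $2\delta\norm{K}_F\norm{L}_F$. The factor $2$ in the lemma comes from this $2\times 2$ splitting, not from the RIP bound itself.

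Separately, your step~(3) claims $\abs{[\mathcal Q](M,M)-\norm{M}_F^2}\le 2\delta\norm{M}_F^2$, but the RIP definition gives $\delta$, not $2\delta$. Your final constant happens to come out right only because this error of a factor of $2$ compensates for the missing splitting step; with the correct RIP bound and no splitting, direct polarization (even granting the rank issue) would give $\delta\norm{K}_F\norm{L}_F$, not $2\delta\norm{K}_F\norm{L}_F$.
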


\begin{lemma}\label{lem:smoothness}
For a given constant $R$ greater than $D$, the gradient $\nabla g_s$ of the function $g_s$ in the symmetric problem \eqref{eq:symrec} is $8\rho_1 r^{1/2}R$-restricted Lipschitz continuous and the Hessian $\nabla^2g_s$ is $4\rho_1r^{1/4}R^{1/2}(2r^{1/2}R\rho_2/\rho_1+3)$-restricted Lipschitz continuous over the region
\[
\mathcal D=\{X \in \RR^{n \times r} | \norm{XX^T}_F \leq R\}.
\]
\end{lemma}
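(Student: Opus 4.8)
The plan is to unfold the chain rule for $g_s(X)=f_s(XX^T)$, express $\nabla g_s$ and $\nabla^2 g_s$ in terms of $\nabla f_s$ and $\nabla^2 f_s$ evaluated at low-rank matrices, and then bound the resulting differences using Assumption~\ref{ass:smooth} together with Lemma~\ref{lem:rop2r}. Recall the standard identities $\nabla g_s(X) = 2\,\nabla f_s(XX^T)\,X$ and, for a perturbation $U \in \RR^{n\times r}$,
\[
[\nabla^2 g_s(X)](U,U) = 2\langle \nabla f_s(XX^T),\, UU^T\rangle + [\nabla^2 f_s(XX^T)](XU^T+UX^T,\, XU^T+UX^T).
\]
Key preliminary observations: for $X \in \mathcal D$ we have $\norm{XX^T}_F \le R$, hence $\norm{X}_F \le R^{1/2}$ and $\sigma_1(X) \le R^{1/2}$; also $XX^T$ has rank at most $r$, and any $XU^T+UX^T$ has rank at most $2r$, so the RIP-type bounds of Assumption~\ref{ass:rip}/Lemma~\ref{lem:rop2r} and the restricted Lipschitz bounds of Assumption~\ref{ass:smooth} all apply. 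One more elementary fact I will use repeatedly: for $X,Y \in \mathcal D$, $\norm{XX^T - YY^T}_F \le (\norm{X}_F+\norm{Y}_F)\norm{X-Y}_F \le 2R^{1/2}\norm{X-Y}_F$ via the factorization $XX^T-YY^T = X(X-Y)^T + (X-Y)Y^T$.

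For the gradient, I would write
\[
\nabla g_s(X) - \nabla g_s(Y) = 2\big(\nabla f_s(XX^T)-\nabla f_s(YY^T)\big)X + 2\,\nabla f_s(YY^T)(X-Y),
\]
bound the first term by $2\rho_1\norm{XX^T-YY^T}_F\,\norm{X}_F \le 2\rho_1 \cdot 2R^{1/2}\norm{X-Y}_F \cdot R^{1/2} = 4\rho_1 R\norm{X-Y}_F$ using the restricted Lipschitz continuity of $\nabla f_s$, and bound $\norm{\nabla f_s(YY^T)}_F$ for the second term. To control $\norm{\nabla f_s(YY^T)}_F$ I would use $\nabla f_s(YY^T) = \nabla f_s(0) + \int_0^1 \nabla^2 f_s(tYY^T)[YY^T]\,dt$-type reasoning, or more directly the RIP bound which gives an operator bound $\norm{\nabla f_s(M)-\nabla f_s(M')}_F \le (1+\delta)\norm{M-M'}_F$ on low-rank matrices plus a normalization; combined with $\rho_1 \ge 1+2\delta$ from Assumption~\ref{ass:rip} and $\norm{YY^T}_F \le R$, this yields $\norm{\nabla f_s(YY^T)(X-Y)}_F \le \rho_1 R^{1/2}\cdot \norm{X-Y}_F$ (possibly with a reference value $\nabla f_s$ at the ground truth absorbed since $M^*$ is a minimizer). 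Summing the two contributions gives the $8\rho_1 r^{1/2}R$ constant, where the $r^{1/2}$ slack presumably comes from converting between spectral and Frobenius norms when passing $X$ through $\nabla f_s(\cdot)$ acting as a bilinear form; I would track this factor carefully.

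For the Hessian the structure is the same but with more cross terms. I would expand $[\nabla^2 g_s(X)](U,U) - [\nabla^2 g_s(Y)](U,U)$ into (i) the difference of the $\langle \nabla f_s(\cdot), UU^T\rangle$ terms, controlled by restricted Lipschitz continuity of $\nabla f_s$ and $\norm{UU^T}_F \le \norm{U}_F^2$, contributing a term proportional to $\rho_1 R^{1/2}\norm{X-Y}_F\norm{U}_F^2$; and (ii) the difference of the $[\nabla^2 f_s(\cdot)](XU^T+UX^T, XU^T+UX^T)$ terms. For (ii) I would telescope: change the base point of the Hessian from $XX^T$ to $YY^T$ (costing $\rho_2\norm{XX^T-YY^T}_F\norm{XU^T+UX^T}_F^2 \le \rho_2\cdot 2R^{1/2}\norm{X-Y}_F\cdot(2R^{1/2}\norm{U}_F)^2$, the source of the $\rho_2$ term), and then change the arguments from $XU^T+UX^T$ to $YU^T+UY^T$, bounding $\norm{(X-Y)U^T+U(X-Y)^T}_F \le 2\norm{X-Y}_F\norm{U}_F$ and using the RIP upper bound $(1+\delta)$ on $\nabla^2 f_s$, together with $\rho_1 \ge 1+2\delta$, to get a term proportional to $\rho_1 R^{1/2}\norm{X-Y}_F\norm{U}_F^2$. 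Collecting all pieces and grouping the $\rho_1$-terms and the single $\rho_2$-term yields the stated constant $4\rho_1 r^{1/4}R^{1/2}(2r^{1/2}R\rho_2/\rho_1 + 3)$.

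The main obstacle I anticipate is bookkeeping rather than any conceptual difficulty: getting the numerical constants and the exact powers of $r$ and $R$ to match the claimed bounds requires being careful about (a) which norm inequality is used each time a matrix is fed into $\nabla f_s$ or $\nabla^2 f_s$ as a bilinear/linear form — spectral-vs-Frobenius conversions introduce the $r^{1/2}$ and $r^{1/4}$ factors — and (b) uniformly bounding $\norm{\nabla f_s}$ at points on the trajectory, which is where Assumption~\ref{ass:rip}'s requirement $\rho_1 \ge 1+2\delta$ is invoked to fold the RIP-derived operator bound into the single constant $\rho_1$. Once the right sequence of triangle inequalities and the factorization $XX^T-YY^T = X(X-Y)^T+(X-Y)Y^T$ are in place, everything is a routine estimate over the bounded region $\mathcal D$.
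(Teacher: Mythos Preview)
Your overall strategy matches the paper's: write $\nabla g_s(X)=2\nabla f_s(XX^T)X$, expand $[\nabla^2 g_s(X)](U,U)$ by the chain rule, telescope the differences, and bound each piece using the restricted Lipschitz continuity of $\nabla f_s,\nabla^2 f_s$ together with the RIP bound from Lemma~\ref{lem:rop2r}. The decomposition into the three Hessian terms (change of base point, change of arguments, gradient inner product) is exactly what the paper does.

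There is, however, one concrete error that would derail the constant-tracking. You assert that $\norm{XX^T}_F\le R$ implies $\norm{X}_F\le R^{1/2}$. This is false: if $\sigma_1,\dots,\sigma_r$ are the singular values of $X$, then $\norm{XX^T}_F^2=\sum_i\sigma_i^4$ while $\norm{X}_F^2=\sum_i\sigma_i^2$, and Cauchy--Schwarz only gives $\sum_i\sigma_i^2\le r^{1/2}(\sum_i\sigma_i^4)^{1/2}$, i.e.\ $\norm{X}_F\le r^{1/4}\norm{XX^T}_F^{1/2}\le r^{1/4}R^{1/2}$. (Take $X$ to be the first $r$ columns of $I_n$ to see the bound is tight.) Consequently $\norm{XX^T-YY^T}_F\le 2r^{1/4}R^{1/2}\norm{X-Y}_F$, not $2R^{1/2}\norm{X-Y}_F$. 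This inequality, not any spectral-versus-Frobenius conversion, is the sole source of the $r^{1/4}$ and $r^{1/2}$ factors in the final constants; once you use the correct $\norm{X}_F$ bound, the powers of $r$ fall out mechanically from the products $\norm{X}_F\cdot\norm{XX^T-YY^T}_F$ and $\norm{X}_F^2\cdot\norm{XX^T-YY^T}_F$.

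A second point to make precise: to bound $\norm{\nabla f_s(YY^T)}_F$ the paper uses $\nabla f_s(M^*)=0$ (since $M^*$ minimizes $f_s$) and the restricted Lipschitz property to get $\norm{\nabla f_s(YY^T)}_F\le\rho_1\norm{YY^T-M^*}_F\le\rho_1(R+D)\le 2\rho_1 R$, where the hypothesis $R>D$ is used. Your parenthetical ``possibly with a reference value at the ground truth'' is the right idea, but your stated bound $\rho_1 R^{1/2}$ for this term is off; it should be $2\rho_1 R$, which then gives the $4\rho_1 R$ contribution (after the factor $2$ in $\nabla g_s=2\nabla f_s X$) that combines with the other $4\rho_1 r^{1/2}R$ piece to yield $8\rho_1 r^{1/2}R$.
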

\begin{proof}
For every $U \in \mathcal D$, we have
\begin{equation}\label{eq:normofproduct}
\norm{U}_F=\sqrt{\sum_{i=1}^r\sigma_i(U)^2} \leq \sqrt[4]{r\sum_{i=1}^r\sigma_i(U)^4}=\sqrt[4]{r\sum_{i=1}^r\lambda_i(UU^T)^2}=r^{1/4}\norm{UU^T}_F^{1/2} \leq r^{1/4}R^{1/2}.
\end{equation}
Furthermore, for every $U,V \in \mathcal D$, it holds that
\[
\norm{UU^T-VV^T}_F=\norm{U(U-V)^T+(U-V)V^T}_F \leq 2r^{1/4}R^{1/2}\norm{U-V}_F.
\]
To prove that the gradient $\nabla g_s$ is Lipschitz continuous, one can write
\begin{align*}
\norm{\nabla g_s(U)-\nabla g_s(V)}_F&=2\norm{\nabla f_s(UU^T)U-\nabla f_s(VV^T)V}_F \\
&\leq 2\norm{\nabla f_s(UU^T)U-\nabla f_s(VV^T)U}_F+2\norm{\nabla f_s(VV^T)(U-V)}_F \\
&\leq 2\rho_1\norm{UU^T-VV^T}_F\norm{U}_F+2\rho_1\norm{VV^T-M^*}_F\norm{U-V}_F \\
&\leq 4\rho_1 r^{1/2}R\norm{U-V}_F+4\rho_1 R\norm{U-V}_F \\
&\leq 8\rho_1 r^{1/2}R\norm{U-V}_F.
\end{align*}
Similarly, for every $W \in \RR^{n \times r}$, we have
\begin{align*}
[\nabla^2g_s(U)](W,W)&-[\nabla^2g_s(V)](W,W) \\
&=[\nabla^2f_s(UU^T)](UW^T+WU^T,UW^T+WU^T) \\
&\hspace{1em}-[\nabla^2f_s(VV^T)](VW^T+WV^T,VW^T+WV^T) \\
&\hspace{1em}+2\langle\nabla f_s(UU^T)-\nabla f_s(VV^T),WW^T\rangle \\
&=[\nabla^2f_s(UU^T)-\nabla^2f_s(VV^T)](UW^T+WU^T,UW^T+WU^T) \\
&\hspace{1em}+[\nabla^2f_s(VV^T)](UW^T+WU^T,UW^T+WU^T) \\
&\hspace{1em}-[\nabla^2f_s(VV^T)](VW^T+WV^T,VW^T+WV^T) \\
&\hspace{1em}+2\langle\nabla f_s(UU^T)-\nabla f_s(VV^T),WW^T\rangle.
\end{align*}
There are four terms in the above expression. The first term can be upper bounded as
\begin{align*}
\mathcal A_1&:=[\nabla^2f_s(UU^T)-\nabla^2f_s(VV^T)](UW^T+WU^T,UW^T+WU^T) \\
&\leq \rho_2\norm{UU^T-VV^T}_F\norm{UW^T+WU^T}_F^2 \\
&\leq 4\rho_2\norm{UU^T-VV^T}_F\norm{U}_F^2\norm{W}_F^2 \\
&\leq 8\rho_2r^{3/4}R^{3/2}\norm{U-V}_F\norm{W}_F^2.
\end{align*}
Similarly, the sum of the second and third terms can be bounded as
\begin{align*}
\mathcal A_2&:=[\nabla^2f_s(VV^T)](UW^T+WU^T,UW^T+WU^T) \\
&\hspace{1em}-[\nabla^2f_s(VV^T)](VW^T+WV^T,VW^T+WV^T) \\
&=[\nabla^2f_s(VV^T)](UW^T+WU^T,(U-V)W^T+W(U-V)^T) \\
&\hspace{1em}+[\nabla^2f_s(VV^T)]((U-V)W^T+W(U-V)^T,VW^T+WV^T) \\
&\leq (1+2\delta)(\norm{UW^T+WU^T}_F+\norm{VW^T+WV^T}_F)\norm{(U-V)W^T+W(U-V)^T}_F \\
&\leq 4(1+2\delta)(\norm{U}_F+\norm{V}_F)\norm{U-V}_F\norm{W}_F^2 \\
&\leq 8\rho_1 r^{1/4}R^{1/2}\norm{U-V}_F\norm{W}_F^2,
\end{align*}
where Lemma~\ref{lem:rop2r} is applied in the second step. Moreover, we can upper bound the last term as
\begin{align*}
\mathcal A_3&:=2\langle\nabla f_s(UU^T)-\nabla f_s(VV^T),WW^T\rangle \\
&\leq 2\rho_1\norm{UU^T-VV^T}_F\norm{W}_F^2 \\
&\leq 4\rho_1 r^{1/4}R^{1/2}\norm{U-V}_F\norm{W}_F^2.
\end{align*}
Therefore,
\begin{align*}
[\nabla^2g_s(U)](W,W)-[\nabla^2g_s(V)](W,W)&=\mathcal A_1+\mathcal A_2+\mathcal A_3 \\
&\leq 4\rho_1 r^{1/4}R^{1/2}(2r^{1/2}R\rho_2/\rho_1+3)\norm{U-V}_F\norm{W}_F^2,
\end{align*}
which implies that the Hessian $\nabla^2g_s$ has the desired Lipschitz property.
\end{proof}

Next, we verify some facts about the augmented ground truth $\tilde M^*$ for the asymmetric problem, which will be useful in the transformation from asymmetric problems to symmetric problems.

\begin{lemma}
The augmented ground truth $\tilde M^*$ defined in \eqref{eq:auggroundtruth} is independent of the balanced factorization of the ground truth $M^*$. Furthermore,
\[
\norm{\tilde M^*}_F=2\norm{M^*}_F, \quad \sigma_r(\tilde M^*)=2\sigma_r(M^*).
\]
\end{lemma}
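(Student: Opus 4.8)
The plan is to reduce both numerical identities and the independence claim to the single algebraic consequence of the balance condition $U^{*T}U^*=V^{*T}V^*$. Write $W=\begin{bmatrix} U^* \\ V^* \end{bmatrix}$, so that
\[
\tilde M^*=WW^T=\begin{bmatrix} U^*U^{*T} & M^* \\ M^{*T} & V^*V^{*T} \end{bmatrix},
\]
which is symmetric and positive semidefinite. The off-diagonal blocks already depend only on $M^*$, so independence of the factorization follows once the diagonal blocks are shown to be determined by $M^*$. For this I would compute $M^*M^{*T}=U^*V^{*T}V^*U^{*T}=U^*(U^{*T}U^*)U^{*T}=(U^*U^{*T})^2$, where balance is used in the middle step; since $U^*U^{*T}\succeq 0$, the unique positive semidefinite square root gives $U^*U^{*T}=(M^*M^{*T})^{1/2}$, and symmetrically $V^*V^{*T}=(M^{*T}M^*)^{1/2}$. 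Hence $\tilde M^*$ is a function of $M^*$ alone, settling the independence claim.

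For the Frobenius identity I would expand the block norm as $\norm{\tilde M^*}_F^2=\norm{U^*U^{*T}}_F^2+2\norm{M^*}_F^2+\norm{V^*V^{*T}}_F^2$ and use the trace computation $\norm{U^*U^{*T}}_F^2=\tr\!\big(U^*(U^{*T}U^*)U^{*T}\big)=\tr\!\big(U^*(V^{*T}V^*)U^{*T}\big)=\tr(M^*M^{*T})=\norm{M^*}_F^2$, and the analogous one for the $V^*$ block. This yields $\norm{\tilde M^*}_F^2=4\norm{M^*}_F^2$, i.e.\ $\norm{\tilde M^*}_F=2\norm{M^*}_F$.

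For the singular values, since $\tilde M^*=WW^T$ is symmetric positive semidefinite its singular values equal its eigenvalues, and the nonzero eigenvalues of $WW^T$ coincide with those of $W^TW=U^{*T}U^*+V^{*T}V^*=2U^{*T}U^*$. It remains to relate the eigenvalues of $U^{*T}U^*$ to the singular values of $M^*$: from $M^*M^{*T}=U^*(U^{*T}U^*)U^{*T}$ together with the fact that $AB$ and $BA$ share nonzero eigenvalues (take $A=U^*(U^{*T}U^*)$, $B=U^{*T}$), one gets $\lambda_i(M^*M^{*T})=\lambda_i\!\big((U^{*T}U^*)^2\big)$, hence $\sigma_i(M^*)=\lambda_i(U^{*T}U^*)$ for $i=1,\dots,r$. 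Because $\rank(M^*)=r$ forces $U^*$ to have full column rank $r$, the matrix $\tilde M^*$ has exactly $r$ nonzero eigenvalues, namely $2\sigma_1(M^*)\ge\cdots\ge2\sigma_r(M^*)$, so $\sigma_r(\tilde M^*)=2\sigma_r(M^*)$.

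None of these steps is a real obstacle; the only point requiring care is the implication $M^*M^{*T}=(U^*U^{*T})^2\Rightarrow U^*U^{*T}=(M^*M^{*T})^{1/2}$, which relies on uniqueness of the positive semidefinite square root and is the sole place where the balance condition is essential. As an alternative for the independence part one can check directly that any two balanced factorizations of $M^*$ differ by a common orthogonal matrix $Q\in\RR^{r\times r}$, so $W\mapsto WQ$ leaves $WW^T$ unchanged; I would nonetheless prefer the square-root route since it also produces the explicit formula for $\tilde M^*$ used above.
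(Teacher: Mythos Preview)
Your argument is correct. The singular-value and Frobenius-norm computations are essentially the same as the paper's: both reduce to the chain $\sigma_i(M^*)=\lambda_i(U^{*T}U^*)$ via the balance condition and then observe that the nonzero eigenvalues of $\tilde M^*=WW^T$ are those of $W^TW=2U^{*T}U^*$. The paper packages this into a single string of equalities $\sigma_i(M^*)^2=\sigma_i(U^*V^{*T}V^*U^{*T})=\sigma_i((U^*U^{*T})^2)=\sigma_i(U^{*T}U^*)^2=\tfrac14\sigma_i(\tilde M^*)^2$, from which both identities drop out at once, whereas you treat the Frobenius norm and $\sigma_r$ separately; but the content is the same.

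Where you genuinely differ is in the independence claim. The paper invokes the inequality
\[
\norm{U_1U_1^T-U_2U_2^T}_F^2+\norm{V_1V_1^T-V_2V_2^T}_F^2 \le 2\norm{U_1V_1^T-U_2V_2^T}_F^2
\]
for balanced pairs (quoted from \citet{ZLTW2018}) and sets the right-hand side to zero. Your route---showing $M^*M^{*T}=(U^*U^{*T})^2$ and then invoking uniqueness of the positive semidefinite square root to obtain $U^*U^{*T}=(M^*M^{*T})^{1/2}$---is more self-contained, avoids the external reference, and as you note produces an explicit closed-form expression for $\tilde M^*$ in terms of $M^*$. The paper's argument, on the other hand, is a one-line application of a known bound and requires no discussion of square roots. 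Both are short; yours is arguably cleaner for a standalone presentation.
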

\begin{proof}
By expanding all the terms, it can be checked that the inequality
\[
\norm{U_1U_1^T-U_2U_2^T}_F^2+\norm{V_1V_1^T-V_2V_2^T}_F^2 \leq 2\norm{U_1V_1^T-U_2V_2^T}_F^2
\]
holds for all $U_1,U_2 \in \RR^{n \times r}$ and $V_1,V_2 \in \RR^{m \times r}$ with $U_1^TU_1=V_1^TV_1$ and $U_2^TU_2=V_2^TV_2$ (see Appendix~F in \citet{ZLTW2018}). Then, if $(U_1,V_1)$ and $(U_2,V_2)$ are two balanced factorizations of the ground truth $M^*$, we must have
\[
U_1U_1^T=U_2U_2^T, \quad V_1V_1^T=V_2V_2^T
\]
and thus $\tilde M^*$ is unique.

Assume that $(U^*,V^*)$ is a balanced factorization of $M^*$, the remaining equalities follow from the fact that
\begin{align*}
\sigma_i(M^*)^2&=\sigma_i(U^*V^{*T}V^*U^{*T})=\sigma_i(U^*U^{*T}U^*U^{*T}) \\
&=\sigma_i(U^*U^{*T})^2=\sigma_i(U^{*T}U^*)^2 \\
&=\frac{1}{4}\sigma_i(U^{*T}U^*+V^{*T}V^*)^2=\frac{1}{4}\sigma_i(\tilde M^*)^2
\end{align*}
for all $i \in \{1,\dots,r\}$.
\end{proof}

In the following, we will show that the gradient and the Hessian of the objective function $g_a$ in the transformed asymmetric problem \eqref{eq:asymtrans} satisfies the same Lipschitz property as in Lemma~\ref{lem:smoothness}. This means that those proofs in the remainder of this paper that depend on the Lipschitz property of $g_s$ can be applied to both the symmetric problem \eqref{eq:symrec} and the transformed asymmetric problem \eqref{eq:asymtrans}.

\begin{lemma}\label{lem:smoothnessasym}
The gradient $\nabla g_a$ and the Hessian $\nabla^2g_a$ in the transformed asymmetric problem \eqref{eq:asymtrans} satisfy the same Lipschitz property as in Lemma~\ref{lem:smoothness}.
\end{lemma}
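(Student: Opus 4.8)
The plan is to exploit that $g_a(X)=F(XX^T)$ has exactly the same compositional form as $g_s(X)=f_s(XX^T)$, with the composite loss $F$ playing the role of $f_s$, and then to re-run the computation in the proof of Lemma~\ref{lem:smoothness} essentially verbatim. Inspecting that proof shows it uses only four properties of $f_s$: (a) $\nabla f_s$ is $\rho_1$-restricted Lipschitz over matrices of rank at most $r$; (b) $\nabla^2 f_s$ is $\rho_2$-restricted Lipschitz with rank-$\le r$ base point and rank-$\le 2r$ direction; (c) the bilinear bound $\abs{[\nabla^2 f_s(M)](K,L)}\le(1+2\delta)\norm{K}_F\norm{L}_F$ for low-rank $M,K,L$, which in the symmetric case is supplied by Lemma~\ref{lem:rop2r}; and (d) the estimate $\norm{\nabla f_s(VV^T)}_F\le 2\rho_1 R$ on $\mathcal D$, obtained there from $\nabla f_s(M^*)=0$. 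So it suffices to verify (a)--(d) for $F$; the absolute constants then come out identical, since in each step of the original argument $1+2\delta$ is immediately relaxed to $\rho_1$ via Assumption~\ref{ass:rip}.

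To check (a)--(c), I would read off from the explicit formula \eqref{eq:asymtransfunc} that $F$ equals $\tfrac12 f_a$ composed with the block-extraction maps $N\mapsto N_{12}$ and $N\mapsto N_{21}^T$, plus a quadratic form with coefficient $\phi=(1-\delta)/2$ on the four blocks. Block extraction is norm-nonincreasing and cannot increase rank, so $\nabla^2 f_a$ evaluated at a block of a rank-$\le r$ matrix still satisfies $\delta$-$\RIP_{2r}$ as a quadratic form and Lemma~\ref{lem:rop2r} applies to it, while $\nabla f_a$ restricted to such blocks retains the constant $\rho_1$. The quadratic part contributes only terms of magnitude $\le\tfrac\phi2$, and Assumptions~\ref{ass:rip} and~\ref{ass:regcoeff} give $\phi<1\le 1+2\delta\le\rho_1$, so summing the block contributions yields: $\nabla F$ is $\tfrac{\rho_1+\phi}{2}$- (hence $\rho_1$-) restricted Lipschitz, $\nabla^2 F$ is $\tfrac{\rho_2}{\sqrt2}$- (hence $\rho_2$-) restricted Lipschitz, and $\abs{[\nabla^2 F(M)](K,L)}\le\tfrac{1+2\delta+\phi}{2}\norm{K}_F\norm{L}_F=\tfrac{3(1+\delta)}{4}\norm{K}_F\norm{L}_F\le(1+2\delta)\norm{K}_F\norm{L}_F$. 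For (d), using $\nabla f_a(M^*)=0$ gives $\norm{\nabla f_a(N_{12})}_F\le\rho_1(\norm{N_{12}}_F+\norm{M^*}_F)$, and combining the four blocks with $\norm{N_{11}}_F^2+\norm{N_{22}}_F^2+2\norm{N_{12}}_F^2=\norm{N}_F^2\le R^2$ and $\norm{M^*}_F\le D\le R$ gives $\norm{\nabla F(N)}_F\le 2\rho_1 R$ on $\mathcal D$. Feeding (a)--(d) into the four-term decomposition of $[\nabla^2 g_a(U)](W,W)-[\nabla^2 g_a(V)](W,W)$ and the gradient estimate then reproduces the restricted Lipschitz constants $8\rho_1 r^{1/2}R$ for $\nabla g_a$ and $4\rho_1 r^{1/4}R^{1/2}(2r^{1/2}R\rho_2/\rho_1+3)$ for $\nabla^2 g_a$ over $\mathcal D$. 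Equivalently, one may pass through the rescaled loss $cF$ with $c=4/(1+\delta)$, which---as noted after Assumption~\ref{ass:regcoeff}---does satisfy the genuine $\tfrac{2\delta}{1+\delta}$-$\RIP_{2r}$ property along with analogues of Assumptions~\ref{ass:smooth}--\ref{ass:rip}, apply Lemma~\ref{lem:smoothness} to $(cF)(XX^T)=c\,g_a(X)$, and divide by $c$.

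The main obstacle is bookkeeping rather than conceptual: $F$ itself does \emph{not} satisfy $\RIP_{2r}$ in the sense of Definition~\ref{def:rip}---on the off-diagonal blocks its Hessian is centered near $(1+\delta)/4$ rather than $1$---so one cannot quote Lemma~\ref{lem:smoothness} as a black box with $f_s=F$, but must instead isolate precisely which consequence of RIP the proof uses (only the bilinear bound of Lemma~\ref{lem:rop2r}, which does transfer to the blocks) or route through $cF$. A secondary subtlety is that, unlike $\nabla f_s(M^*)=0$, the gradient $\nabla F(\tilde M^*)$ is nonzero (its diagonal blocks equal $\tfrac\phi2 U^*U^{*T}$ and $\tfrac\phi2 V^*V^{*T}$), so the bound on $\norm{\nabla F(VV^T)}_F$ in (d) must be obtained by the direct block-wise estimate above instead of a single Lipschitz step from the ground truth; since $\norm{\nabla F(\tilde M^*)}_F=\phi\norm{M^*}_F\le D$, this is harmless for the final constants.
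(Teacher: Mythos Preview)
Your proposal is correct and follows essentially the same route as the paper: both first verify that $\nabla F$ and $\nabla^2 F$ inherit the restricted Lipschitz constants $\rho_1$ and (at most) $\rho_2$ from $f_a$ via the block formula for $F$, and then re-run the computation of Lemma~\ref{lem:smoothness}, with the paper invoking the rescaling $4F/(1+\delta)$ (your second option) to supply the RIP-based bilinear Hessian bound. If anything, you are more careful than the paper on one point: you explicitly flag that $\nabla F(\tilde M^*)\neq 0$, so the step $\norm{\nabla f_s(VV^T)}_F\le \rho_1\norm{VV^T-M^*}_F$ in the original proof cannot be copied verbatim and must be replaced by the direct blockwise estimate you give for (d); the paper's ``repeat the argument'' glosses over this.
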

\begin{proof}
Consider arbitrary low-rank matrices $N,N',K \in \RR^{(n+m) \times (n+m)}$ written in block forms in the same way as in \eqref{eq:asymtransfunc}, with $\rank(N) \leq r$, $\rank(N') \leq r$ and $\rank(K) \leq 2r$. First, we will prove that the gradient $\nabla F$ and the Hessian $\nabla^2F$ of the transformed function $F$ are still $\rho_1$-restricted Lipschitz continuous and $\rho_2$-restricted Lipschitz continuous, respectively. Given the gradient
\[
\nabla F(N)=\frac{1}{2}\begin{bmatrix}
0 & \nabla f_a(N_{12}) \\
(\nabla f_a(N_{21}^T))^T & 0
\end{bmatrix}+\frac{\phi}{2}\begin{bmatrix}
N_{11} & -N_{12} \\
-N_{21} & N_{22}
\end{bmatrix},
\]
we have
\begin{align*}
\norm{\nabla F(N)&-\nabla F(N')}_F \\
&\leq \frac{1}{2}\sqrt{\norm{\nabla f_a(N_{12})-\nabla f_a(N_{12}')}_F^2+\norm{\nabla f_a(N_{21}^T)-\nabla f_a(N_{21}'^T)}_F^2}+\frac{\phi}{2}\norm{N-N'}_F \\
&\leq \frac{\rho_1}{2}\sqrt{\norm{N_{12}-N'_{12}}_F^2+\norm{N_{21}-N'_{21}}_F^2}+\frac{\phi}{2}\norm{N-N'}_F \\
&\leq \frac{1}{2}(\rho_1+\phi)\norm{N-N'}_F \leq \rho_1\norm{N-N'}_F,
\end{align*}
in which the second inequality is due to the $\rho_1$-restricted Lipschitz continuity of $\nabla f_a$, while the last inequality follows from the choice $\phi=(1-\delta)/2$ in Assumption~\ref{ass:regcoeff} and $\rho_1 \geq 1+2\delta$ in Assumption~\ref{ass:rip}. Moreover, since
\begin{align*}
[\nabla^2F(N)](K,K)&=\frac{1}{2}([\nabla^2f_a(N_{12})](K_{12},K_{12})+[\nabla^2f_a(N_{21}^T)](K_{21}^T,K_{21}^T)) \\
&+\frac{\phi}{2}(\norm{K_{11}}_F^2+\norm{K_{22}}_F^2-\norm{K_{12}}_F^2-\norm{K_{21}}_F^2),
\end{align*}
it is clear that $\nabla^2F$ is $\rho_2/2$-restricted Lipschitz continuous as the second term in the above equation is independent of $N$. Next, we can repeat the argument in Lemma~\ref{lem:smoothness} with the function $f_s$ replaced with $4F/(1+\delta)$, noting that the latter function satisfies the $2\delta/(1+\delta)$-$\RIP_{2r}$ property as proven in Theorem~12 of \citet{ZBL2021-p}.
\end{proof}

Using the Lipschitz properties proven in Lemma~\ref{lem:smoothness}, we will show that the objective value decreases at each iteration of the gradient descent algorithm with a sufficiently small step size $\eta$. Although the following lemma is stated for the symmetric problem \eqref{eq:symrec}, a similar result holds for the transformed asymmetric problem \eqref{eq:asymtrans}.

\begin{lemma}\label{lem:onestep}
Given a matrix $X \in \RR^{n \times r}$ satisfying
\[
\norm{XX^T-M^*}_F \leq R,
\]
let $X'=X-\eta\nabla g_s(X)$ be the result of a one-step gradient descent applied to the symmetric problem \eqref{eq:symrec} with the step size $\eta$ satisfying
\[
1/\eta \geq 12\rho_1 r^{1/2}(R+D)
\]
Then, $g_s(X') \leq g_s(X)-\eta\norm{\nabla g_s(X)}_F^2/2$.
\end{lemma}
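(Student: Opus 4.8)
The statement is the standard sufficient-decrease (descent) lemma for gradient descent, so my plan is to combine the elementary descent inequality with the restricted Lipschitz bound from Lemma~\ref{lem:smoothness}. First, since $\norm{M^*}_F\le D$, the hypothesis $\norm{XX^T-M^*}_F\le R$ gives $\norm{XX^T}_F\le R+D$. I would then fix a region $\mathcal D=\{Y\in\RR^{n\times r}\mid\norm{YY^T}_F\le R'\}$ whose radius $R'$ is a suitable constant multiple of $R+D$ (with $R'>D$ so that Lemma~\ref{lem:smoothness} applies); the point of the factor $12$ in the step-size hypothesis, as opposed to the factor $8$ in Lemma~\ref{lem:smoothness}, is precisely to leave slack so that the resulting restricted Lipschitz constant $L=8\rho_1 r^{1/2}R'$ still satisfies $\eta L\le 1$, while $R'$ is large enough to contain the gradient step. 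Note that $X\in\mathcal D$ by the bound just derived.

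The core computation is then the usual one-dimensional argument along the step. Writing $X'=X-\eta\nabla g_s(X)$ and $Y_t=X-t\eta\nabla g_s(X)$ for $t\in[0,1]$, and assuming for the moment that the whole segment $\{Y_t\}_{t\in[0,1]}$ stays in $\mathcal D$, the fundamental theorem of calculus gives
\[
g_s(X')-g_s(X)=\langle\nabla g_s(X),X'-X\rangle+\int_0^1\langle\nabla g_s(Y_t)-\nabla g_s(X),X'-X\rangle\,\dd t,
\]
and bounding the integrand by $L\norm{Y_t-X}_F\norm{X'-X}_F=Lt\eta\norm{\nabla g_s(X)}_F^2$ yields
\[
g_s(X')\le g_s(X)-\eta\Bigl(1-\tfrac{\eta L}{2}\Bigr)\norm{\nabla g_s(X)}_F^2\le g_s(X)-\tfrac{\eta}{2}\norm{\nabla g_s(X)}_F^2,
\]
using $\eta L\le 1$ in the last step. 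Thus the whole lemma reduces to the claim that the segment never leaves $\mathcal D$.

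I expect the segment-containment claim to be the main obstacle, and I would handle it by a continuity (bootstrap) argument. Let $\tau^*=\sup\{\tau\in[0,1]\mid Y_t\in\mathcal D\text{ for all }t\in[0,\tau]\}$; since $\norm{Y_0Y_0^T}_F=\norm{XX^T}_F\le R+D<R'$, continuity of $t\mapsto\norm{Y_tY_t^T}_F$ gives $\tau^*>0$, and if $\tau^*<1$ then necessarily $\norm{Y_{\tau^*}Y_{\tau^*}^T}_F=R'$. But on $[0,\tau^*]$ the Lipschitz bound already holds, so the displayed computation applied to this subsegment shows $g_s$ is non-increasing along it; in particular $g_s(Y_{\tau^*})\le g_s(X)$, and since $Y_{\tau^*}Y_{\tau^*}^T$ and $XX^T$ have rank at most $r$, the level-set comparison \eqref{eq:levelset} (a consequence of the $\delta$-$\RIP_{2r}$ property) gives
\[
\norm{Y_{\tau^*}Y_{\tau^*}^T-M^*}_F\le\sqrt{\tfrac{1+\delta}{1-\delta}}\,\norm{XX^T-M^*}_F,
\]
hence $\norm{Y_{\tau^*}Y_{\tau^*}^T}_F\le\sqrt{(1+\delta)/(1-\delta)}\,\norm{XX^T-M^*}_F+D$, which stays strictly below $R'$ for the calibrated choice of $R'$. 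This contradicts $\norm{Y_{\tau^*}Y_{\tau^*}^T}_F=R'$, so $\tau^*=1$ and the segment lies in $\mathcal D$, which finishes the proof. The delicate part is exactly this: unlike the $(\alpha,\beta)$-regularity setting, here we only have a weak PL-type handle on $g_s$, so one cannot directly assert that a gradient step remains in a favorable region; it is the near-monotonicity of $\norm{XX^T-M^*}_F$ along objective-decreasing directions supplied by \eqref{eq:levelset}, together with the slack between the factors $12$ and $8$, that makes the containment close.
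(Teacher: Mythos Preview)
Your overall architecture—the one-dimensional descent integral combined with a containment argument for the segment $\{Y_t\}$—matches the paper's. The gap is in the containment step. Your bootstrap via the level-set inequality \eqref{eq:levelset} yields
\[
\norm{Y_{\tau^*}Y_{\tau^*}^T}_F \le \sqrt{\tfrac{1+\delta}{1-\delta}}\,R + D,
\]
while to invoke Lemma~\ref{lem:smoothness} with Lipschitz constant $L=8\rho_1 r^{1/2}R'$ satisfying $\eta L\le 1$ you are forced to take $R'\le\tfrac{3}{2}(R+D)$. Thus you need
\[
\sqrt{\tfrac{1+\delta}{1-\delta}}\,R + D < \tfrac{3}{2}(R+D),
\]
which fails as soon as $\sqrt{(1+\delta)/(1-\delta)}>3/2$ (i.e.\ $\delta>5/13$) and $R$ is not small compared to $D$. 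The lemma, however, is stated and used for arbitrary $\delta\in[0,1)$ and arbitrary $R>0$ (for instance in Lemma~\ref{lem:lipschitzbound} with $R=3\tfrac{1+\delta}{1-\delta}D$), so the ``calibrated choice of $R'$'' you allude to does not exist in general.

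The paper avoids this by not bootstrapping at all: it exploits the explicit form $\nabla g_s(X)=2\nabla f_s(XX^T)X$ together with $\nabla f_s(M^*)=0$ and the $\rho_1$-restricted Lipschitz continuity of $\nabla f_s$ to get $\norm{\nabla f_s(XX^T)}_F\le\rho_1 R$, and then directly expands
\[
\norm{\tilde X(t)\tilde X(t)^T-M^*}_F \le R + 4t\eta\,\norm{\nabla f_s(XX^T)XX^T}_F + 4t^2\eta^2\,\norm{\nabla f_s(XX^T)XX^T\nabla f_s(XX^T)^T}_F,
\]
bounding each term by products of $\rho_1 R$ and $\norm{XX^T}_F\le R+D$. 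Since $\eta\rho_1 R\le 1/12$, this yields $\norm{\tilde X(t)\tilde X(t)^T-M^*}_F\le\tfrac{3}{2}(R+D)$ for every $t\in[0,1]$ outright, with no dependence on $\delta$. Lemma~\ref{lem:smoothness} then applies with radius $\tfrac{3}{2}(R+D)$, giving exactly $L=12\rho_1 r^{1/2}(R+D)\le 1/\eta$. The key missing idea in your proposal is to bound the \emph{size} of the gradient step directly from the Lipschitz property of $\nabla f_s$ (not $\nabla g_s$), rather than relying on the RIP-based level-set comparison, which inserts an unwanted $\sqrt{(1+\delta)/(1-\delta)}$ factor.
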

\begin{proof}
The assumption on $\eta$ implies that $\eta\rho_1 R \leq 1/12$. Define $\tilde X(t)=X-t\eta\nabla g_s(X)$ for $t \in [0,1]$. We have $\tilde X(1)=X'$ and
\begin{align*}
\norm{\tilde X(t)&\tilde X(t)^T-M^*}_F \leq 2t\eta\norm{\nabla g_s(X)X^T}_F+t^2\eta^2\norm{\nabla g_s(X)\nabla g_s(X)^T}_F+\norm{XX^T-M^*}_F \\
&\leq 4t\eta\norm{\nabla f_s(XX^T)XX^T}_F+4t^2\eta^2\norm{\nabla f_s(XX^T)XX^T\nabla f_s(XX^T)^T}+R \\
&\leq 4\eta\rho_1\norm{XX^T-M^*}_F\norm{XX^T}_F+4\eta^2\rho_1^2\norm{XX^T-M^*}_F^2\norm{XX^T}_F+R \\
&\leq 4\eta\rho_1 R(R+D)(1+\eta\rho_1 R)+R \leq \frac{3}{2}(R+D).
\end{align*}
By the Lipschitz property of the function $g_s$ proven in Lemma~\ref{lem:smoothness} and the assumption on $\eta$, we have
\[
\norm{\nabla g_s(\tilde X(t))-\nabla g_s(X)}_F \leq 12\rho_1 r^{1/2}(R+D)\norm{\tilde X(t)-X}_F \leq t\norm{\nabla g_s(X)}_F.
\]
Now, one can write
\begin{align*}
g_s(X')-g_s(X)&=\int_0^1\langle\nabla g_s(\tilde X(t)),X'-X\rangle\dd t \\
&=-\eta\norm{\nabla g_s(X)}_F^2+\eta\int_0^1\langle\nabla g_s(X)-\nabla g_s(\tilde X(t)),\nabla g_s(X)\rangle\dd t \\
&\leq -\eta\norm{\nabla g_s(X)}_F^2+\frac{\eta}{2}\norm{\nabla g_s(X)}_F^2.
\end{align*}
As a result, $g_s(X') \leq g_s(X)-\eta\norm{\nabla g_s(X)}_F^2/2$.
\end{proof}

\section{Proofs for Section~\ref{sec:local}}\label{app:localproof}

First, we need to introduce some notations that will be used throughout this section and next two sections. For every $X \in \RR^{n \times r}$, define
\[
\mathbf e:=\vect(XX^T-M^*)
\]
and let $\mathbf X \in \RR^{n^2 \times nr}$ be the matrix satisfying
\[
\mathbf X\vect(U)=\vect(XU^T+UX^T), \quad \forall U \in \RR^{n \times r}.
\]
The following lemma is the key to the analysis of optimality conditions for the spurious local minima of the symmetric problem \eqref{eq:symrec}, which will be used in both this and next sections.

\begin{lemma}\label{lem:gradhessian}
For every $X \in \RR^{n \times r}$, there exists a symmetric matrix $\mathbf H \in \RR^{n^2 \times n^2}$ satisfying the $\delta$-$\RIP_{2r}$ property such that
\begin{gather*}
\norm{\mathbf X^T\mathbf H\mathbf e} \leq \norm{\nabla g_s(X)}_F, \\
2I_r \otimes \mat_S(\mathbf H\mathbf e)+(1+\delta)\mathbf X^T\mathbf X \succeq \lambda_{\min}(\nabla^2g_s(X))I_{nr}.
\end{gather*}
\end{lemma}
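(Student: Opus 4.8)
The plan is to compute the gradient and Hessian of $g_s(X) = f_s(XX^T)$ explicitly in terms of $\nabla f_s$ and $\nabla^2 f_s$ at the point $M = XX^T$, and then replace the quadratic form $\nabla^2 f_s(M)$ by a single matrix $\mathbf H$ that captures both its first-order and second-order behavior relevant to these two inequalities. Recall that by the chain rule $\nabla g_s(X) = 2\,\nabla f_s(XX^T)\,X$, so that for any $U \in \RR^{n \times r}$ we have $\langle \nabla g_s(X), U\rangle = \langle \nabla f_s(XX^T), XU^T + UX^T\rangle$. In vectorized form, writing $\mathbf f := \vect(\nabla f_s(XX^T))$, this reads $\vect(\nabla g_s(X))^T \vect(U) = \mathbf f^T \mathbf X \vect(U)$ for all $U$; hence $\norm{\nabla g_s(X)}_F = \norm{\mathbf X^T \mathbf f}$. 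Similarly, differentiating once more, for $U \in \RR^{n\times r}$,
\[
[\nabla^2 g_s(X)](U,U) = [\nabla^2 f_s(XX^T)](XU^T+UX^T,\,XU^T+UX^T) + 2\langle \nabla f_s(XX^T), UU^T\rangle,
\]
and the second term equals $\vect(U)^T\big(2 I_r \otimes \mat_S(\mathbf f)\big)\vect(U)$ after accounting for the Kronecker structure of $U \mapsto UU^T$ against the symmetric part of $\nabla f_s$.

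The key device is to relate $\nabla f_s(XX^T)$ to the residual $\mathbf e = \vect(XX^T - M^*)$ through the RIP-governed Hessian. First I would apply Taylor's theorem with integral remainder to the gradient: since $\nabla f_s(M^*)$ need not vanish in the general-objective setting, I should instead use that $M^*$ is a global minimizer so that $\langle \nabla f_s(M^*), K\rangle = 0$ for all $K$ with $\rank(K) \le 2r$ that are ``tangent'' in the appropriate sense — more carefully, one writes $\nabla f_s(XX^T) = \nabla f_s(M^*) + \big(\int_0^1 \nabla^2 f_s(M^* + t(XX^T - M^*))\,dt\big)(XX^T - M^*)$, and defines $\mathbf H$ to be the vectorized matrix representation of the averaged Hessian $\int_0^1 \nabla^2 f_s(M^* + t(XX^T-M^*))\,dt$. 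This $\mathbf H$ is symmetric and, because each $\nabla^2 f_s(\cdot)$ in the integrand satisfies $\delta$-$\RIP_{2r}$ (the RIP bound is uniform in the base point), so does the average $\mathbf H$ by convexity of the constraint set $\{(1-\delta)\norm{K}_F^2 \le [\mathcal Q](K,K) \le (1+\delta)\norm{K}_F^2\}$. Then $\mathbf f = \mathbf H \mathbf e + \vect(\nabla f_s(M^*))$, and the term involving $\nabla f_s(M^*)$ must be handled: when paired with $\mathbf X$ or with $\mat_S$, it contributes nothing because $XU^T + UX^T$ and $UU^T$ both have rank at most $2r$ and $M^*$ is a global minimizer (optimality of $M^*$ under the rank constraint forces $\langle \nabla f_s(M^*), K\rangle = 0$ for all symmetric $K$ of rank $\le 2r$ with range contained in that of $M^*$; but for a genuinely unconstrained-in-PSD-cone minimizer this holds for all $K$, so this needs to be justified from the problem setup — this is precisely the subtle point).

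Once the substitution $\mathbf f \leftrightarrow \mathbf H \mathbf e$ is legitimate, the first inequality $\norm{\mathbf X^T \mathbf H \mathbf e} \le \norm{\nabla g_s(X)}_F$ becomes an equality (or inequality absorbing the $\nabla f_s(M^*)$ term), and the second inequality follows by observing that $[\nabla^2 g_s(X)](U,U) = \vect(U)^T\big(2 I_r \otimes \mat_S(\mathbf H \mathbf e) + \mathbf X^T \widetilde{\mathbf H} \mathbf X\big)\vect(U)$ where $\widetilde{\mathbf H}$ is the RIP-form appearing in the Hessian quadratic term, and then bounding $\mathbf X^T \widetilde{\mathbf H}\mathbf X \preceq (1+\delta)\mathbf X^T \mathbf X$ using the upper RIP bound on $\widetilde{\mathbf H}$ applied to rank-$\le 2r$ matrices of the form $XU^T + UX^T$; comparing with $\lambda_{\min}(\nabla^2 g_s(X)) I_{nr} \preceq \nabla^2 g_s(X)$ gives the claim. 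I expect the \textbf{main obstacle} to be the bookkeeping around the two (a priori different) appearances of $\nabla^2 f_s$ — the averaged Hessian in the gradient expansion versus the exact Hessian in the second-derivative term — and in particular whether a \emph{single} matrix $\mathbf H$ can serve both roles; the resolution is likely that one only needs $\mathbf H$ for the $\mathbf H\mathbf e$ combination (gradient and the $\mat_S$ term) while the pure-Hessian quadratic term $\mathbf X^T\mathbf X$ is bounded separately via RIP, which is exactly the asymmetric way the statement is phrased ($(1+\delta)\mathbf X^T\mathbf X$ rather than $\mathbf X^T\mathbf H\mathbf X$). A secondary technical point is verifying $\rank(\mat_S(\mathbf H\mathbf e)) $ and $\rank(XU^T+UX^T)$ are at most $2r$ so that RIP applies, which is immediate since $\mathbf e$ is (the vectorization of) a rank-$\le 2r$ matrix and $XU^T + UX^T$ has rank $\le 2r$.
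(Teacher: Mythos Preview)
Your proposal is correct and follows essentially the same route as the paper: define $\mathbf H$ as the integrated Hessian $\int_0^1 \nabla^2 f_s\big((1-t)XX^T+tM^*\big)\,dt$, note it inherits the $\delta$-$\RIP_{2r}$ property, use it to write $\vect(\nabla f_s(XX^T))=\mathbf H\mathbf e$, and then bound the exact Hessian term $[\nabla^2 f_s(XX^T)](XU^T+UX^T,XU^T+UX^T)$ separately by $(1+\delta)\norm{XU^T+UX^T}_F^2$ via RIP.

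Your one point of hesitation is unnecessary: in the paper's setup $M^*$ is declared to be a global minimizer of $f_s$ over \emph{all} matrices (not merely over the rank-$r$ feasible set), so $\nabla f_s(M^*)=0$ outright and no tangent-cone argument is needed. With that, $\mathbf f=\mathbf H\mathbf e$ exactly, the first inequality is in fact an equality $\norm{\mathbf X^T\mathbf H\mathbf e}=\norm{\nabla g_s(X)}_F$, and your identified ``main obstacle'' (two different Hessians) is resolved precisely as you guessed---$\mathbf H$ is used only through $\mathbf H\mathbf e$, while the pure second-order term is handled by the upper RIP bound, which is why the statement has $(1+\delta)\mathbf X^T\mathbf X$ rather than $\mathbf X^T\mathbf H\mathbf X$.
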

\begin{proof}
For given matrix $N \in \RR^{n \times n}$, define an auxiliary function $h_N:\RR^{n \times n} \to \RR$ by letting
\[
h_N(M)=\langle\nabla f_s(M),N\rangle, \quad \forall M \in \RR^{n \times n}.
\]
The mean value theorem over the function $h_N$ implies that
\begin{equation}\label{eq:meanvalue}
\begin{aligned}
\langle\nabla f_s(XX^T),N\rangle&=h_N(XX^T)-h_N(M^*) \\
&=\int_0^1\langle\nabla h_N((1-t)XX^T+tM^*),XX^T-M^*\rangle\dd t \\
&=\int_0^1[\nabla^2f_s((1-t)XX^T+tM^*)](XX^T-M^*,N)\dd t \\
&=\mathbf e^T\mathbf H\vect(N),
\end{aligned}
\end{equation}
where $\mathbf H \in \RR^{n^2 \times n^2}$ is the symmetric matrix that is independent of $N$ and satisfies
\[
(\vect(K))^T\mathbf H\vect(L)=\int_0^1[\nabla^2f_s((1-t)XX^T+tM^*)](K,L)\dd t
\]
for all $K,L \in \RR^{n \times n}$. Moreover, since $\nabla^2f_s((1-t)XX^T+tM^*)$ satisfies the $\delta$-$\RIP_{2r}$ property for all $t \in [0,1]$, $\mathbf H$ also satisfies the $\delta$-$\RIP_{2r}$. Now, we will prove the desired inequalities after choosing $\mathbf H$ as above.

First, let $U \in \RR^{n \times r}$ be the matrix satisfying $\vect(U)=\mathbf X^T\mathbf H\mathbf e$ and $N=XU^T+UX^T$. Then, by the equation \eqref{eq:meanvalue},
\[
\norm{\mathbf X^T\mathbf H\mathbf e}^2=\mathbf e^T\mathbf H\mathbf X\vect(U)=\mathbf e^T\mathbf H\vect(N)=\langle\nabla f_s(XX^T),N\rangle=\langle\nabla g_s(X),U\rangle \leq \norm{\nabla g_s(X)}_F\norm{U}_F,
\]
which arrives at the first inequality to be proved. Next, for every $U \in \RR^{n \times r}$ with $\mathbf U=\vect(U)$, the equation \eqref{eq:meanvalue} with $N=UU^T$ gives
\[
\langle\nabla f_s(XX^T),UU^T\rangle=\mathbf e^T\mathbf H\vect(UU^T)=\frac{1}{2}\mathbf U^T\vect((W+W^T)U)=\mathbf U^T(I_r \otimes \mat_S(\mathbf H\mathbf e))\mathbf U,
\]
in which $W \in \RR^{n \times n}$ is the unique matrix satisfying $\vect(W)=\mathbf H\mathbf e$. Therefore,
\begin{align*}
\lambda_{\min}(\nabla^2g_s(X))\norm{\mathbf U}^2 \leq [\nabla^2g_s(X)](U,U)&=[\nabla^2f_s(XX^T)](XU^T+UX^T,XU^T+UX^T)+2\langle\nabla f_s(XX^T),UU^T\rangle \\
&\leq (1+\delta)\norm{XU^T+UX^T}_F^2+2\langle\nabla f_s(XX^T),UU^T\rangle \\
&=(1+\delta)\mathbf U^T\mathbf X^T\mathbf X\mathbf U+2\mathbf U^T(I_r \otimes \mat_S(\mathbf H\mathbf e))\mathbf U,
\end{align*}
in which the second inequality is due to the $\delta$-$\RIP_{2r}$ property of the function $f_s$. This leads to the second inequality to be proved.
\end{proof}

The following lemma borrowed from \citet{BNS2016} will also be useful.

\begin{lemma}\label{lem:normcompare}
Let $X,Z \in \RR^{n \times r}$ be two arbitrary matrices such that $X^TZ$ is symmetric and positive semidefinite. It holds that
\[
\sigma_r(ZZ^T)\norm{X-Z}_F^2 \leq \frac{1}{2(\sqrt 2-1)}\norm{XX^T-ZZ^T}_F^2.
\]
\end{lemma}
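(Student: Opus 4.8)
\emph{Proof proposal.} The plan is to set $\Delta := X - Z$, reduce the inequality to one about $r\times r$ matrices, and finally to a scalar inequality between eigenvalues. If $\sigma_r(ZZ^T)=0$ the left-hand side vanishes and there is nothing to prove, so I assume $\sigma_r(ZZ^T)>0$; then $\rank(Z)=r\leq n$ and I write $\lambda:=\sigma_r(ZZ^T)=\lambda_{\min}(Z^TZ)>0$.

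First I would expand $XX^T-ZZ^T=Z\Delta^T+\Delta Z^T+\Delta\Delta^T$ and, using the cyclic property of the trace, establish the identity
\[
\norm{XX^T-ZZ^T}_F^2 = 2\tr(PQ)+4\tr(SQ)+2\norm{S}_F^2+\norm{Q}_F^2,
\]
where $P:=Z^TZ$, $Q:=\Delta^T\Delta$ and $S:=Z^T\Delta$. A direct computation gives $Z^TX=P+S$; since by hypothesis $X^TZ$ (equivalently $Z^TX$) is symmetric and positive semidefinite, $S$ is symmetric and $S+P\succeq 0$. Together with $P\succeq\lambda I_r$ and $\norm{X-Z}_F^2=\tr(Q)$, the assertion of the lemma becomes
\[
2\tr(PQ)+4\tr(SQ)+2\norm{S}_F^2+\norm{Q}_F^2 \;\geq\; 2(\sqrt 2-1)\,\lambda\,\tr(Q).
\]

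Next I would simultaneously conjugate $P,Q,S$ by an orthogonal matrix diagonalizing $Q$; this leaves every term above and every constraint ($P\succeq\lambda I_r$, $S=S^T$, $S+P\succeq 0$) unchanged, so I may assume $Q$ is diagonal with entries $q_1,\dots,q_r\geq 0$. Discarding the nonnegative off-diagonal part of $\norm{S}_F^2$, the left-hand side is at least $\sum_{i=1}^r\bigl(2P_{ii}q_i+4S_{ii}q_i+2S_{ii}^2+q_i^2\bigr)$, where the diagonal entries of $P-\lambda I_r\succeq0$ and of $S+P\succeq0$ force $P_{ii}\geq\lambda$ and $S_{ii}\geq-P_{ii}$. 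It therefore suffices to prove the scalar bound $2pq+4sq+2s^2+q^2\geq 2(\sqrt2-1)\lambda q$ for all $p\geq\lambda$, $s\geq-p$, $q\geq 0$, and then sum over $i$. Here I would minimize the left-hand side over $s$ (the unconstrained minimizer is $s=-q$, clipped to $s=-p$ when $q>p$) and then over $p\geq\lambda$; the verification splits into a few cases, all but one of which give strict slack, and in the tight case one is left to check $2\lambda^2-2\lambda q+q^2\geq 2(\sqrt2-1)\lambda q$, which after dividing by $\lambda^2$ and setting $t=q/\lambda$ is exactly $t^2-2\sqrt2\,t+2=(t-\sqrt2)^2\geq 0$ — this also explains why the constant $\frac{1}{2(\sqrt2-1)}$ is sharp.

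I expect the trace identity to be routine algebra, and the real work to be the scalar optimization in the last step together with verifying that conjugating $Q$ to diagonal form genuinely preserves the entrywise consequences of $P\succeq\lambda I_r$ and $S+P\succeq0$. The main obstacle is organizing the minimization over $(s,p)$ so that the clipped regime $q>p$ (where $s=-p$) is handled correctly, since that is precisely the regime producing the extremal constant.
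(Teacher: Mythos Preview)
Your argument is correct. The trace identity
\[
\norm{XX^T-ZZ^T}_F^2 = 2\tr(PQ)+4\tr(SQ)+2\norm{S}_F^2+\norm{Q}_F^2
\]
does hold once you use that $S=Z^T\Delta$ is symmetric (which follows, as you say, from $Z^TX=P+S$ being symmetric). Diagonalizing $Q$ by an orthogonal conjugation, dropping the off-diagonal part of $\norm{S}_F^2$, and using only the diagonal consequences $P_{ii}\ge\lambda$ and $S_{ii}\ge -P_{ii}$ of the matrix constraints $P\succeq\lambda I_r$ and $S+P\succeq 0$ is a legitimate relaxation for a lower bound. The scalar minimization then goes through exactly as you outline: with $f(s)=2s^2+4qs+2pq+q^2$ one gets $\min_{s\ge -p}f(s)=2pq-q^2$ for $q\le p$ and $2p^2-2pq+q^2$ for $q>p$; minimizing further over $p\ge\lambda$ yields, in the worst regime $\lambda<q<2\lambda$, precisely $2\lambda^2-2\lambda q+q^2\ge 2(\sqrt2-1)\lambda q$, equivalent to $(q/\lambda-\sqrt2)^2\ge0$. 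The other regimes have strict slack, so the constant $1/(2(\sqrt2-1))$ is indeed sharp.

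As for comparison: the paper does not supply its own proof of this lemma; it is quoted verbatim from \citet{BNS2016}. Your self-contained elementary argument is therefore a genuine addition rather than a reworking of anything in the paper. The reduction to an $r\times r$ problem via $P,Q,S$ and then to a one-dimensional optimization is essentially the standard route for this type of inequality (the original reference proceeds similarly), and your write-up is clean; the only thing I would tighten in a final version is to state explicitly that you are relaxing the joint constraint $\begin{pmatrix}P & S\\ S & Q\end{pmatrix}\succeq 0$ (which $P,Q,S$ actually satisfy) to the weaker diagonal conditions, so the reader sees at once why the entrywise bounds suffice.
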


\begin{proof}[Proof of Lemma~\ref{lem:localpl}]
Define
\begin{equation}\label{eq:qdef}
q_1=\sqrt{1-\frac{\tilde C^2}{2(\sqrt 2-1)\sigma_r(M^*)}}, \quad q_2=\frac{\sqrt 2\mu'}{\sigma_r(M^*)^{1/2}-\tilde C}.
\end{equation}
The assumption \eqref{eq:plradius} on $\tilde C$ implies that $\delta<q_1$, and thus one can always find a sufficiently small $\mu'>0$ such that
\begin{equation}\label{eq:deltacontra}
\frac{1-\delta}{1+\delta}>\frac{1-q_1+q_2}{1+q_1}.
\end{equation}

We choose $\mu=\mu'^2/(1+\delta)$. Assume on the contrary that
\[
\frac{1}{2}\norm{\nabla g_s(X)}_F^2<\mu(g_s(X)-f_s(M^*))
\]
at a particular matrix $X$ in the region \eqref{eq:plneighbor}. Obviously, $XX^T \neq M^*$. It results from \eqref{eq:globalrip} that
\[
\frac{1}{2}\norm{\nabla g_s(X)}_F^2<\mu(f_s(XX^T)-f_s(M^*)) \leq \frac{\mu(1+\delta)}{2}\norm{XX^T-M^*}_F^2,
\]
and thus
\[
\norm{\nabla g_s(X)}_F \leq \mu'\norm{XX^T-M^*}_F.
\]
Therefore, if we define $\delta_f^*(X,\mu')$ to be the optimal value of the optimization problem
\begin{equation}\label{eq:deltaoptfoc}
\begin{aligned}
\min_{\delta,\mathbf H} \quad & \delta \\
\st \quad & \norm{\mathbf X^T\mathbf H\mathbf e} \leq \mu'\norm{\mathbf e}, \\
& \text{$\mathbf H$ is symmetric and satisfies $\delta$-$\RIP_{2r}$},
\end{aligned}
\end{equation}
then Lemma~\ref{lem:gradhessian} shows that $\delta_f^*(X,\mu') \leq \delta$. However, Lemma~\ref{lem:deltaoptfocbound} (to be stated next) shows that
\[
\frac{1-\delta}{1+\delta} \leq \frac{1-\delta_f^*(X,\mu')}{1+\delta_f^*(X,\mu')} \leq \frac{1-q_1+q_2}{1+q_1},
\]
which contradicts the inequality \eqref{eq:deltacontra}.
\end{proof}

\begin{lemma}\label{lem:deltaoptfocbound}
If $X \in \RR^{n \times r}$ is a matrix in the region \eqref{eq:plneighbor} such that $XX^T \neq M^*$, then the optimal value $\delta_f^*(X,\mu')$ of the optimization problem \eqref{eq:deltaoptfoc} satisfies
\[
\frac{1-\delta_f^*(X,\mu')}{1+\delta_f^*(X,\mu')} \leq \frac{1-q_1+q_2}{1+q_1},
\]
where $q_1$ and $q_2$ are defined in \eqref{eq:qdef}.
\end{lemma}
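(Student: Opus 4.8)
\emph{Plan.} The plan is to prove the equivalent lower bound $\delta_f^*(X,\mu')\ge\delta_0$, where $\delta_0:=\frac{2q_1-q_2}{2+q_2}$; since $t\mapsto\frac{1-t}{1+t}$ is decreasing and a short computation gives $\frac{1-\delta_0}{1+\delta_0}=\frac{1-q_1+q_2}{1+q_1}$, this yields the assertion (which is in any case vacuous when $q_2\ge 2q_1$, because then its right-hand side is $\ge 1$). So I will fix an arbitrary feasible pair $(\delta,\mathbf H)$ for \eqref{eq:deltaoptfoc} and show $\delta\ge\delta_0$. First I set up the geometry: let $Z=\mathcal P_{\mathcal Z}(X)$, so $ZZ^T=M^*$, the matrix $X^TZ$ is symmetric positive semidefinite, and $\norm{\Delta}_F=\dist(X,\mathcal Z)\le\tilde C$ with $\Delta:=X-Z$; put $E:=XX^T-M^*=\mat_S(\mathbf e)$, a symmetric matrix whose column and row spaces lie in the subspace $\mathcal S:=\mathrm{col}(X)+\mathrm{col}(Z)$ of dimension at most $2r$. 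The identity $E=(X\Delta^T+\Delta X^T)-\Delta\Delta^T$ is the key algebraic fact: its first summand equals $\mat(\mathbf X\vect(\Delta))$ and hence lies in $\mathrm{range}(\mathbf X)$, while $\Delta$ has its columns in $\mathcal S$.

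\emph{The two scalars $q_1,q_2$.} The first estimate produces $q_1$: since $\mathbf X\vect(\Delta)\in\mathrm{range}(\mathbf X)$, the distance from $\mathbf e$ to $\mathrm{range}(\mathbf X)$ is at most $\norm{\Delta\Delta^T}_F\le\norm{\Delta}_2\norm{\Delta}_F\le\tilde C\norm{\Delta}_F$, and Lemma~\ref{lem:normcompare} gives $\norm{\Delta}_F\le\norm{E}_F/\sqrt{2(\sqrt 2-1)\sigma_r(M^*)}$; combining, $\dist(\mathbf e,\mathrm{range}(\mathbf X))^2\le\frac{\tilde C^2}{2(\sqrt 2-1)\sigma_r(M^*)}\norm{\mathbf e}^2=(1-q_1^2)\norm{\mathbf e}^2$, so $\norm{\Pi_{\mathbf X}\mathbf e}\ge q_1\norm{\mathbf e}$ (the hypothesis $XX^T\ne M^*$ makes $\mathbf e\ne 0$, so all these normalizations are legitimate). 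For the second estimate I replace $\mathbf X$ by its restriction $\mathbf X'$ to directions $U$ whose columns lie in $\mathcal S$; because $\Delta$ itself has columns in $\mathcal S$, the bound $\dist(\mathbf e,\mathrm{range}(\mathbf X'))\le\norm{\Delta\Delta^T}_F$ survives and $\norm{\Pi_{\mathbf X'}\mathbf e}\ge q_1\norm{\mathbf e}$. Diagonalizing the symmetric Lyapunov-type map $U\mapsto XU^T+UX^T$ via the SVD of $X$ and factoring out its kernel $\{XS:S^T=-S\}$ yields $\norm{XU^T+UX^T}_F\ge\sqrt 2\,\sigma_r(X)\norm{U}_F$ off the kernel, hence the smallest nonzero singular value satisfies $\sigma_{\min}^{+}(\mathbf X')\ge\sqrt 2\,\sigma_r(X)\ge\sqrt 2(\sigma_r(M^*)^{1/2}-\tilde C)>0$ by Weyl's inequality, $\norm{\Delta}_2\le\tilde C$, and \eqref{eq:plradius}; together with the feasibility constraint $\norm{(\mathbf X')^T\mathbf H\mathbf e}\le\norm{\mathbf X^T\mathbf H\mathbf e}\le\mu'\norm{\mathbf e}$ this forces $\norm{\Pi_{\mathbf X'}\mathbf H\mathbf e}\le\frac{\mu'\norm{\mathbf e}}{\sqrt 2(\sigma_r(M^*)^{1/2}-\tilde C)}=\frac{q_2}{2}\norm{\mathbf e}$, which is exactly where $q_2$, its factor $\sqrt 2$, and its denominator $\sigma_r(M^*)^{1/2}-\tilde C$ all come from.

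\emph{The clash, and the obstacle.} It remains to oppose this with what $\RIP_{2r}$ forces. Every $L=XU^T+UX^T$ in $\mathrm{range}(\mathbf X')$ is symmetric with column and row spaces in the common $(\le 2r)$-dimensional subspace $\mathcal S$, as is $E$; therefore $E\pm tL$ has rank at most $2r$ for every $t\in\RR$, so applying the two-sided $\RIP_{2r}$ bound to $E\pm tL$, polarizing, and optimizing over $t>0$ gives the sharp bilinear estimate $\abs{[\mathbf H](E,L)-\langle E,L\rangle}\le\delta\norm{E}_F\norm{L}_F$ (the constant is $\delta$ rather than the generic $2\delta$ of Lemma~\ref{lem:rop2r}, precisely because no rank blow-up occurs inside $\mathcal S$). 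Taking the supremum over such $L$ turns this into $\norm{\Pi_{\mathbf X'}(\mathbf H\mathbf e-\mathbf e)}\le\delta\norm{\mathbf e}$, so $\norm{\Pi_{\mathbf X'}\mathbf H\mathbf e}\ge\norm{\Pi_{\mathbf X'}\mathbf e}-\delta\norm{\mathbf e}\ge(q_1-\delta)\norm{\mathbf e}$. Comparing with the upper bound $\frac{q_2}{2}\norm{\mathbf e}$ from the previous paragraph yields $q_1-\delta\le\frac{q_2}{2}$, i.e., $\delta\ge q_1-\frac{q_2}{2}=\frac{2q_1-q_2}{2}\ge\delta_0$, which is what was needed. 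I expect the two quantitative lemmas feeding this chain to be the technical core: the operator-norm lower bound $\sigma_{\min}^{+}(\mathbf X')\ge\sqrt 2\,\sigma_r(X)$, which requires a careful diagonalization of the Lyapunov operator and a clean disposal of its antisymmetric kernel, and the sharp polarization estimate with constant $\delta$, which is exactly what prevents the final bound from degrading to the insufficient value $\frac{2q_1-q_2}{4}$ that the crude constant $2\delta$ would give.
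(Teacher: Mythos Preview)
Your argument is correct and takes a genuinely different route from the paper's. The paper proceeds via semidefinite duality: it first invokes Lemma~\ref{lem:lmireform} to replace the $\RIP_{2r}$ constraint by the full LMI $(1-\delta)I\preceq\mathbf H\preceq(1+\delta)I$, rescales to the auxiliary problem \eqref{eq:etaoptfoc} whose optimum $\eta_f^*$ dominates $(1-\delta_f^*)/(1+\delta_f^*)$, and then upper-bounds $\eta_f^*$ by exhibiting an explicit feasible point of the dual \eqref{eq:etafocdual} built from the same vector $y=\vect(X-Z)$; the dual objective evaluates to $\frac{1-\cos\theta+2\mu'\norm{y}/\norm{\mathbf Xy}}{1+\cos\theta}$ and the bounds $\cos\theta\ge q_1$ and $\norm{y}/\norm{\mathbf Xy}\le 1/(\sqrt2(\sigma_r(M^*)^{1/2}-\tilde C))$ finish the job. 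You instead stay on the primal side: fixing any feasible $(\delta,\mathbf H)$, you project $\mathbf e$ and $\mathbf H\mathbf e$ onto $\mathrm{range}(\mathbf X')$, combine the lower bound $\norm{\Pi_{\mathbf X'}\mathbf e}\ge q_1\norm{\mathbf e}$ with the sharp polarized RIP estimate $\norm{\Pi_{\mathbf X'}(\mathbf H\mathbf e-\mathbf e)}\le\delta\norm{\mathbf e}$ (available because $E$ and every $L\in\mathrm{range}(\mathbf X')$ live in the common subspace $\mathcal S\otimes\mathcal S$ of rank-$\le 2r$ matrices), and oppose that against $\norm{\Pi_{\mathbf X'}\mathbf H\mathbf e}\le\norm{(\mathbf X')^T\mathbf H\mathbf e}/\sigma_{\min}^{+}(\mathbf X')\le\frac{q_2}{2}\norm{\mathbf e}$. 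Your approach is more elementary---it bypasses both Lemma~\ref{lem:lmireform} and the explicit dual construction---and in fact yields the slightly stronger conclusion $\delta_f^*\ge q_1-q_2/2$, which strictly exceeds the paper's threshold $\delta_0=(2q_1-q_2)/(2+q_2)$ whenever $q_2>0$. The payoff of the paper's duality machinery is that the same template is reused verbatim, with one extra dual block, for the second-order strict-saddle bound in Lemma~\ref{lem:deltaoptbound}; your projection argument would need a separate idea there. The two technical ingredients you flag are indeed the crux, and both are sound: the singular-value bound $\sigma_{\min}^{+}(\mathbf X)=\sqrt2\,\sigma_r(X)$ follows from the Lyapunov diagonalization you describe (the kernel is exactly $\{XS:S^T=-S\}$ when $\sigma_r(X)>0$, and the smallest nonzero singular value comes from the ``$B$-block'' directions orthogonal to $\mathrm{col}(X)$), and the restriction to $\mathbf X'$ preserves this bound because $\ker\mathbf X\subseteq\mathrm{dom}(\mathbf X')$.
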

\begin{proof}
Let $Z \in \mathcal Z$ be a global minimizer such that $ZZ^T=M^*$. The fact that $X$ is in the region \eqref{eq:plneighbor} implies that $\norm{X-Z}_F \leq \tilde C$. Without loss of generality, it can be assumed that $X^TZ$ is symmetric and positive semidefinite. If this is not the case, then we use the singular value decomposition $X^TZ=PDQ^T$ in which $P,Q \in \RR^{n \times n}$ are orthogonal and $D \in \RR^{n \times n}$ is diagonal. By defining $R=QP^T$, the matrix $ZR$ becomes another global minimizer and
\[
X^T(ZR)=PDQ^TQP^T=PDP^T \succeq 0,
\]
implying that we can continue the following argument with $ZR$ instead of $Z$.

The optimal value of the problem \eqref{eq:deltaoptfoc} is equal to that of the problem
\begin{equation}\label{eq:deltaoptfoclmi}
\begin{aligned}
\min_{\delta,\mathbf H} \quad & \delta \\
\st \quad & \begin{bmatrix}
I_{nr} & \mathbf X^T\mathbf H\mathbf e \\
(\mathbf X^T\mathbf H\mathbf e)^T & \mu'^2\norm{\mathbf e}^2
\end{bmatrix} \succeq 0, \\
& (1-\delta)I_{n^2} \preceq \mathbf H \preceq (1+\delta)I_{n^2}.
\end{aligned}
\end{equation}
This can be proved by applying Lemma~\ref{lem:lmireform} with $a=\mu'\norm{\mathbf e}$ and a sufficiently large $b$ such that both the optimal solutions of \eqref{eq:deltaoptfoc} and \eqref{eq:deltaoptfoclmi} satisfy the second constraint in \eqref{eq:opt} and \eqref{eq:lmi}. Now, define $\eta_f^*(X,\mu')$ to be the optimal value of the following optimization problem:
\begin{equation}\label{eq:etaoptfoc}
\begin{aligned}
\max_{\eta,\mathbf H} \quad & \eta \\
\st \quad & \begin{bmatrix}
I_{nr} & \mathbf X^T\mathbf H\mathbf e \\
(\mathbf X^T\mathbf H\mathbf e)^T & \mu'^2\norm{\mathbf e}^2
\end{bmatrix} \succeq 0, \\
& \eta I_{n^2} \preceq \mathbf H \preceq I_{n^2}.
\end{aligned}
\end{equation}
Note that the first constraint in \eqref{eq:deltaoptfoclmi} and \eqref{eq:etaoptfoc} is actually equivalent to $\norm{\mathbf X^T\mathbf H\mathbf e} \leq \mu'\norm{\mathbf e}$. Given any feasible solution $(\delta,\mathbf H)$ to the problem \eqref{eq:deltaoptfoclmi},
\[
\left(\frac{1-\delta}{1+\delta},\frac{1}{1+\delta}\mathbf H\right)
\]
is a feasible solution to the above problem \eqref{eq:etaoptfoc}. Therefore,
\begin{equation}\label{eq:deltaeta}
\eta_f^*(X,\mu') \geq \frac{1-\delta_f^*(X,\mu')}{1+\delta_f^*(X,\mu')}.
\end{equation}
To prove the desired inequality, it is sufficient to upper bound $\eta_f^*(X,\mu')$ by finding a feasible solution to the dual problem of \eqref{eq:etaoptfoc} given below:
\begin{equation}\label{eq:etafocdual}
\begin{aligned}
\min_{U_1,U_2,G,\lambda,y} \quad & \tr(U_2)+\mu'^2\norm{\mathbf e}^2\lambda+\tr(G), \\
\st \quad & \tr(U_1)=1, \\
& (\mathbf Xy)\mathbf e^T+\mathbf e(\mathbf Xy)^T=U_1-U_2, \\
& \begin{bmatrix}
G & -y \\
-y^T & \lambda
\end{bmatrix} \succeq 0, \\
& U_1 \succeq 0, \quad U_2 \succeq 0.
\end{aligned}
\end{equation}
As shown in the first part of the proof of Lemma~19 in \citet{BL2020-pb}, there exists a nonzero vector $y \in \RR^{nr}$ such that
\begin{equation}\label{eq:ynormineq}
\norm{\mathbf Xy}^2 \geq 2\sigma_r(XX^T)\norm{y}^2
\end{equation}
and
\[
\norm{\mathbf e-\mathbf Xy} \leq \norm{X-Z}_F^2.
\]
By Lemma~\ref{lem:normcompare}, we have
\[
\frac{\norm{\mathbf e-\mathbf Xy}}{\norm{\mathbf e}} \leq \frac{\norm{X-Z}_F^2}{\norm{XX^T-M^*}_F} \leq \sqrt\frac{1}{2(\sqrt 2-1)\sigma_r(M^*)}\tilde C<1.
\]
If $\theta$ is the angle between $\mathbf e$ and $\mathbf Xy$, then the above inequality implies that $\theta<\pi/2$ and
\[
\sin\theta \leq \frac{\norm{\mathbf e-\mathbf Xy}}{\norm{\mathbf e}} \leq \sqrt\frac{1}{2(\sqrt 2-1)\sigma_r(M^*)}.
\]
Therefore,
\begin{equation}\label{eq:yangleineq}
\cos\theta \geq q_1.
\end{equation}
On the other hand, the Wielandt--Hoffman theorem implies that
\[
\abs{\sigma_r(XX^T)^{1/2}-\sigma_r(M^*)^{1/2}}=\abs{\sigma_r(X)-\sigma_r(Z)} \leq \norm{X-Z}_F \leq \tilde C.
\]
Combining the above inequality and \eqref{eq:ynormineq} gives
\begin{equation}\label{eq:ynormineq2}
\norm{y} \leq \frac{\norm{\mathbf Xy}}{\sqrt 2(\sigma_r(M^*)^{1/2}-\tilde C)}.
\end{equation}

Let
\[
M=(\mathbf Xy)\mathbf e^T+\mathbf e(\mathbf Xy)^T,
\]
with $y$ given above, and decompose $M$ as
\[
M=[M]_+-[M]_-
\]
such that $[M]_+ \succeq 0$ and $[M]_- \succeq 0$. By Lemma~14 in \citet{ZSL2019}, we have
\begin{align*}
\tr([M]_+)&=\norm{\mathbf e}\norm{\mathbf Xy}(1+\cos\theta), \\
\tr([M]_-)&=\norm{\mathbf e}\norm{\mathbf Xy}(1-\cos\theta).
\end{align*}
Again, $\theta$ is the angle between $\mathbf e$ and $\mathbf Xy$. Then,
\begin{gather*}
U_1^*=\frac{[M]_+}{\tr([M]_+)}, \quad U_2^*=\frac{[M]_-}{\tr([M]_+)}, \\
G^*=\frac{1}{\lambda^*}y^*y^{*T}, \quad \lambda^*=\frac{\norm{y^*}}{\mu'\norm{\mathbf e}} \quad y^*=\frac{y}{\tr([M]_+)}
\end{gather*}
form a feasible solution to the dual problem \eqref{eq:etafocdual} with the objective value
\[
\frac{\tr([M]_-)+2\mu'\norm{\mathbf e}\norm{y}}{\tr([M]_+)}=\frac{1-\cos\theta+2\mu'\norm{y}/\norm{\mathbf Xy}}{1+\cos\theta}.
\]
The inequalities \eqref{eq:yangleineq} and \eqref{eq:ynormineq2} imply that
\[
\eta_f^*(X,\eta) \leq \frac{1-q_1+q_2}{1+q_1}.
\]
The proof is completed by the above inequality and \eqref{eq:deltaeta}.
\end{proof}

\begin{proof}[Proof of Theorem~\ref{thm:localconvsym}]
Define
\[
\tilde C=\sqrt\frac{1+\delta}{2(\sqrt 2-1)\sigma_r(M^*)(1-\delta)}\norm{X_0X_0^T-M^*}_F.
\]
Then, it follows from Lemma~\ref{lem:localpl} that there exists a constant $\mu>0$ such that the PL inequality
\[
\frac{1}{2}\norm{\nabla g_s(X)}_F^2 \geq \mu(g_s(X)-f_s(M^*))
\]
is satisfied in the region
\[
\mathcal D=\{X \in \RR^{n \times r} | \dist(X,\mathcal Z) \leq \tilde C\}.
\]
By Lemma~\ref{lem:normcompare}, in order to prove that a matrix $X$ belongs to $\mathcal D$, it suffices to show that
\begin{equation}\label{eq:directdist}
\norm{XX^T-M^*}_F \leq \sqrt{2(\sqrt 2-1)}\sigma_r(M^*)^{1/2}\tilde C=\sqrt\frac{1+\delta}{1-\delta}\norm{X_0X_0^T-M^*}_F.
\end{equation}

Next, we prove by induction that $X_t$ satisfies \eqref{eq:directdist} and $g_s(X_t) \leq g_s(X_{t-1})$ at each step of the iteration. Obviously, \eqref{eq:directdist} holds for $X_0$. At step $t$, by Lemma~\ref{lem:onestep}, the induction assumption
\[
\norm{X_{t-1}X_{t-1}^T-M^*}_F \leq \sqrt\frac{1+\delta}{1-\delta}\norm{X_0X_0^T-M^*}_F
\]
and our choice of the step size $\eta$ imply that $g_s(X_t) \leq g_s(X_{t-1}) \leq \dots \leq g_s(X_0)$. Then, the inequality \eqref{eq:levelset} immediately implies that $X_t$ satisfies \eqref{eq:directdist}.

Finally, since $X_t$ is guaranteed to be contained in a region satisfying the PL inequality for all $t$, we can apply Theorem~1 in \citet{KNS2016} to obtain
\[
g_s(X_t)-f_s(M^*) \leq (1-\mu\eta)^t(g_s(X_0)-f_s(M^*)).
\]
Now, \eqref{eq:convrate} follows from the above inequality and \eqref{eq:globalrip}.
\end{proof}

After the transformation from asymmetric problems to symmetric problems, the proof of Theorem~\ref{thm:localconvasym} is similar to that of Theorem~\ref{thm:localconvsym}, and thus it is omitted here.

\begin{algorithm}
\caption{Perturbed Gradient Descent Method With Local Improvement}\label{alg:perturbed}
\begin{algorithmic}
\State $R \gets 3D(1+\delta)/(1-\delta)$
\State $\ell_1 \gets 8\rho_1 r^{1/2}R$, \quad $\ell_2 \gets 4\rho_1r^{1/4}R^{1/2}(2r^{1/2}R\rho_2/\rho_1+3)$
\State $\hat\epsilon \gets \min\{\kappa,\kappa^2/\ell_2\}$, \quad $\Delta \gets 2(1+\delta)D^2$
\State $\chi \gets 3\max\{\log((nr\ell_1\Delta)/(c\hat\epsilon^2\gamma)),4\}$, \quad $\eta \gets c/\ell_1$, \quad $w \gets \sqrt c\hat\epsilon/(\chi^2\ell_1)$
\State $g_{\text{thres}} \gets \sqrt c\hat\epsilon/\chi^2$, \quad $f_{\text{thres}} \gets c\sqrt{\hat\epsilon^3/\ell_2}/\chi^3$, \quad $t_{\text{thres}} \gets \chi\ell_1/(c^2\sqrt{\ell_2\hat\epsilon})$
\State $t \gets 0$, \quad $t_{\text{noise}} \gets -t_{\text{thres}}-1$
\Loop
\If{$\norm{\nabla g_s(X_t)}_F \leq g_{\text{thres}}$ and $t-t_{\text{noise}}>t_{\text{thres}}$}
\State $\tilde X_t \gets X_t$, \quad $t_{\text{noise}} \gets t$
\State $X_t \gets X_t+W$, where $W$ is drawn uniformly from the ball with radius $w$
\EndIf
\If{$t-t_{\text{noise}}=t_{\text{thres}}$ and $g_s(X_t)-g_s(\tilde X_{t_{\text{noise}}})>-f_{\text{thres}}$}
\State $X_t \gets \tilde X_{t_{\text{noise}}}$
\State\textbf{break}
\EndIf
\State $X_{t+1} \gets X_t-\eta\nabla g_s(X_t)$, \quad $t \gets t+1$
\EndLoop
\Loop
\State $X_{t+1} \gets X_t-\eta\nabla g_s(X_t)$, \quad $t \gets t+1$
\EndLoop
\end{algorithmic}
\end{algorithm}

\section{Proofs for Section~\ref{sec:global}}\label{app:globalproof}

We first present the perturbed gradient descent algorithm with local improvement adapted from the general algorithm in \citet{JGNK2017} for solving the symmetric problem \eqref{eq:symrec}, which can be also used to solve \eqref{eq:asymrec} after the transformation from asymmetric problems to symmetric problems. In Algorithm~\ref{alg:perturbed}, $X_0$ is the initial point and $1-\gamma$ is the success probability of the algorithm, while $\eta$ and $w$ are respectively the step size and perturbation size which are further determined by the parameter $c$. Furthermore, the parameter $\kappa$ determines at what time the corresponding point is sufficiently close to the ground truth so that it belongs to the local convergence region and thus perturbations are no longer necessary in future iterations. After the first loop ends, the current matrix $X_t$ will satisfy \eqref{eq:approxoptimalitycond}. The choice of the parameters $c$ and $\kappa$ will be given in the proof of Theorem~\ref{thm:globalconvlin}, but they can also be selected empirically.

The following lemma will be useful in the proof of Lemma~\ref{lem:strictsaddlelin}, which can be obtained by combining Lemma~6 and Lemma~7 in \citet{ZBL2021-p}.

\begin{lemma}\label{lem:strictsaddlesv}
For any $C>0$, there exist some $\kappa>0$ and $\zeta>0$ such that for each $X \in \RR^{n \times r}$ the two inequalities in \eqref{eq:approxoptimalitycond} together with $\sigma_r(X) \leq \zeta$ will imply $\norm{XX^T-ZZ^T}_F<C$.
\end{lemma}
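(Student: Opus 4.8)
The plan is to extract the conclusion directly from Lemma~\ref{lem:gradhessian}, using near rank-deficiency of $X$ to turn the approximate second-order condition into a near-positive-semidefiniteness statement. Fix the symmetric matrix $\mathbf H$ provided by Lemma~\ref{lem:gradhessian} (so $\mathbf H$ has the $\delta$-$\RIP_{2r}$ property) and set $G := \mat_S(\mathbf H\mathbf e)$, where $\mathbf e = \vect(XX^T - M^*)$. Suppose $X$ satisfies the two inequalities in \eqref{eq:approxoptimalitycond}. The argument rests on three ingredients: a first-order consequence bounding $GX$; a second-order consequence, available only because $\sigma_r(X) \le \zeta$, bounding $G$ from below; and the RIP of $\mathbf H$, which identifies $\langle G, XX^T - M^*\rangle$ with a near-isometric image of $XX^T - M^*$. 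These combine into a \emph{self-improving} inequality for $\norm{XX^T - M^*}_F$; this is the same mechanism used when combining Lemma~6 and Lemma~7 of \citet{ZBL2021-p}.

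For the first-order consequence, unwinding the definitions of $\mathbf X$ and $\mat_S$ gives $\mathbf X^T\mathbf H\mathbf e = 2\vect(GX)$, so the first inequality of Lemma~\ref{lem:gradhessian} yields $\norm{GX}_F \le \tfrac12\norm{\nabla g_s(X)}_F \le \kappa/2$. For the second-order consequence, I would pick a unit right singular vector $v \in \RR^r$ of $X$ with $\norm{Xv} = \sigma_r(X) \le \zeta$; for any $a \in \RR^n$ the direction $U := av^T$ satisfies $\norm{U}_F = \norm{a}$ and $XU^T + UX^T = (Xv)a^T + a(Xv)^T$, hence $\norm{XU^T+UX^T}_F \le 2\zeta\norm{a}$. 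Substituting $\vect(U)$ into the second inequality of Lemma~\ref{lem:gradhessian} and using $\lambda_{\min}(\nabla^2 g_s(X)) \ge -\kappa$ gives $2\,a^TGa + (1+\delta)\norm{XU^T+UX^T}_F^2 \ge -\kappa\norm{a}^2$, so $G \succeq -\epsilon_1 I_n$ with $\epsilon_1 := \kappa/2 + 2(1+\delta)\zeta^2$. Finally, because $XX^T - M^*$ is symmetric of rank at most $2r$ and $\mathbf H$ satisfies $\delta$-$\RIP_{2r}$, we get $\langle G, XX^T - M^*\rangle = \mathbf e^T\mathbf H\mathbf e \ge (1-\delta)\norm{XX^T - M^*}_F^2$.

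To conclude, write $\langle G, XX^T - M^*\rangle = \langle GX, X\rangle - \tr(Z^TGZ)$ for a global minimizer $Z \in \mathcal Z$; by \eqref{eq:globalrip} one has $ZZ^T = M^*$, so $\norm{Z}_F^2 = \tr(M^*) \le \sqrt r D$, and $G \succeq -\epsilon_1 I_n$ gives $-\tr(Z^TGZ) \le \epsilon_1\sqrt r D$. Combining this with $\norm{GX}_F \le \kappa/2$ and $\norm{X}_F \le r^{1/4}\norm{XX^T}_F^{1/2} \le r^{1/4}(\norm{XX^T-M^*}_F + D)^{1/2}$ (as in \eqref{eq:normofproduct}), the quantity $\langle G, XX^T - M^*\rangle$ is at most $\tfrac12\kappa\,r^{1/4}(\norm{XX^T-M^*}_F+D)^{1/2} + \epsilon_1\sqrt r D$. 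Writing $t := \norm{XX^T - M^*}_F$ and invoking the lower bound from the RIP of $\mathbf H$, this produces an inequality of the form $(1-\delta)t^2 \le c_1\kappa(t+D)^{1/2} + c_2(\kappa+\zeta^2)$ with constants $c_1,c_2$ depending only on $r,\delta,D$; the quadratic left-hand side first forces $t$ to be bounded and then forces $t \to 0$ as $\kappa,\zeta \to 0$. Hence, given $C > 0$, choosing $\kappa$ and $\zeta$ small enough yields $t < C$, i.e. $\norm{XX^T - ZZ^T}_F < C$ for every $Z \in \mathcal Z$.

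The step I expect to require the most care is the final bookkeeping: one must verify that $\norm{X}_F$ can be absorbed into $t$ through \eqref{eq:normofproduct} rather than bounded a priori (which would be circular), so that $c_1$ and $c_2$ are genuinely independent of $X$ and the error terms vanish uniformly as the optimality tolerances shrink. It is also worth noting that the argument uses only $\delta < 1$, not $\delta < 1/2$, consistent with the nearly rank-deficient regime being the easy case; the sharp hypothesis $\delta < 1/2$ enters Lemma~\ref{lem:strictsaddlelin} only through the complementary range $\sigma_r(X) > \zeta$, which is exactly why that lemma needs the present statement as its rank-deficient input.
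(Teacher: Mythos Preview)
Your argument is correct. The paper does not give its own proof of this lemma; it simply records that the statement follows by combining Lemma~6 and Lemma~7 of \citet{ZBL2021-p}. What you have written is precisely an explicit, self-contained version of that combination: Lemma~\ref{lem:gradhessian} here packages the same first- and second-order information that those two lemmas extract, and your three ingredients (the bound on $GX$, the lower bound $G\succeq -\epsilon_1 I_n$ coming from the small-$\sigma_r(X)$ direction, and the RIP lower bound on $\mathbf e^T\mathbf H\mathbf e$) are exactly the content of that reference specialized to the nearly rank-deficient regime. Your closing bootstrapping step, where $(1-\delta)t^2$ is bounded by a term growing only like $\sqrt{t}$, is the right way to turn those bounds into a uniform smallness statement, and your remark that only $\delta<1$ is needed here is accurate. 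So there is no discrepancy in approach---you have simply unpacked the citation.
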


\begin{proof}[Proof of Lemma~\ref{lem:strictsaddlelin}]
Let $\zeta$ be the constant given by Lemma~\ref{lem:strictsaddlesv}. We only need to consider all $X \in \RR^{n \times r}$ satisfying $\sigma_r(X)>\zeta$, since the opposite case can be directly handled by applying Lemma~\ref{lem:strictsaddlesv}. By Lemma~\ref{lem:gradhessian}, if $X$ satisfies the approximate first-order and second-order necessary optimality conditions \eqref{eq:approxoptimalitycond}, we must have $\delta \geq \delta^*(X,\kappa)$, where $\delta^*(X,\kappa)$ is the optimal value of the following optimization problem:
\begin{equation}\label{eq:deltaopt}
\begin{aligned}
\min_{\delta,\mathbf H} \quad & \delta \\
\st \quad & \norm{\mathbf X^T\mathbf H\mathbf e} \leq \kappa, \\
& 2I_r \otimes \mat_S(\mathbf H\mathbf e)+(1+\delta)\mathbf X^T\mathbf X \succeq -\kappa I_{nr}, \\
& \text{$\mathbf H$ is symmetric and satisfies $\delta$-$\RIP_{2r}$}.
\end{aligned}
\end{equation}
On the other hand, both the assumption $\delta<1/2$ and Lemma~\ref{lem:deltaoptbound} imply that
\[
\frac{1}{3}<\frac{1-\delta}{1+\delta} \leq \frac{1-\delta^*(X,\kappa)}{1+\delta^*(X,\kappa)} \leq \frac{1}{3}+\Gamma\frac{\kappa}{\norm{\mathbf e}},
\]
for some constant $\Gamma$, which further implies that
\[
\kappa \geq \frac{\norm{\mathbf e}}{\Gamma}\left(\frac{1-\delta}{1+\delta}-\frac{1}{3}\right).
\]
The strict saddle property can then be proved by choosing a sufficiently small $\kappa$.
\end{proof}

\begin{lemma}\label{lem:deltaoptbound}
Given a constant $\zeta>0$, if $X \in \RR^{n \times r}$ is a matrix satisfying $XX^T \neq M^*$ and $\sigma_r(X)>\zeta$, then the optimal value $\delta^*(X,\kappa)$ of the optimization problem \eqref{eq:deltaopt} satisfies
\[
\frac{1-\delta^*(X,\kappa)}{1+\delta^*(X,\kappa)} \leq \frac{1}{3}+\Gamma\frac{\kappa}{\norm{\mathbf e}},
\]
where $\Gamma=\sqrt r+\sqrt 2/\zeta$.
\end{lemma}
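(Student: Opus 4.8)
The plan is to mimic the proof of Lemma~\ref{lem:deltaoptfocbound}: turn \eqref{eq:deltaopt} into an equivalent linear matrix inequality problem, pass to a normalized ``$\eta$-problem'', and then upper bound the latter by exhibiting one feasible point of its dual.

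First I would replace the constraint ``$\mathbf H$ is symmetric and satisfies $\delta$-$\RIP_{2r}$'' by the genuine operator inequality $(1-\delta)I_{n^2} \preceq \mathbf H \preceq (1+\delta)I_{n^2}$. This does not change the optimal value: $\mathbf H$ appears in the other two constraints of \eqref{eq:deltaopt} only through $\mathbf X^T\mathbf H\mathbf e$ and $\mathbf H\mathbf e$, and $\mathbf e$ together with the columns of $\mathbf X$ are vectorizations of matrices of rank at most $2r$, so the worst-case $\mathbf H$ can be taken to be a true matrix obeying the two-sided bound, exactly as Lemma~\ref{lem:lmireform} is used in the proof of Lemma~\ref{lem:deltaoptfocbound} (here with $a=\kappa$ and a sufficiently large $b$). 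Next, for any feasible pair $(\delta,\mathbf H)$ of this reformulation, the pair $\bigl(\tfrac{1-\delta}{1+\delta},\,\tfrac{1}{1+\delta}\mathbf H\bigr)$ is feasible for the $\eta$-problem
\begin{equation*}
\begin{aligned}
\max_{\eta,\mathbf H} \quad & \eta \\
\st \quad & \norm{\mathbf X^T\mathbf H\mathbf e} \leq \kappa, \\
& 2I_r \otimes \mat_S(\mathbf H\mathbf e)+\mathbf X^T\mathbf X \succeq -\kappa I_{nr}, \\
& \eta I_{n^2} \preceq \mathbf H \preceq I_{n^2},
\end{aligned}
\end{equation*}
because $\tfrac{1}{1+\delta}\le 1$ takes care of the first constraint while the $(1+\delta)$ factor cancels in the second; hence its optimal value $\eta^*(X,\kappa)$ satisfies $\eta^*(X,\kappa)\ge \tfrac{1-\delta^*(X,\kappa)}{1+\delta^*(X,\kappa)}$, and it remains to prove $\eta^*(X,\kappa)\le \tfrac13+\Gamma\kappa/\norm{\mathbf e}$.

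For the latter I would write down the SDP dual of the $\eta$-problem and plug in an explicit dual-feasible point; by weak duality its objective bounds $\eta^*(X,\kappa)$ from above. As in Lemma~\ref{lem:deltaoptfocbound}, replacing $Z$ (with $ZZ^T=M^*$) by $ZR$ for a suitable orthogonal $R$ lets me assume that $X^TZ$ is symmetric and positive semidefinite. Relative to the first-order-only dual \eqref{eq:etafocdual}, the new ingredient is the dual multiplier carried by the second-order inequality $2I_r \otimes \mat_S(\mathbf H\mathbf e)+\mathbf X^T\mathbf X \succeq -\kappa I_{nr}$; the core of the construction is to couple the first-order certificate direction (a vector $y$ with $\mathbf X y$ close to $\mathbf e$, obtained as in \citet{BL2020-pb}) with a block-rank-one second-order certificate built from $\vect(Z)$ and the pattern $I_r\otimes(\cdot)$. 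The hypothesis $\sigma_r(X)>\zeta$ is used to bound the Gram matrices that appear when transferring a direction from ``$\mathbf e$-space'' to ``$nr$-space'' (producing the $\sqrt2/\zeta$ term), and $\tr(I_r)=r$ together with trace-versus-Frobenius comparisons on the $r\times r$ blocks produces the $\sqrt r$ term; assembling the dual objective gives $\tfrac13$ plus an error bounded by $(\sqrt r+\sqrt2/\zeta)\kappa/\norm{\mathbf e}$. At $\kappa=0$ this recovers the sharp estimate $\tfrac{1-\delta^*(X)}{1+\delta^*(X)}\le\tfrac13$ behind the no-spurious-minima result of \citet{Zhang2021-p}; the explicit tracking of the $\kappa$-dependent terms is precisely the required generalization.

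The main obstacle is the explicit dual-feasible construction for the second-order-augmented program: unlike in Lemma~\ref{lem:deltaoptfocbound}, one must choose the multiplier for the second-order LMI, the first-order triple $(G,\lambda,y)$, and the slack budget against $\kappa$ so that all PSD dual constraints hold simultaneously, and then carry out the bookkeeping so that the two perturbation contributions aggregate exactly to $\Gamma=\sqrt r+\sqrt2/\zeta$. A minor additional point is to check that the reformulation step (Lemma~\ref{lem:lmireform}) is unaffected by the presence of the second-order constraint, which holds because that constraint also depends on $\mathbf H$ only through its action on matrices of rank at most $2r$.
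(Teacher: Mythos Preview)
Your high-level strategy matches the paper's: reformulate \eqref{eq:deltaopt} via Lemma~\ref{lem:lmireform}, pass to the normalized $\eta$-problem, and bound $\eta^*(X,\kappa)$ from above by exhibiting a dual-feasible point. Two minor corrections first: in the reformulation step one should take $a=b=\kappa$ (the second-order constraint in \eqref{eq:deltaopt} already has right-hand side $-\kappa I_{nr}$, so there is nothing to make vacuous), and the reduction to $X^TZ\succeq 0$ is not used in the paper's argument for this lemma and plays no role below.

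The substantive gap is in the dual certificate. You propose to reuse the first-order direction from \citet{BL2020-pb}, i.e.\ a $y$ with $\mathbf Xy$ \emph{close} to $\mathbf e$ (error $\le\norm{X-Z}_F^2$), and to attach an ad~hoc second-order multiplier built from $\vect(Z)$. That construction is what drives Lemma~\ref{lem:deltaoptfocbound}, and it yields bounds that depend on $\dist(X,\mathcal Z)$; it cannot produce the uniform constant $\tfrac13$ valid for \emph{all} $X$ with $\sigma_r(X)>\zeta$. The paper instead uses the orthogonal projection $\mathcal P$ onto the range of $X$, writes $Z=XR+\mathcal P_\perp Z$, and chooses
\[
\hat Y=\tfrac12X-\tfrac12XRR^T-\mathcal P_\perp ZR^T,\qquad \hat y=\vect(\hat Y),
\]
so that one has the \emph{exact} orthogonal splitting
\[
\mathbf e=\mathbf X\hat y-\vect(Z_\perp Z_\perp^T),\qquad Z_\perp:=\mathcal P_\perp Z,\qquad \langle\mathbf X\hat y,\vect(Z_\perp Z_\perp^T)\rangle=0.
\]
The second-order dual multiplier $W$ is then built not from $\vect(Z)$ but from the eigendecomposition of $Z_\perp Z_\perp^T$, designed so that the corresponding $w=\sum_j\vect(W_{j,j})$ is proportional to $\vect(Z_\perp Z_\perp^T)$ and so that $\langle\mathbf X^T\mathbf X,W\rangle$ picks up only $\sigma_r(X)^2$. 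A free scalar $\gamma\in[0,1]$ interpolates between the first- and second-order pieces; optimizing over $\gamma$ reduces the $\kappa=0$ bound to a function $\eta_0(X)$ of the two geometric quantities
\[
\alpha=\frac{\norm{Z_\perp Z_\perp^T}_F}{\norm{\mathbf e}},\qquad \beta=\frac{\sigma_r(X)^2\tr(Z_\perp Z_\perp^T)}{\norm{\mathbf e}\,\norm{Z_\perp Z_\perp^T}_F},
\]
and it is precisely the inequality $\eta_0(X)\le\tfrac13$ from \citet{Zhang2021-p} (for these $\alpha,\beta$) that delivers the constant. The degenerate case $Z_\perp=0$ is handled separately. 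The $\kappa$-perturbations do arise as you guessed---$\sqrt2/\zeta$ from $\norm{\mathbf X\hat y}^2\ge 2\sigma_r(X)^2\norm{\hat y}^2$, and $\sqrt r$ from $\tr(Z_\perp Z_\perp^T)\le\sqrt r\,\norm{Z_\perp Z_\perp^T}_F$ in bounding $\tr(W)$---but only once this specific $(\hat y,W,\gamma)$ construction is in place. Without the exact projection-based splitting and the $\gamma$-optimization, your proposed certificate will not reach $\tfrac13$.
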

\begin{proof}
Let $Z \in \mathcal Z$ be a global minimizer such that $ZZ^T=M^*$. By Lemma~\ref{lem:lmireform} with $a=b=\kappa$ and an argument similar to the one in the proof of Lemma~\ref{lem:deltaoptfocbound}, we can introduce a relaxed optimization problem
\begin{equation}\label{eq:etaopt}
\begin{aligned}
\max_{\eta,\mathbf H} \quad & \eta \\
\st \quad & \begin{bmatrix}
I_{nr} & \mathbf X^T\mathbf H\mathbf e \\
(\mathbf X^T\mathbf H\mathbf e)^T & \kappa^2
\end{bmatrix} \succeq 0, \\
& 2I_r \otimes \mat_S(\mathbf H\mathbf e)+\mathbf X^T\mathbf X \succeq -\kappa I_{nr}, \\
& \eta I_{n^2} \preceq \mathbf H \preceq I_{n^2},
\end{aligned}
\end{equation}
whose optimal value $\eta^*(X,\kappa)$ satisfies
\[
\eta^*(X,\kappa) \geq \frac{1-\delta^*(X,\kappa)}{1+\delta^*(X,\kappa)}.
\]
To prove the desired inequality, we need to find an upper bound for $\eta^*(X,\kappa)$, which can be achieved by finding a feasible solution to the dual problem of \eqref{eq:etaopt}:
\begin{equation}\label{eq:etaoptdual}
\begin{aligned}
\min_{\substack{U_1,U_2,W, \\ G,\lambda,y}} \quad & \tr(U_2)+\langle\mathbf X^T\mathbf X,W\rangle+\kappa\tr(W)+\kappa^2\lambda+\tr(G) \\
\st \quad & \tr(U_1)=1, \\
& (\mathbf Xy-w)\mathbf e^T+\mathbf e(\mathbf Xy-w)^T=U_1-U_2, \\
& \begin{bmatrix}
G & -y \\
-y^T & \lambda
\end{bmatrix} \succeq 0, \\
& U_1 \succeq 0, \quad U_2 \succeq 0, \quad W=\begin{bmatrix}
W_{1,1} & \cdots & W_{r,1}^T \\
\vdots & \ddots & \vdots \\
W_{r,1} & \cdots & W_{r,r}
\end{bmatrix} \succeq 0, \\
& w=\sum_{i=1}^r\vect(W_{i,i}).
\end{aligned}
\end{equation}

Before describing the choice of the dual feasible solution, we need to represent the error vector $\mathbf e$ in a different form. Let $\mathcal P \in \RR^{n \times n}$ be the orthogonal projection matrix onto the range of $X$, and $\mathcal P_\perp \in \RR^{n \times n}$ be the orthogonal projection matrix onto the orthogonal complement of the range of $X$. Then, $Z$ can be decomposed as $Z=\mathcal PZ+\mathcal P_\perp Z$, and there exists a matrix $R \in \RR^{r \times r}$ such that $\mathcal PZ=XR$. Note that
\[
ZZ^T=\mathcal PZZ^T\mathcal P+\mathcal PZZ^T\mathcal P_\perp+\mathcal P_\perp ZZ^T\mathcal P+\mathcal P_\perp ZZ^T\mathcal P_\perp.
\]
Thus, if we choose
\begin{equation}\label{eq:ydef}
\hat Y=\frac{1}{2}X-\frac{1}{2}XRR^T-\mathcal P_\perp ZR^T, \quad \hat y=\vect(\hat Y),
\end{equation}
then it can be verified that
\begin{gather*}
X\hat Y^T+\hat YX^T-\mathcal P_\perp ZZ^T\mathcal P_\perp=XX^T-ZZ^T, \\
\langle X\hat Y^T+\hat YX^T,\mathcal P_\perp ZZ^T\mathcal P_\perp\rangle=0.
\end{gather*}
Moreover, we have
\begin{equation}\label{eq:ynorm}
\begin{aligned}
\norm{X\hat Y^T+\hat YX^T}_F^2&=2\tr(X^TX\hat Y^T\hat Y)+\tr(X^T\hat YX^T\hat Y)+\tr(\hat Y^TX\hat Y^TX) \\
&\geq 2\tr(X^TX\hat Y^T\hat Y) \geq 2\sigma_r(X)^2\norm{\hat Y}_F^2,
\end{aligned}
\end{equation}
in which the first inequality is due to
\[
\tr(X^T\hat YX^T\hat Y)=\frac{1}{4}\tr((X^TX(I_r-RR^T))^2)=\frac{1}{4}\tr((X(I_r-RR^T)X^T)^2) \geq 0.
\]

Assume first that $Z_\perp=\mathcal P_\perp Z \neq 0$. The other case will be handled at the end of this proof. In the case when $Z_\perp \neq 0$, we also have $X\hat Y^T+\hat YX^T \neq 0$. Otherwise, the inequality \eqref{eq:ynorm} and the assumption $\sigma_r(X)>0$ imply that $\hat Y=0$. The orthogonality and the definition of $\hat Y$ in \eqref{eq:ydef} then give
\[
X-XRR^T=0, \quad \mathcal P_\perp ZR^T=0.
\]
The first equation above implies that $R$ is invertible since $X$ has full column rank, which contradicts $Z_\perp \neq 0$. Now, define the unit vectors
\[
\hat u_1=\frac{\mathbf X\hat y}{\norm{\mathbf X\hat y}}, \quad \hat u_2=\frac{\vect(Z_\perp Z_\perp^T)}{\norm{Z_\perp Z_\perp^T}_F}.
\]
Then, $\hat u_1 \perp \hat u_2$ and
\begin{equation}\label{eq:epolar}
\mathbf e=\norm{\mathbf e}(\sqrt{1-\alpha^2}\hat u_1-\alpha\hat u_2)
\end{equation}
with
\begin{equation}\label{eq:alphadef}
\alpha=\frac{\norm{Z_\perp Z_\perp^T}_F}{\norm{XX^T-ZZ^T}_F}.
\end{equation}

We first describe our choices of the dual variables $W$ and $y$ (which will be scaled later). Let
\[
X^TX=QSQ^T, \quad Z_\perp Z_\perp^T=PGP^T,
\]
with $Q,P$ orthogonal and $S,G$ diagonal, such that $S_{11}=\sigma_r(X)^2$. Fix a constant $\gamma \in [0,1]$ that is to be determined and define
\begin{gather*}
V_i=k^{1/2}G_{ii}^{1/2}PE_{i1}Q^T, \quad \forall i=1,\dots,r, \\
W=\sum_{i=1}^r\vect(V_i)\vect(V_i)^T, \quad y=l\hat y,
\end{gather*}
with $\hat y$ defined in \eqref{eq:ydef} and
\[
k=\frac{\gamma}{\norm{\mathbf e}\norm{Z_\perp Z_\perp^T}_F}, \quad l=\frac{\sqrt{1-\gamma^2}}{\norm{\mathbf e}\norm{\mathbf X\hat y}}.
\]
Here, $E_{ij}$ is the elementary matrix of size $n \times r$ with the $(i,j)$-entry being $1$. By our construction, $X^TV_i=0$, which implies that
\begin{equation}\label{eq:dualobj1}
\langle\mathbf X^T\mathbf X,W\rangle=\sum_{i=1}^r\norm{XV_i^T+V_iX^T}_F^2=2\sum_{i=1}^r\tr(X^TXV_i^TV_i)=2k\sigma_r(X)^2\sum_{i=1}^rG_{ii}=2\beta\gamma,
\end{equation}
with
\begin{equation}\label{eq:betadef}
\beta=\frac{\sigma_r(X)^2\tr(Z_\perp Z_\perp^T)}{\norm{XX^T-ZZ^T}_F\norm{Z_\perp Z_\perp^T}_F}.
\end{equation}
In addition,
\begin{equation}\label{eq:dualobj2}
\tr(W)=\sum_{i=1}^r\norm{V_i}_F^2=k\sum_{i=1}^rG_{ii}=k\tr(Z_\perp Z_\perp^T) \leq \frac{\sqrt r}{\norm{\mathbf e}},
\end{equation}
and
\[
w=\sum_{i=1}^r\vect(W_{i,i})=\sum_{i=1}^rV_iV_i^T=kZ_\perp Z_\perp^T.
\]
Therefore,
\[
\mathbf Xy-w=\frac{1}{\norm{\mathbf e}}(\sqrt{1-\gamma^2}\hat u_1-\gamma\hat u_2),
\]
which together with \eqref{eq:epolar} implies that
\begin{equation}\label{eq:dualobj3}
\norm{\mathbf e}\norm{\mathbf Xy-w}=1, \quad \langle\mathbf e,\mathbf Xy-w\rangle=\gamma\alpha+\sqrt{1-\gamma^2}\sqrt{1-\alpha^2}=\psi(\gamma).
\end{equation}
Next, the inequality \eqref{eq:ynorm} and the assumption $\sigma_r(X)>\zeta$ imply that
\begin{equation}\label{eq:dualobj4}
\norm{y} \leq \frac{\sqrt{1-\gamma^2}}{\sqrt 2\zeta\norm{\mathbf e}} \leq \frac{1}{\sqrt 2\zeta\norm{\mathbf e}}.
\end{equation}

Define
\[
M=(\mathbf Xy-w)\mathbf e^T+\mathbf e(\mathbf Xy-w)^T
\]
and decompose
\[
M=[M]_+-[M]_-,
\]
in which both $[M]_+ \succeq 0$ and $[M]_- \succeq 0$. Let $\theta$ be the angle between $\mathbf e$ and $\mathbf Xy-w$. By Lemma~14 in \citet{ZSL2019}, we have
\begin{gather*}
\tr([M]_+)=\norm{\mathbf e}\norm{\mathbf Xy-w}(1+\cos\theta), \\
\tr([M]_-)=\norm{\mathbf e}\norm{\mathbf Xy-w}(1-\cos\theta).
\end{gather*}
Now, one can verify that
\begin{gather*}
U_1^*=\frac{[M]_+}{\tr([M]_+)}, \quad U_2^*=\frac{[M]_-}{\tr([M]_+)}, \\
y^*=\frac{y}{\tr([M]_+)}, \quad W^*=\frac{W}{\tr([M]_+)}, \\
\lambda^*=\frac{\norm{y^*}}{\kappa}, \quad G^*=\frac{1}{\lambda^*}y^*y^{*T}
\end{gather*}
forms a feasible solution to the dual problem \eqref{eq:etaoptdual} whose objective value is equal to
\[
\frac{\tr([M]_-)+\langle\mathbf X^T\mathbf X,W\rangle+\kappa\tr(W)+2\kappa\norm{y}}{\tr([M]_+)}.
\]
Putting \eqref{eq:dualobj1}, \eqref{eq:dualobj2}, \eqref{eq:dualobj3} and \eqref{eq:dualobj4} into the above equation, we can obtain
\[
\eta^*(X,\kappa) \leq \frac{2\beta\gamma+1-\psi(\gamma)+(\sqrt r+\sqrt 2/\zeta)\kappa/\norm{\mathbf e}}{1+\psi(\gamma)} \leq \frac{2\beta\gamma+1-\psi(\gamma)}{1+\psi(\gamma)}+\Gamma\frac{\kappa}{\norm{\mathbf e}}.
\]
Choosing the best $\gamma \in [0,1]$ to minimize the far right-side of the above inequality leads to
\[
\eta^*(X,\kappa) \leq \eta_0(X)+\Gamma\frac{\kappa}{\norm{\mathbf e}},
\]
with
\[
\eta_0(X)=\begin{dcases*}
\frac{1-\sqrt{1-\alpha^2}}{1+\sqrt{1-\alpha^2}}, & if $\beta \geq \dfrac{\alpha}{1+\sqrt{1-\alpha^2}}$, \\
\frac{\beta(\alpha-\beta)}{1-\beta\alpha}, & if $\beta \leq \dfrac{\alpha}{1+\sqrt{1-\alpha^2}}$.
\end{dcases*}
\]
Here, $\alpha$ and $\beta$ are defined in \eqref{eq:alphadef} and \eqref{eq:betadef}, respectively. In the proof of Theorem~1.2 in \citet{Zhang2021-p}, it is shown that $\eta_0(X) \leq 1/3$ for every $X$ with $XX^T \neq ZZ^T$, which gives our desired inequality.

Finally, we still need to deal with the case when $\mathcal P_\perp Z=0$. In this case, we know that $\mathbf X\hat y=\mathbf e$ with $\hat y$ defined in \eqref{eq:ydef}. Then, it is easy to check that
\begin{gather*}
U_1^*=\frac{\mathbf e\mathbf e^T}{\norm{\mathbf e}^2}, \quad U_2^*=0, \\
y^*=\frac{\hat y}{2\norm{\mathbf e}^2}, \quad W^*=0, \\
\lambda^*=\frac{\norm{y^*}}{\kappa}, \quad G^*=\frac{1}{\lambda^*}y^*y^{*T}
\end{gather*}
forms a feasible solution to the dual problem \eqref{eq:etaoptdual} whose objective value is $2\kappa\norm{y^*}$, which is at most $\kappa/(\sqrt 2\zeta\norm{\mathbf e})$ by the inequality \eqref{eq:ynorm}.
\end{proof}

\begin{lemma}\label{lem:lipschitzbound}
Consider Algorithm~\ref{alg:perturbed} for solving the symmetric problem \eqref{eq:symrec}. If the initial matrix $X_0$ satisfies
\[
\norm{X_0X_0^T}_F \leq D,
\]
the step size $\eta$ satisfies
\[
1/\eta \geq 48\rho_1 r^{1/2}\left(\frac{1+\delta}{1-\delta}D\right),
\]
and the perturbation size $w$ satisfies
\[
2wr^{1/4}\left(\frac{1+\delta}{1-\delta}\right)^{1/4}\sqrt{3D}+w^2 \leq \sqrt{\frac{1+\delta}{1-\delta}}D,
\]
then during the first loop the trajectory $X_t$ is always confined in the region
\[
\mathcal D=\left\{X \in \RR^{n \times r} \middle| \norm{XX^T-M^*}_F \leq 3\left(\frac{1+\delta}{1-\delta}\right)D\right\}.
\]
\end{lemma}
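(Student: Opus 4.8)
The plan is to prove the claim by a single induction on the iteration index $t$ inside the first loop, simultaneously keeping track of $g_s(X_t)$. Inside the first loop the iterate is modified in two ways: by a gradient step $X_{t+1}=X_t-\eta\nabla g_s(X_t)$, and, occasionally at the top of the loop, by a perturbation $X_t\gets X_t+W$ with $\norm{W}_F\le w$. Gradient steps will be controlled through Lemma~\ref{lem:onestep} applied with $R=3\frac{1+\delta}{1-\delta}D$: since $12\rho_1r^{1/2}(R+D)\le 48\rho_1r^{1/2}\frac{1+\delta}{1-\delta}D$, the hypothesis on $\eta$ guarantees that whenever $\norm{X_tX_t^T-M^*}_F\le R$ one has $g_s(X_{t+1})\le g_s(X_t)$; combined with \eqref{eq:levelset} this gives $\norm{X_{t+1}X_{t+1}^T-M^*}_F\le\sqrt{\frac{1+\delta}{1-\delta}}\,\norm{YY^T-M^*}_F$, where $Y$ is the most recent ``restart'' point, i.e.\ either $X_0$ or the output of the last perturbation.

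The first substantial step is to bound the function value at the points recorded just before a perturbation; call these points $\tilde X^{(1)},\tilde X^{(2)},\dots$ I claim $g_s(\tilde X^{(k)})\le g_s(X_0)$ for all $k$. This holds for $k=1$ since only gradient steps precede the first perturbation. For the inductive step, the break test in Algorithm~\ref{alg:perturbed} is precisely what forces, whenever the loop continues past the $k$-th perturbation, the inequality $g_s(X_{t_{\text{noise}}+t_{\text{thres}}})\le g_s(\tilde X^{(k)})-f_{\text{thres}}$; and only gradient steps occur between that time and the next perturbation, so $g_s(\tilde X^{(k+1)})\le g_s(\tilde X^{(k)})\le g_s(X_0)$. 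Using \eqref{eq:globalrip} and $\norm{X_0X_0^T-M^*}_F\le 2D$, this yields $g_s(\tilde X^{(k)})-f_s(M^*)\le 2(1+\delta)D^2$, hence $\norm{\tilde X^{(k)}(\tilde X^{(k)})^T-M^*}_F\le 2\sqrt{\frac{1+\delta}{1-\delta}}D$, hence $\norm{\tilde X^{(k)}(\tilde X^{(k)})^T}_F\le 3\sqrt{\frac{1+\delta}{1-\delta}}D$, and finally, by \eqref{eq:normofproduct}, $\norm{\tilde X^{(k)}}_F\le r^{1/4}\bigl(\frac{1+\delta}{1-\delta}\bigr)^{1/4}\sqrt{3D}$.

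The second step quantifies a perturbation. If $Y=\tilde X^{(k)}+W$ with $\norm{W}_F\le w$, then $\norm{YY^T-\tilde X^{(k)}(\tilde X^{(k)})^T}_F\le 2\norm{\tilde X^{(k)}}_F\,w+w^2\le 2wr^{1/4}\bigl(\frac{1+\delta}{1-\delta}\bigr)^{1/4}\sqrt{3D}+w^2$, which by the hypothesis on $w$ is at most $\sqrt{\frac{1+\delta}{1-\delta}}D$. Therefore $\norm{YY^T-M^*}_F\le 3\sqrt{\frac{1+\delta}{1-\delta}}D\le 3\frac{1+\delta}{1-\delta}D$, so the iterate immediately after any perturbation is again in $\mathcal D$; the same bound $\norm{X_0X_0^T-M^*}_F\le 2D\le 3\sqrt{\frac{1+\delta}{1-\delta}}D$ holds at $t=0$.

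Finally I close the induction. The first-loop trajectory splits into consecutive segments, each starting at a restart point $Y$ (either $X_0$ or a perturbation output) and consisting of gradient steps up to the next perturbation or the break. For each such $Y$ the previous step gives $\norm{YY^T-M^*}_F\le 3\sqrt{\frac{1+\delta}{1-\delta}}D$, so $Y\in\mathcal D$; and if $X_t\in\mathcal D$ for some $t$ in the segment, then Lemma~\ref{lem:onestep} applies, $g_s(X_{t+1})\le g_s(X_t)\le g_s(Y)$, and \eqref{eq:levelset} gives $\norm{X_{t+1}X_{t+1}^T-M^*}_F\le\sqrt{\frac{1+\delta}{1-\delta}}\,\norm{YY^T-M^*}_F\le 3\frac{1+\delta}{1-\delta}D$, i.e.\ $X_{t+1}\in\mathcal D$. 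If the loop exits through the break, the point is reset to $\tilde X^{(k)}$, which is a gradient iterate of the current segment and hence already known to lie in $\mathcal D$. This covers every iterate, proving the lemma. The only genuinely delicate point --- and the place I would be most careful --- is the function-value bookkeeping across perturbations in the second paragraph: it is essential to carry the bound $g_s(\tilde X^{(k)})\le g_s(X_0)$ (rather than merely $\tilde X^{(k)}\in\mathcal D$), since only the resulting $\sqrt{\frac{1+\delta}{1-\delta}}$-type bound on $\norm{\tilde X^{(k)}}_F$ matches the hypothesis on $w$, and one must order the single induction on $t$ so that this estimate and the invariant $X_t\in\mathcal D$ are never used circularly.
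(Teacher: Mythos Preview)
Your proposal is correct and follows essentially the same approach as the paper's proof: both arguments hinge on (i) maintaining the inequality $g_s(\tilde X^{(k)})\le g_s(X_0)$ for the points recorded immediately before each perturbation via the break test, (ii) converting this into the bound $\norm{\tilde X^{(k)}(\tilde X^{(k)})^T-M^*}_F\le 2\sqrt{\tfrac{1+\delta}{1-\delta}}D$ (the paper names this region $\mathcal D_1$), (iii) using the hypothesis on $w$ to keep the perturbed point inside $\mathcal D$, and (iv) invoking Lemma~\ref{lem:onestep} with $R=3\tfrac{1+\delta}{1-\delta}D$ together with \eqref{eq:levelset} to propagate the bound along gradient steps. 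Your segment-wise organization and explicit flagging of the circularity concern are a bit tidier than the paper's more narrative induction; the only slight slip is that at the break the reset point $\tilde X^{(k)}$ is the last iterate of the \emph{previous} segment rather than the current one, but since you have already established $\tilde X^{(k)}\in\mathcal D_1\subset\mathcal D$ this does not affect the argument.
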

\begin{proof}
For convenience, we introduce the set
\[
\mathcal D_1=\left\{X \in \RR^{n \times r} \middle| \norm{XX^T-M^*}_F \leq 2\sqrt{\frac{1+\delta}{1-\delta}}D\right\}.
\]
The iteration is initialized at the point $X_0 \in \mathcal D_1$. Assume that at some time instance $t$ the current matrix $X_t \in \mathcal D_1$, $g_s(X_t) \leq g_s(X_0)$, and some perturbation needs to be added because $\norm{\nabla g_s(X_t)}_F$ is small. In this case, a random noise $W$ is generated from the uniform distribution in the ball of radius $w$. The algorithm saves the original point $X_t$ to $\tilde X_t$ and replaces $X_t$ with $X_t+W$. Then, similar to the inequality \eqref{eq:normofproduct}, the old point $\tilde X_t$ satisfies
\[
\norm{\tilde X_t}_F \leq r^{1/4}\left(\frac{1+\delta}{1-\delta}\right)^{1/4}\sqrt{3D},
\]
and thus the new point $X_t$ satisfies
\begin{align*}
\norm{X_tX_t^T-M^*}_F &\leq \norm{\tilde X_t\tilde X_t^T-M^*}_F+\norm{W\tilde X_t^T+X_t\tilde W^T}_F+\norm{WW^T}_F \\
&\leq 2\sqrt{\frac{1+\delta}{1-\delta}}D+2wr^{1/4}\left(\frac{1+\delta}{1-\delta}\right)^{1/4}\sqrt{3D}+w^2 \\
&\leq 3\sqrt{\frac{1+\delta}{1-\delta}}D,
\end{align*}
by our choice of the parameter $w$. Due to the design of the perturbed gradient descent algorithm, the perturbation will never be taken in the next $t_{\text{thres}}$ number of iterations ($t_{\text{thres}}$ is defined in Algorithm~\ref{alg:perturbed}). As a result, Lemma~\ref{lem:onestep}, $X_t \in \mathcal D$ and our choice of the step size $\eta$ imply that $g_s(X_{t+1}) \leq g_s(X_t)$. Hence, the inequality \eqref{eq:levelset} gives
\[
\norm{X_{t+1}X_{t+1}^T-M^*}_F \leq \sqrt{\frac{1+\delta}{1-\delta}}\norm{X_tX_t^T-M^*}_F \leq 3\left(\frac{1+\delta}{1-\delta}\right)D,
\]
which shows that $X_{t+2} \in \mathcal D$. Repeating this argument, it can be concluded that $g_s(X_{t+k}) \leq g_s(X_t)$ and $X_{t+k} \in \mathcal D$ for all $k=1,\dots,t_{\text{thres}}$. After $X_{t+t_{\text{thres}}}$ is obtained, the algorithm compares $g_s(X_{t+t_{\text{thres}}})$ with $g_s(\tilde X_t)$, and the iteration continues only if $g_s(X_{t+t_{\text{thres}}}) \leq g_s(\tilde X_t)$. When this is the case, $g_s(X_{t+t_{\text{thres}}}) \leq g_s(X_0)$, and by the inequality \eqref{eq:levelset} again, we have
\[
\norm{X_{t+t_{\text{thres}}}X_{t+t_{\text{thres}}}^T-M^*}_F \leq \sqrt{\frac{1+\delta}{1-\delta}}\norm{X_0X_0^T-M^*}_F \leq 2\sqrt{\frac{1+\delta}{1-\delta}}D,
\]
and thus $X_{t+t_{\text{thres}}} \in \mathcal D_1$. Assume that no perturbation is added at steps $t+t_{\text{thres}}+1,\dots,t+t_{\text{thres}}+l-1$. Then, using a similar argument as above, we can prove that
\[
g_s(X_{t+t_{\text{thres}}+k}) \leq g_s(X_{t+t_{\text{thres}}}) \leq g_s(X_0), \quad X_{t+t_{\text{thres}}+k} \in \mathcal D_1, \quad \forall k=1,\dots,l-1.
\]
If perturbation needs to be added at step $t+t_{\text{thres}}+l$, we can repeat the above argument with $t+t_{\text{thres}}+l$ instead of $t$, which leads to the desired result.
\end{proof}

\begin{proof}[Proof of Theorem~\ref{thm:globalconvlin}]
In the first stage of the algorithm, the perturbed gradient descent method is applied. If the parameter $c$ is sufficiently small, then the step size $\eta$ and the perturbation size $w$ will satisfy the assumptions in Lemma~\ref{lem:lipschitzbound}. In this case, Lemma~\ref{lem:lipschitzbound} implies that the iterations are taken within a region in which $\nabla g_s$ and $\nabla^2g_s$ are Lipschitz continuous. Let $\kappa$ be the constant given by Lemma~\ref{lem:strictsaddlelin} such that the approximate second-order necessary optimality conditions \eqref{eq:approxoptimalitycond} will imply that $\norm{XX^T-ZZ^T}_F<C$, where
\[
C=2(\sqrt 2-1)(1-\delta)\sigma_r(M^*)
\]
is the radius of the local linear convergence region provided by Theorem~\ref{thm:localconvsym}. Now, Theorem~3 in \citet{JGNK2017} shows that with probability $1-\gamma$ the first loop will stop with a solution $\tilde X$ satisfying \eqref{eq:approxoptimalitycond}, and thus $\tilde X$ is within the local convergence region. Note that the number of iterations in this stage is fixed for a given initial matrix $X_0$, and that this number is independent of $\epsilon$.

Next, the gradient descent algorithm is run with initialization at the matrix $\tilde X$. Theorem~\ref{thm:localconvsym} implies that after an additional $O(\log(1/\epsilon))$ number of iterations we find a solution $\hat X$ satisfying the accuracy requirement.
\end{proof}

\section{Reformulation of RIP-Constrained Optimization}

In this section, we will prove the following lemma that is used in Appendix~\ref{app:localproof} and Appendix~\ref{app:globalproof}, which is a generalization of Theorem~8 in \citet{ZSL2019}.

\begin{lemma}\label{lem:lmireform}
For every $a,b \geq 0$, the following two optimization problems
\begin{equation}\label{eq:opt}
\begin{aligned}
\min_{\delta,\mathbf H} \quad & \delta \\
\st \quad & \norm{\mathbf X^T\mathbf H\mathbf e} \leq a, \\
& 2I_r \otimes \mat_S(\mathbf H\mathbf e)+(1+\delta)\mathbf X^T\mathbf X \succeq -bI_{nr}, \\
& \text{$\mathbf H$ is symmetric and satisfies $\delta$-$\RIP_{2r}$},
\end{aligned}
\end{equation}
and
\begin{equation}\label{eq:lmi}
\begin{aligned}
\min_{\delta,\mathbf H} \quad & \delta \\
\st \quad & \begin{bmatrix}
I_{nr} & \mathbf X^T\mathbf H\mathbf e \\
(\mathbf X^T\mathbf H\mathbf e)^T & a^2
\end{bmatrix} \succeq 0, \\
& 2I_r \otimes \mat_S(\mathbf H\mathbf e)+(1+\delta)\mathbf X^T\mathbf X \succeq -bI_{nr}, \\
& (1-\delta)I_{n^2} \preceq \mathbf H \preceq (1+\delta)I_{n^2},
\end{aligned}
\end{equation}
have the same optimal value.
\end{lemma}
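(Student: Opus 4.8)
The plan is to dispense with the two differences between problems~\eqref{eq:opt} and~\eqref{eq:lmi} one at a time. The first difference is superficial: by the Schur complement applied to the block $I_{nr}\succ 0$ (and using $a\ge 0$), the inequality $\norm{\mathbf X^T\mathbf H\mathbf e}\le a$ holds if and only if $\begin{bmatrix} I_{nr} & \mathbf X^T\mathbf H\mathbf e\\ (\mathbf X^T\mathbf H\mathbf e)^T & a^2\end{bmatrix}\succeq 0$. Since the second-order constraint $2I_r\otimes\mat_S(\mathbf H\mathbf e)+(1+\delta)\mathbf X^T\mathbf X\succeq -bI_{nr}$ is literally the same in both problems, everything reduces to showing that, with that constraint in place, one may replace ``$\mathbf H$ symmetric and $\delta$-$\RIP_{2r}$'' by ``$(1-\delta)I_{n^2}\preceq\mathbf H\preceq(1+\delta)I_{n^2}$'' without changing the optimal value of $\delta$.

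One inequality is free. If $(1-\delta)I_{n^2}\preceq\mathbf H\preceq(1+\delta)I_{n^2}$, then $(1-\delta)\norm K_F^2\le[\mathbf H](K,K)\le(1+\delta)\norm K_F^2$ for \emph{every} matrix $K$, hence in particular for all $K$ of rank at most $2r$, so $\mathbf H$ is $\delta$-$\RIP_{2r}$. Thus every feasible point of~\eqref{eq:lmi} is feasible for~\eqref{eq:opt} with the same $\delta$, which gives $\OPT\eqref{eq:opt}\le\OPT\eqref{eq:lmi}$.

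For the reverse inequality I would follow the strategy behind Theorem~8 in \citet{ZSL2019}: turn a point $(\delta,\mathbf H)$ feasible for~\eqref{eq:opt} into a point $(\delta,\mathbf H')$ feasible for~\eqref{eq:lmi}. The starting point is that both remaining constraints read $\mathbf H$ only through bilinear pairings $\mathbf e^T\mathbf H\vect(N)$ in which $\mathbf e=\vect(XX^T-M^*)$ has rank at most $2r$ and $N$ ranges over the matrices $XU^T+UX^T$ (first constraint) and $VV^T$ (second constraint), each again of rank at most $2r$; equivalently, the two constraints depend on $\mathbf H$ only through the single symmetric matrix $\mat_S(\mathbf H\mathbf e)$, and $\mathbf X^T\mathbf H\mathbf e=\mathbf X^T\vect(\mat_S(\mathbf H\mathbf e))$ because the columns of $\mathbf X$ are vectorizations of symmetric matrices. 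The idea is then to take $\mathbf H'=P\mathbf HP+(I_{n^2}-P)$, with $P$ the orthogonal projection onto a subspace $\mathcal V\subseteq\RR^{n^2}$ arranged so that (i) every $u\in\mathcal V$ is the vectorization of a matrix of rank at most $2r$, whence the $\delta$-$\RIP_{2r}$ property forces $(1-\delta)\norm u^2\le u^T\mathbf Hu\le(1+\delta)\norm u^2$ for all $u\in\mathcal V$ and, $\mathbf H'$ being block diagonal in $\mathcal V\oplus\mathcal V^\perp$ with second block the identity, $(1-\delta)I_{n^2}\preceq\mathbf H'\preceq(1+\delta)I_{n^2}$; and (ii) $\mathcal V$ contains $\mathbf e$ together with the probe vectors, so that $\mathbf e^T\mathbf H'\vect(N)=\mathbf e^T\mathbf H\vect(N)$ and the two remaining constraints transfer verbatim. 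Combined with the Schur-complement reformulation of the first constraint, this yields $\OPT\eqref{eq:opt}\ge\OPT\eqref{eq:lmi}$, hence equality.

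The hard part is the tension between (i) and (ii). The probe matrices $XU^T+UX^T$, $VV^T$, and $XX^T-M^*$ are individually of rank at most $2r$, but the subspace they span is not --- for instance $t(XX^T-M^*)+XU^T+UX^T$ may have rank up to $3r$, since $M^*=ZZ^T$ has a range direction independent of that of $X$, and $\{VV^T:V\in\RR^{n\times r}\}$ already spans all symmetric matrices --- so $\mathcal V$ cannot simply be taken to be that span. Resolving this is exactly where the factored structure must be used: one writes $\mathbf e=\vect(XX^T)-\vect(M^*)$ as a difference of two rank-$\le r$ matrices and uses that the sum and difference of two rank-$\le r$ matrices have rank at most $2r$, so that the $\delta$-$\RIP_{2r}$ property \emph{does} polarize on pairs of rank-$\le r$ matrices, which is enough to pin down the relevant pairings; equivalently, one shows that the semidefinite program~\eqref{eq:lmi} admits an optimal dual solution whose matrix multipliers are supported on vectorized rank-$\le 2r$ matrices, so the resulting dual bound also certifies~\eqref{eq:opt}. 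This rank bookkeeping, rather than the elementary reductions above, is the technical heart of the lemma and the content of the generalization of Theorem~8 in \citet{ZSL2019}.
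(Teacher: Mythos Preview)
Your Schur-complement reduction and the easy direction $\OPT\eqref{eq:opt}\le\OPT\eqref{eq:lmi}$ are correct. The gap is in the reverse inequality.

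Your construction $\mathbf H'=P\mathbf HP+(I_{n^2}-P)$ does not do what you claim. Even if $\mathbf e\in\mathcal V$ (so $P\mathbf e=\mathbf e$), you only get $\mathbf H'\mathbf e=P\mathbf H\mathbf e$, not $\mathbf H\mathbf e$; there is no reason for $\mathbf H\mathbf e$ to lie in $\mathcal V$. Consequently $\mat_S(\mathbf H'\mathbf e)$ is the compression of $\mat_S(\mathbf H\mathbf e)$ to the corresponding row/column subspace, and the second-order constraint $2I_r\otimes\mat_S(\mathbf H\mathbf e)+(1+\delta)\mathbf X^T\mathbf X\succeq -bI_{nr}$---which tests \emph{all} eigendirections of $\mat_S(\mathbf H\mathbf e)$, including those outside the subspace---need not carry over. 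The first constraint fails for the same reason: the columns of $\mathbf X$ are vectorizations of matrices of the form $Xe_k e_j^T+e_je_k^TX^T$, whose column space contains $e_j$, so they do not lie in any subspace $\mathcal V$ satisfying your condition~(i) unless $d=n$. Thus $\mathbf X^TP\neq\mathbf X^T$ and $\|\mathbf X^T\mathbf H'\mathbf e\|$ is not controlled by $\|\mathbf X^T\mathbf H\mathbf e\|$. Your final paragraph acknowledges the tension but does not resolve it; the polarization remark and the appeal to a rank-supported dual solution are too vague to constitute an argument.

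The paper's resolution is different in structure. It fixes an orthonormal $P\in\RR^{n\times d}$ whose columns span $\mathrm{col}(X)+\mathrm{col}(Z)$ (so $d\le 2r$), sets $\mathbf P=P\otimes P$, and introduces an auxiliary problem $\overline\LMI(X,Z)$ in which the eigenvalue constraint is imposed only on the compression $\mathbf P^T\mathbf H\mathbf P$. Since every $d\times d$ matrix has rank $\le 2r$, the $\delta$-$\RIP_{2r}$ condition implies that compressed constraint, giving the chain $\LMI(X,Z)\ge\OPT(X,Z)\ge\overline\LMI(X,Z)$. The loop is closed by passing to the reduced variables $\hat X=P^TX$, $\hat Z=P^TZ$ and proving $\LMI(X,Z)\le\LMI(\hat X,\hat Z)\le\overline\LMI(X,Z)$. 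The first of these two inequalities is a primal lifting $\mathbf H=\mathbf P\hat{\mathbf H}\mathbf P^T+(I-\mathbf P\mathbf P^T)$---superficially like your $\mathbf H'$, but applied in the opposite direction, from the reduced problem to the full one, where $\mat_S(\mathbf H\mathbf e)=P\,\mat_S(\hat{\mathbf H}\hat{\mathbf e})\,P^T$ by construction and the LMI splits cleanly along $I_r\otimes P$ and $I_r\otimes P_\perp$. The second inequality cannot be done primally and is proved by writing out the duals of $\overline\LMI(X,Z)$ and $\LMI(\hat X,\hat Z)$ and exhibiting an explicit map between dual feasible points. So the missing idea is the dimension reduction to $d\le 2r$ combined with a genuine dual argument, not a direct primal modification of $\mathbf H$.
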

\begin{proof}
Let $\OPT(X,Z)$ be the optimal value of \eqref{eq:opt} and $\LMI(X,Z)$ be the optimal value of \eqref{eq:lmi}. Our goal is to prove that $\OPT(X,Z)=\LMI(X,Z)$ for given $X,Z \in \RR^{n \times r}$. Let $(v_1,\dots,v_n)$ be an orthogonal basis of $\RR^n$ such that $(v_1,\dots,v_d)$ spans the column spaces of both $X$ and $Z$. Note that $d \leq 2r$. Let $P \in \RR^{n \times d}$ be the matrix with the columns $(v_1,\dots,v_d)$ and $P_\perp \in \RR^{n \times (n-d)}$ be the matrix with the columns $(v_{d+1},\dots,v_n)$. Then,
\begin{gather*}
P^TP=I_d, \quad P_\perp^TP_\perp=I_{n-d}, \quad P_\perp^TP=0, \quad P^TP_\perp=0, \\
PP^T+P_\perp P_\perp^T=I_n, \quad PP^TX=X, \quad PP^TZ=Z.
\end{gather*}
Define $\mathbf P=P \otimes P$. Consider the auxiliary optimization problem
\begin{equation}\label{eq:aux}
\begin{aligned}
\min_{\delta,\mathbf H} \quad & \delta \\
\st \quad & \begin{bmatrix}
I_{nr} & \mathbf X^T\mathbf H\mathbf e \\
(\mathbf X^T\mathbf H\mathbf e)^T & a^2
\end{bmatrix} \succeq 0, \\
& 2I_r \otimes \mat_S(\mathbf H\mathbf e)+(1+\delta)\mathbf X^T\mathbf X \succeq -bI_{nr}, \\
& (1-\delta)I_{d^2} \preceq \mathbf P^T\mathbf H\mathbf P \preceq (1+\delta)I_{d^2},
\end{aligned}
\end{equation}
and denote its optimal value as the function $\overline\LMI(X,Z)$. Given an arbitrary symmetric matrix $\mathbf H \in \RR^{n ^2 \times n^2}$, if $\mathbf H$ satisfies the last constraint in \eqref{eq:lmi}, then it obviously satisfies $\delta$-$\RIP_{2r}$ and subsequently the last constraint in \eqref{eq:opt}. On the other hand, if $\mathbf H$ satisfies the last constraint in \eqref{eq:opt}, for every matrix $Y \in \RR^{d \times d}$ with $\mathbf Y=\vect(Y)$, since $\rank(PYP^T) \leq d \leq 2r$ and $\vect(PYP^T)=\mathbf P\mathbf Y$, by $\delta$-$\RIP_{2r}$ property, one arrives at
\[
(1-\delta)\norm{\mathbf Y}^2=(1-\delta)\norm{\mathbf P\mathbf Y}^2 \leq (\mathbf P\mathbf Y)^T\mathbf H\mathbf P\mathbf Y \leq (1+\delta)\norm{\mathbf P\mathbf Y}^2=(1+\delta)\norm{\mathbf Y}^2,
\]
which implies that $\mathbf H$ satisfies the last constraint in \eqref{eq:aux}. Moreover, since the first constraint in \eqref{eq:opt} and the first constraint in \eqref{eq:lmi} and \eqref{eq:aux} are equivalent, the above discussion implies that
\[
\LMI(X,Z) \geq \OPT(X,Z) \geq \overline\LMI(X,Z).
\]
Let
\[
\hat X=P^TX, \quad \hat Z=P^TZ.
\]
Lemma~\ref{lem:ineq1} and Lemma~\ref{lem:ineq2} to be stated later will show that
\[
\LMI(X,Z) \leq \LMI(\hat X,\hat Z) \leq \overline\LMI(X,Z),
\]
which gives $\OPT(X,Z)=\LMI(X,Z)$.
\end{proof}

Before stating Lemma~\ref{lem:ineq1} and Lemma~\ref{lem:ineq2} that were needed in the proof of Lemma~\ref{lem:lmireform}, we should first state a preliminary result below.

\begin{lemma}
Define $\hat{\mathbf e}$ and $\hat{\mathbf X}$ in the same way as $\mathbf e$ and $\mathbf X$, except that $X$ and $Z$ are replaced by $\hat X$ and $\hat Z$, respectively. Then, it holds that
\begin{align*}
\mathbf e&=\mathbf P\hat{\mathbf e}, \\
\mathbf X(I_r \otimes P)&=\mathbf P\hat{\mathbf X}, \\
\mathbf P^T\mathbf X&=\hat{\mathbf X}(I_r \otimes P)^T.
\end{align*}
\end{lemma}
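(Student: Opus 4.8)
The plan is to derive all three identities from the single elementary fact that $(A \otimes B)\vect(C) = \vect(BCA^T)$ for conformable matrices, together with the relations $P^TP = I_d$, $PP^TX = X$, and $PP^TZ = Z$ recorded in the proof of Lemma~\ref{lem:lmireform}. Since $\mathbf X$ and $\hat{\mathbf X}$ are \emph{defined} by their action on vectorized matrices, I would prove each matrix identity by applying both sides to an arbitrary vectorized matrix and matching the results.

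For $\mathbf e = \mathbf P\hat{\mathbf e}$, write $\mathbf P\hat{\mathbf e} = (P \otimes P)\vect(\hat X\hat X^T - \hat Z\hat Z^T) = \vect\bigl(P(\hat X\hat X^T - \hat Z\hat Z^T)P^T\bigr)$; substituting $\hat X = P^TX$ and using $PP^TX = X$ gives $P\hat X\hat X^TP^T = (PP^TX)(PP^TX)^T = XX^T$, and similarly $P\hat Z\hat Z^TP^T = ZZ^T = M^*$, so the right-hand side is $\vect(XX^T - M^*) = \mathbf e$.

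For $\mathbf X(I_r \otimes P) = \mathbf P\hat{\mathbf X}$, fix $U \in \RR^{d \times r}$ and observe $(I_r \otimes P)\vect(U) = \vect(PU)$, hence the left side applied to $\vect(U)$ equals $\mathbf X\vect(PU) = \vect\bigl(X(PU)^T + (PU)X^T\bigr)$. For the right side, $\mathbf P\hat{\mathbf X}\vect(U) = (P\otimes P)\vect(\hat XU^T + U\hat X^T) = \vect\bigl(P(\hat XU^T + U\hat X^T)P^T\bigr)$, and using $P\hat X = PP^TX = X$ and $\hat X^TP^T = X^TPP^T = X^T$ this equals $\vect\bigl(X(PU)^T + (PU)X^T\bigr)$, matching the left side; since $U$ was arbitrary the two linear maps coincide. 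The identity $\mathbf P^T\mathbf X = \hat{\mathbf X}(I_r \otimes P)^T$ is proved the same way: for $U \in \RR^{n \times r}$, the left side gives $\mathbf P^T\vect(XU^T + UX^T) = \vect\bigl(P^T(XU^T + UX^T)P\bigr) = \vect\bigl(\hat X(P^TU)^T + (P^TU)\hat X^T\bigr)$ using $P^TX = \hat X$, while the right side, using $(I_r \otimes P)^T = I_r \otimes P^T$, gives $\hat{\mathbf X}\vect(P^TU) = \vect\bigl(\hat X(P^TU)^T + (P^TU)\hat X^T\bigr)$.

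I do not expect a genuine obstacle in this lemma: it is pure linear algebra once the vectorization identities are in hand. The only point requiring care is bookkeeping the dimensions so that $\mathbf X$ and $\hat{\mathbf X}$ are evaluated on arguments of the correct size — namely $PU \in \RR^{n\times r}$ in the second identity and $P^TU \in \RR^{d\times r}$ in the third — and keeping track of which Kronecker factor is transposed when moving between $\vect$ and matrix products.
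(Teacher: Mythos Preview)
Your proof is correct and is essentially identical to the paper's: both verify each identity by applying the two sides to an arbitrary vectorized matrix and reducing via $(A\otimes B)\vect(C)=\vect(BCA^T)$ together with $P^TX=\hat X$, $P\hat X=X$, and $P^TP=I_d$. The only cosmetic difference is the order in which you manipulate the left and right sides; the substance is the same.
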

\begin{proof}
Observe that
\begin{align*}
\mathbf e&=\vect(XX^T-ZZ^T)=\vect(P(\hat X\hat X^T-\hat Z\hat Z^T)P^T)=\mathbf P\hat{\mathbf e}, \\
\mathbf X(I_r \otimes P)\vect(\hat U)&=\mathbf X\vect(P\hat U)=\vect(X\hat U^TP^T+P\hat UX^T)=\vect(P(\hat X\hat U^T+\hat U\hat X^T)P^T)=\mathbf P\hat{\mathbf X}\vect(\hat U), \\
\hat{\mathbf X}(I_r \otimes P)^T\vect(U)&=\hat{\mathbf X}\vect(P^TU)=\vect(\hat XU^TP+P^TU\hat X^T)=\vect(P^T(XU^T+UX^T)P)=\mathbf P^T\mathbf X\vect(U),
\end{align*}
where $U \in \RR^{n \times r}$ and $\hat U \in \RR^{d \times r}$ are arbitrary matrices.
\end{proof}

\begin{lemma}\label{lem:ineq1}
The inequality $\LMI(\hat X,\hat Z) \geq \LMI(X,Z)$ holds.
\end{lemma}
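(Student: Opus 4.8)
The plan is to exhibit, for every feasible point $(\delta,\hat{\mathbf H})$ of \eqref{eq:lmi} written for $(\hat X,\hat Z)$ (so that $\hat{\mathbf H}\in\RR^{d^2\times d^2}$), a feasible point of \eqref{eq:lmi} written for $(X,Z)$ with the same objective value $\delta$; this immediately gives $\LMI(X,Z)\leq\LMI(\hat X,\hat Z)$. The lift I would use is
\[
\mathbf H:=\mathbf P\hat{\mathbf H}\mathbf P^T+(I_{n^2}-\mathbf P\mathbf P^T)\in\RR^{n^2\times n^2},
\]
i.e.\ conjugate $\hat{\mathbf H}$ onto $\mathrm{range}(\mathbf P)$ and place the identity on its orthogonal complement. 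Since $P^TP=I_d$ implies $\mathbf P^T\mathbf P=I_{d^2}$, the matrix $\mathbf P\mathbf P^T$ is an orthogonal projection and $\mathbf P$ is an isometry onto its range; hence $\mathbf H$ is symmetric, and its spectrum consists of the eigenvalues of $\hat{\mathbf H}$ (all contained in $[1-\delta,1+\delta]$ by feasibility) together with the eigenvalue $1\in[1-\delta,1+\delta]$. Therefore $(1-\delta)I_{n^2}\preceq\mathbf H\preceq(1+\delta)I_{n^2}$, which is the last constraint in \eqref{eq:lmi}.

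To check the remaining two constraints I would use the Kronecker identities recorded in the preceding lemma. From $\mathbf e=\mathbf P\hat{\mathbf e}$ and $\mathbf P^T\mathbf P=I_{d^2}$ one gets $\mathbf H\mathbf e=\mathbf P\hat{\mathbf H}\hat{\mathbf e}$. The first constraint is equivalent, by the Schur complement, to $\norm{\mathbf X^T\mathbf H\mathbf e}\leq a$; substituting $\mathbf P^T\mathbf X=\hat{\mathbf X}(I_r\otimes P)^T$ gives $\mathbf X^T\mathbf H\mathbf e=(I_r\otimes P)\hat{\mathbf X}^T\hat{\mathbf H}\hat{\mathbf e}$, and since $(I_r\otimes P)^T(I_r\otimes P)=I_{dr}$ the map $I_r\otimes P$ is an isometry, so $\norm{\mathbf X^T\mathbf H\mathbf e}=\norm{\hat{\mathbf X}^T\hat{\mathbf H}\hat{\mathbf e}}\leq a$ by the feasibility of $\hat{\mathbf H}$.

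The second constraint $2I_r\otimes\mat_S(\mathbf H\mathbf e)+(1+\delta)\mathbf X^T\mathbf X\succeq-bI_{nr}$ is the crux. Writing $\hat W$ for the matricization of $\hat{\mathbf H}\hat{\mathbf e}$, one has $\mathbf H\mathbf e=\vect(P\hat WP^T)$ and hence $\mat_S(\mathbf H\mathbf e)=P\,\mat_S(\hat{\mathbf H}\hat{\mathbf e})\,P^T$. I would then test the left-hand side against an arbitrary $V\in\RR^{n\times r}$, split as $V=PV_1+P_\perp V_2$ with $V_1=P^TV$ and $V_2=P_\perp^TV$ (so $\norm{V}_F^2=\norm{V_1}_F^2+\norm{V_2}_F^2$): the $\mat_S$ term contributes only $\tr(V_1^T\mat_S(\hat{\mathbf H}\hat{\mathbf e})V_1)$, while expanding $XV^T+VX^T$ using $X=PP^TX=P\hat X$ into the three mutually Frobenius-orthogonal blocks $P(\cdot)P^T$, $P(\cdot)P_\perp^T$, $P_\perp(\cdot)P^T$ yields $\norm{\mathbf X\vect(V)}^2=\norm{\hat{\mathbf X}\vect(V_1)}^2+2\norm{\hat XV_2^T}_F^2$. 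Thus the quadratic form in $\vect(V)$ equals the analogous quadratic form (from the second constraint of \eqref{eq:lmi} for $(\hat X,\hat Z)$) evaluated at $\vect(V_1)$, plus the nonnegative slack $2(1+\delta)\norm{\hat XV_2^T}_F^2$, so it is at least $-b\norm{V_1}_F^2\geq-b\norm{V}_F^2$ by the feasibility of $\hat{\mathbf H}$. As $V$ is arbitrary, the constraint follows.

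The main obstacle, though still a routine computation, is this block decomposition of $XV^T+VX^T$: verifying that its three pieces are Frobenius-orthogonal and confirming that the residual term $2(1+\delta)\norm{\hat XV_2^T}_F^2$ has the correct (nonnegative) sign. This is precisely the place where it is essential that the columns of $P$ were chosen to span the ranges of both $X$ and $Z$, so that $X=P\hat X$ and the $P_\perp$-components of $V$ produce only nonnegative slack. Everything else is bookkeeping with $\mathbf P=P\otimes P$ and $I_r\otimes P$ together with the identities already established in the preceding lemma.
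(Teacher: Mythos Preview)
Your proposal is correct and follows essentially the same route as the paper: the lift $\mathbf H=\mathbf P\hat{\mathbf H}\mathbf P^T+(I_{n^2}-\mathbf P\mathbf P^T)$ is exactly the construction used there, and the verification of the first and last constraints matches. For the second constraint the paper argues block-structurally---showing that $\mathbf S:=2I_r\otimes\mat_S(\mathbf H\mathbf e)+(1+\delta)\mathbf X^T\mathbf X+bI_{nr}$ is block-diagonal with respect to the orthogonal decomposition given by $I_r\otimes P$ and $I_r\otimes P_\perp$, with each diagonal block positive semidefinite---whereas you compute the quadratic form $\vect(V)^T\mathbf S\,\vect(V)$ directly after splitting $V=PV_1+P_\perp V_2$; these are two phrasings of the same computation.
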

\begin{proof}
Let $(\delta,\hat{\mathbf H})$ be an arbitrary feasible solution to the optimization problem defining $\LMI(\hat X,\hat Z)$. It is desirable to show that $(\delta,\mathbf H)$ with
\[
\mathbf H=\mathbf P\hat{\mathbf H}\mathbf P^T+(I_{n^2}-\mathbf P\mathbf P^T)
\]
is a feasible solution to the optimization problem defining $\LMI(X,Z)$, which directly proves the lemma. To this end, notice that
\[
\mathbf H-(1-\delta)I_{n^2}=\mathbf P(\hat{\mathbf H}-(1-\delta)I_{d^2})\mathbf P^T+\delta(I_{n^2}-\mathbf P\mathbf P^T),
\]
which is positive semidefinite because
\begin{align*}
I_{n^2}-\mathbf P\mathbf P^T&=(PP^T+P_\perp P_\perp^T) \otimes (PP^T+P_\perp P_\perp^T)-(PP^T) \otimes (PP^T) \\
&=(PP^T) \otimes (P_\perp P_\perp^T)+(P_\perp P_\perp^T) \otimes (PP^T)+(P_\perp P_\perp^T) \otimes (P_\perp P_\perp^T) \succeq 0.
\end{align*}
Similarly,
\[
\mathbf H-(1+\delta)I_{n^2} \preceq 0,
\]
and therefore the last constraint in \eqref{eq:lmi} is satisfied. Next, since
\[
\mathbf X^T\mathbf H\mathbf e=\mathbf X^T\mathbf H\mathbf P\hat{\mathbf e}=\mathbf X^T\mathbf P\hat{\mathbf H}\hat{\mathbf e}=(I_r \otimes P)\hat {\mathbf X}^T\hat{\mathbf H}\hat{\mathbf e},
\]
we have
\[
\norm{\mathbf X^T\mathbf H\mathbf e}^2=(\hat {\mathbf X}^T\hat{\mathbf H}\hat{\mathbf e})^T(I_r \otimes P^T)(I_r \otimes P)(\hat {\mathbf X}^T\hat{\mathbf H}\hat{\mathbf e})=\norm{\hat {\mathbf X}^T\hat{\mathbf H}\hat{\mathbf e}}^2,
\]
and thus the first constraint in \eqref{eq:lmi} is satisfied. Finally, by letting $W \in \RR^{d \times d}$ be the vector satisfying $\vect(W)=\hat{\mathbf H}\hat{\mathbf e}$, one can write
\[
\vect(PWP^T)=\mathbf P\vect(W)=\mathbf P\hat{\mathbf H}\hat{\mathbf e}.
\]
Hence,
\begin{align*}
2I_r \otimes \mat_S(\mathbf H\mathbf e)&=2I_r \otimes \mat_S(\mathbf H\mathbf P\hat{\mathbf e})=2I_r \otimes \mat_S(\mathbf P\hat{\mathbf H}\hat{\mathbf e})=I_r \otimes (P(W+W^T)P^T) \\
&=2I_r \otimes (P\mat_S(\hat{\mathbf H}\hat{\mathbf e})P^T)=2(I_r \otimes P)(I_r \otimes \mat_S(\hat{\mathbf H}\hat{\mathbf e}))(I_r \otimes P)^T.
\end{align*}
In addition,
\[
\mathbf X^T\mathbf X(I_r \otimes P)=\mathbf X^T\mathbf P\hat{\mathbf X}=(I_r \otimes P)\hat{\mathbf X}^T\hat{\mathbf X}.
\]
Therefore, by defining
\[
\mathbf S:=2I_r \otimes \mat_S(\mathbf H\mathbf e)+(1+\delta)\mathbf X^T\mathbf X+bI_{nr},
\]
we have
\begin{align*}
(I_r \otimes P)^T\mathbf S(I_r \otimes P)&=2I_r \otimes \mat_S(\hat{\mathbf H}\hat{\mathbf e})+(1+\delta)\hat{\mathbf X}^T\hat{\mathbf X}+bI_{dr} \succeq 0, \\
(I_r \otimes P_\perp)^T\mathbf S(I_r \otimes P_\perp)&=(1+\delta)(I_r \otimes P_\perp)^T\mathbf X^T\mathbf X(I_r \otimes P_\perp)+bI_{(n-d)r} \succeq 0, \\
(I_r \otimes P_\perp)^T\mathbf S(I_r \otimes P)&=0.
\end{align*}
Since the columns of $I_r \otimes P$ and $I_r \otimes P_\perp$ form a basis for $\RR^{nr}$, the above inequalities imply that $\mathbf S$ is positive semidefinite, and thus the second constraint in \eqref{eq:lmi} is satisfied.
\end{proof}

\begin{lemma}\label{lem:ineq2}
The inequality $\overline\LMI(X,Z) \geq \LMI(\hat X,\hat Z)$ holds.
\end{lemma}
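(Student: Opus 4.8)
The plan is to show that every feasible solution of the auxiliary problem \eqref{eq:aux} compresses to a feasible solution of the problem defining $\LMI(\hat X,\hat Z)$ with the same objective value $\delta$; this is the "compression" counterpart of the "extension" argument in Lemma~\ref{lem:ineq1}. Concretely, given a feasible pair $(\delta,\mathbf H)$ for \eqref{eq:aux}, I would take
\[
\hat{\mathbf H}=\mathbf P^T\mathbf H\mathbf P\in\RR^{d^2\times d^2},
\]
which is symmetric because $\mathbf H$ is, and then check the three constraints of the $\LMI(\hat X,\hat Z)$ problem in turn. The bound $(1-\delta)I_{d^2}\preceq\hat{\mathbf H}\preceq(1+\delta)I_{d^2}$ is immediate, since it is exactly the third constraint of \eqref{eq:aux}.

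For the first (Schur-complement) constraint, I would use the preliminary lemma to write $\hat{\mathbf X}^T\hat{\mathbf H}\hat{\mathbf e}=\hat{\mathbf X}^T\mathbf P^T\mathbf H\mathbf P\hat{\mathbf e}=\hat{\mathbf X}^T\mathbf P^T\mathbf H\mathbf e=(I_r\otimes P)^T\mathbf X^T\mathbf H\mathbf e$, where the last equality follows from $\mathbf P\hat{\mathbf e}=\mathbf e$ together with the transpose of $\mathbf X(I_r\otimes P)=\mathbf P\hat{\mathbf X}$. Because $(I_r\otimes P)(I_r\otimes P)^T=I_r\otimes PP^T$ is an orthogonal projection, this yields $\norm{\hat{\mathbf X}^T\hat{\mathbf H}\hat{\mathbf e}}\le\norm{\mathbf X^T\mathbf H\mathbf e}\le a$, and $\norm{\hat{\mathbf X}^T\hat{\mathbf H}\hat{\mathbf e}}\le a$ is equivalent (Schur complement, $a\ge0$) to the required linear matrix inequality.

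For the second constraint, the key computations are: (i) $\hat{\mathbf H}\hat{\mathbf e}=\mathbf P^T\mathbf H\mathbf e=\vect(P^TWP)$ where $\vect(W)=\mathbf H\mathbf e$, hence $\mat_S(\hat{\mathbf H}\hat{\mathbf e})=P^T\mat_S(\mathbf H\mathbf e)P$ and so $2I_r\otimes\mat_S(\hat{\mathbf H}\hat{\mathbf e})=(I_r\otimes P)^T(2I_r\otimes\mat_S(\mathbf H\mathbf e))(I_r\otimes P)$; and (ii) using $\mathbf P^T\mathbf P=I_{d^2}$ with $\mathbf P\hat{\mathbf X}=\mathbf X(I_r\otimes P)$, one gets $\hat{\mathbf X}^T\hat{\mathbf X}=(I_r\otimes P)^T\mathbf X^T\mathbf X(I_r\otimes P)$, and also $(I_r\otimes P)^T(bI_{nr})(I_r\otimes P)=bI_{dr}$. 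Adding these, the matrix $2I_r\otimes\mat_S(\hat{\mathbf H}\hat{\mathbf e})+(1+\delta)\hat{\mathbf X}^T\hat{\mathbf X}+bI_{dr}$ equals $(I_r\otimes P)^T\mathbf S(I_r\otimes P)$ with $\mathbf S:=2I_r\otimes\mat_S(\mathbf H\mathbf e)+(1+\delta)\mathbf X^T\mathbf X+bI_{nr}\succeq0$ being the matrix from the second constraint of \eqref{eq:aux}; congruence preserves positive semidefiniteness, so the second constraint for $\hat X,\hat Z$ holds. Feasibility of $(\delta,\hat{\mathbf H})$ then gives $\overline\LMI(X,Z)\ge\LMI(\hat X,\hat Z)$.

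I expect the main obstacle to be purely bookkeeping: making sure that $\mat_S$ intertwines with conjugation by $P$ as claimed, that $\mathbf P^T\mathbf P=I_{d^2}$ is used correctly so that $\hat{\mathbf X}^T\hat{\mathbf X}$ compresses cleanly, and that every identity-matrix dimension matches after passing from size $n$ to size $d$. There is no substantive analytic difficulty; the argument mirrors Lemma~\ref{lem:ineq1} with $\mathbf P\hat{\mathbf H}\mathbf P^T+(I_{n^2}-\mathbf P\mathbf P^T)$ replaced by the restriction $\mathbf P^T\mathbf H\mathbf P$.
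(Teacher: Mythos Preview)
Your argument is correct, and it takes a genuinely different route from the paper. The paper proves the inequality by passing to the \emph{duals}: it writes down the Lagrangian dual of the problem defining $\LMI(\hat X,\hat Z)$, exhibits a strictly feasible point to invoke Slater's condition (hence strong duality), and then shows that every feasible solution $(\hat U_1,\hat U_2,\hat V,\hat G,\hat\lambda,\hat y)$ of that dual can be lifted via $I_r\otimes P$ to a feasible solution of the dual of $\overline\LMI(X,Z)$ with the same objective. Your approach instead stays in the \emph{primal}: you compress a feasible $(\delta,\mathbf H)$ of the auxiliary problem to $(\delta,\mathbf P^T\mathbf H\mathbf P)$ and verify the three constraints of $\LMI(\hat X,\hat Z)$ directly, using exactly the identities $\mathbf e=\mathbf P\hat{\mathbf e}$, $\mathbf X(I_r\otimes P)=\mathbf P\hat{\mathbf X}$, $\mathbf P^T\mathbf P=I_{d^2}$ and the congruence $(I_r\otimes P)^T\mathbf S(I_r\otimes P)\succeq 0$. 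Your method is shorter and more symmetric with Lemma~\ref{lem:ineq1} (extension $\leftrightarrow$ restriction), and it avoids the Slater/strong-duality machinery entirely; the paper's dual argument is heavier but makes the structure of the feasible set explicit, which can be useful if one later needs quantitative control on the dual variables. One minor remark: in your first-constraint step you obtain only $\norm{\hat{\mathbf X}^T\hat{\mathbf H}\hat{\mathbf e}}\le\norm{\mathbf X^T\mathbf H\mathbf e}$ rather than equality (since $\mathbf X^T\mathbf H\mathbf e=\vect((W+W^T)X)$ need not lie in the range of $I_r\otimes P$), but inequality is all that is needed here.
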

\begin{proof}
The dual problem of the optimization problem defining $\LMI(\hat X,\hat Z)$ can be expressed as
\begin{equation}\label{eq:lmidual}
\begin{aligned}
\max_{\hat U_1,\hat U_2,\hat V,\hat G,\hat\lambda,\hat y} \quad & \tr(\hat U_1-\hat U_2)-\tr(\hat G)-a^2\hat\lambda-\langle\hat{\mathbf X}^T\hat{\mathbf X},\hat V\rangle-b\tr(\hat V) \\
\st \quad & \tr(\hat U_1+\hat U_2)+\langle\hat{\mathbf X}^T\hat{\mathbf X},\hat V\rangle=1, \\
&\left(\hat{\mathbf X}\hat y-\sum_{j=1}^r\vect(\hat V_{j,j})\right)\hat{\mathbf e}^T+\hat{\mathbf e}\left(\hat{\mathbf X}\hat y-\sum_{j=1}^r\vect(\hat V_{j,j})\right)^T=\hat U_1-\hat U_2, \\
& \begin{bmatrix}
\hat G & -\hat y \\
-\hat y^T & \hat \lambda
\end{bmatrix} \succeq 0, \\
&\hat U_1 \succeq 0, \quad \hat U_2 \succeq 0, \quad \hat V=\begin{bmatrix}
\hat V_{1,1} & \cdots & \hat V_{r,1} \\
\vdots & \ddots & \vdots \\
\hat V_{r,1}^T & \cdots & \hat V_{r,r}
\end{bmatrix} \succeq 0.
\end{aligned}
\end{equation}
Since
\[
\hat U_1=\frac{1-\mu\norm{\hat{\mathbf X}}^2}{2d^2}I_{d^2}-\frac{\mu r}{2}M, \quad \hat U_2=\frac{1-\mu\norm{\hat{\mathbf X}}^2}{2d^2}I_{d^2}+\frac{\mu r}{2}M, \quad \hat V=\mu I_{dr}, \quad \hat G=I_{dr}, \quad \hat\lambda=1, \quad \hat y=0,
\]
where
\[
M=\vect(I_d)\hat{\mathbf e}^T+\hat{\mathbf e}\vect(I_d)^T,
\]
is a strict feasible solution to the above dual problem \eqref{eq:lmidual} as long as $\mu>0$ is sufficiently small, Slater's condition implies that strong duality holds for the optimization problem defining $\LMI(\hat X,\hat Z)$. Therefore, we only need to prove that the optimal value of \eqref{eq:lmidual} is smaller than or equal to the optimal value of the dual of the optimization problem defining $\overline\LMI(X,Z)$ given by:
\begin{equation}\label{eq:lmidual2}
\begin{aligned}
\max_{U_1,U_2,V,G,\lambda,y} \quad & \tr(U_1-U_2)-\tr(G)-a^2\lambda-\langle\mathbf X^T\mathbf X,\mathbf V\rangle-b\tr(V) \\
\st \quad & \tr(U_1+U_2)+\langle\mathbf X^T\mathbf X,\mathbf V\rangle=1, \\
&\left(\mathbf Xy-\sum_{j=1}^r\vect(V_{j,j})\right)\mathbf e^T+\mathbf e\left(\mathbf Xy-\sum_{j=1}^r\vect(V_{j,j})\right)^T=\mathbf P(U_1-U_2)\mathbf P^T, \\
& \begin{bmatrix}
G & -y \\
-y^T & \lambda
\end{bmatrix} \succeq 0, \\
&U_1 \succeq 0, \quad U_2 \succeq 0, \quad V=\begin{bmatrix}
V_{1,1} & \cdots & V_{r,1} \\
\vdots & \ddots & \vdots \\
V_{r,1}^T & \cdots & V_{r,r}
\end{bmatrix} \succeq 0.
\end{aligned}
\end{equation}
The above claim can be verified by noting that given any feasible solution
\[
(\hat U_1,\hat U_2,\hat V,\hat G,\hat\lambda,\hat y)
\]
to \eqref{eq:lmidual}, the matrices
\begin{gather*}
U_1=\hat U_1, \quad U_2=\hat U_2, \quad V=(I_r \otimes P)\hat V(I_r \otimes P)^T, \\
\begin{bmatrix}
G & -y \\
-y^T & \lambda
\end{bmatrix}=\begin{bmatrix}
I_r \otimes P & 0 \\
0 & 1
\end{bmatrix}\begin{bmatrix}
\hat G & -\hat y \\
-\hat y^T & \hat\lambda
\end{bmatrix}\begin{bmatrix}
(I_r \otimes P)^T & 0 \\
0 & 1
\end{bmatrix}
\end{gather*}
form a feasible solution to \eqref{eq:lmidual2}, and both solutions have the same optimal value.
\end{proof}
\end{document}